\newcommand{\C}{\mathbb{C}}
\newcommand{\Z}{\mathbb{Z}}
\newcommand{\s}{\sigma}
\newcommand{\OO}{\mathcal{O}}
\newcommand{\DD}{ D\hspace{-.2em}D}
\def\relOmega#1{ \Omega_{X/Z}^#1}
\newtheorem{theorem}{Theorem}[section]
\newtheorem{lemma}[theorem]{Lemma}
\newtheorem{remark}[theorem]{Remark}
\newtheorem{corollary}[theorem]{Corollary}
\newtheorem*{conjecture}{Conjecture}
\newtheorem{definition}[theorem]{Definition}
\newtheorem{proposition}[theorem]{Proposition}
\newtheorem*{assumption}{Assumption}
\newtheorem{example}[theorem]{Example}
\title{Hypergeometric Discriminants}
\author{Saiei-Jaeyeong Matsubara-Heo}
\date{}
\begin{document}

\maketitle

\begin{abstract}
Given a family of varieties, the Euler discriminant locus distinguishes points where  Euler characteristic differs from its generic value.
We introduce a hypergeometric system associated with a flat family of very affine locally complete intersection varieties.
It is proven that the Euler discriminant locus is its singular locus and is purely one-codimensional unless it is empty.
Of particular interest is a family of very affine hypersurfaces.
We coin the term hypergeometric discriminant for the characteristic cycle of the hypergeometric system and establish a formula in terms of likelihood equations.

\end{abstract}

\section{Introduction}

The {\it Euler integral}, an integral of a product of powers of polynomials, provides a way to define hypergeometric functions. This perspective, originating from Aomoto's early work \cite{aomoto1975equations}, led to the study of twisted homology and cohomology \cite{cho1995intersection,goto2023intersection,mimachi2004intersection}. 
Independently, Gelfand, Graev, Kapranov, Retakh, and Zelevinsky introduced the GKZ system \cite{gel1992general,gel1989hypergeometric}, a $D$-module describing a wide class of hypergeometric systems.
A key feature of the GKZ system is its connection to toric ideals \cite{Matusevich-Miller-Walther,saito2013grobner}. 
Solutions to a regular holonomic GKZ system are given by Euler integrals \cite{GKZ}.
This allows for the integration of twisted homology/cohomology studies with the GKZ framework \cite{goto2022homology,matsubara2020euler}.
The study of the singular locus of the GKZ system also led to the development of $A$-discriminants and principal $A$-determinants \cite{GKZbook}. 
Esterov \cite[Theorem 1.7, Theorem 1.16]{esterov2013discriminant} later revisited the principal $A$-determinant  by considering a family of complete intersection varieties $\pi:X=\{ (x,z)\in (\C^*)^n\times Z\mid f_1(x;z)=\cdots=f_\ell(x;z)=0\}\to Z$, where $f_i(x;z)=\sum_{a\in A_i}z_{ia}x^a$ and $Z$ is an Affine space.
When $Z=\prod_{i=1}^\ell \C^{A_i}$, he proved that the vanishing locus of the principal $A$-determinant is precisely where the Euler characteristic of the fiber $\pi^{-1}(z)$ differs from its generic value, defining the {\it Euler discriminant locus}.
Note that this was re-discovered in \cite{amendola2019maximum}.
While Esterov's result extends to a more general parameter space Z under a specific non-degeneracy condition (\cite[Definition 1.18]{esterov2013discriminant}), we do not impose this assumption here.
Our motivation stems from the observation that this non-degeneracy condition is rarely met in physics applications, particularly in the study of Feynman integrals in quantum field theory (\cite{fevola2024landau,fevola2024principal}).

The Euler integral appears as the Lee-Pomeransky representation of a Feynman integral \cite{lee2013critical}, \cite[Section 2.5.4]{weinzierl2022feynman}. 
In this setting, we consider the case where $\ell=1$, and $f(x;z)=f_1(x;z)$ is the graph polynomial associated with a Feynman graph. 
The variables $x$ are the integration variables, known as {\it Schwinger parameters}, while $z=(z_{1a})_{a\in A_1}\in Z\subset\C^{A_1}$ represent the {\it kinematic data}.
Here, $Z$ is typically an affine subspace of $\C^{A_1}$. The Euler integral in this context takes the following form:
\begin{equation}\label{eq:Euler Integral intro}
\int_\Gamma f(x;z)^{-\nu_0}x_1^{\nu_1}\cdots x_n^{\nu_n}\frac{dx_1\cdots dx_n}{x_1\cdots x_n}.
\end{equation}
The study of singularities of a Feynman integral is known as {\it Landau analysis}, and the singular locus is called the {\it Landau variety}. 
Within the framework of Landau analysis, the Feynman integral was principally studied using momentum space representations \cite{landau1960analytic} \cite[Chapter I]{PhamAIHP}.
Recent approaches, however, have increasingly utilized the Euler integral representation \cite{helmer2024landau,mizera2022landau}. 
A geometric way to define the Landau variety is through the concept of the {\it bifurcation locus} of the projection $\pi:X\to Z$, which is the locus where $\pi$ fails to be a fiber bundle.
While classically studied in momentum-space representation, recent approaches also utilize Euler integrals \cite{helmer2024landau,mizera2022landau}.
In fact, computing the bifurcation locus is a long-standing problem in geometry (\cite{JelonekKurdyka,jelonek2017detecting,parusinski1995bifurcation}). 
Instead, one often computes the union of codimension one Whitney strata of a Whitney stratification (\cite[Definition 54]{brown2009periods}, \cite[Definition 1]{helmer2024landau}).
On the other hand, one can also define Landau singularity as the Euler discriminant locus, and the analogy to the principal $A$-determinant is considered in \cite{fevola2024landau,fevola2024principal}.
Note that the Euler discriminant locus is always a subset of the bifurcation locus, but it may in general be a proper subset.
See \cite{joicta2018bifurcation} for a complete description in the case of a polynomial map $\C^{n+1}\to\C^n$.

In the aforementioned literature, Landau singularity is described as either the bifurcation locus, the union of codimension one Whitney strata, or the Euler discriminant locus. However, the Landau singularity should be defined as the locus where the Euler integral \eqref{eq:Euler Integral intro} develops singularities. 
We expect that by introducing a holonomic system to which the Euler integral is subject, its singular locus will provide the correct definition of Landau singularity. Within the framework of the GKZ system (i.e., when $Z=\C^{A_1}$), it has been rigorously proven that the singular locus of the GKZ system is precisely the vanishing locus of the principal $A$-determinant, which is also the Euler discriminant locus. 
Therefore, the first task is to introduce a $D$-module whose singular locus coincides with the Euler discriminant locus. More generally, given a family $\pi:X\to Z$ of locally complete intersection very affine varieties satisfying additional assumptions as detailed in Section \ref{sec:2}, we associate a holonomic $D_Z$-module $M^{\rm hyp}_\pi(\nu)$. 
The following theorem, which will be made more precise in Theorem \ref{thm:ED=Sing2} and Theorem \ref{thm:3.2}, establishes a key connection:
\begin{theorem}
 The singular locus of $M^{\rm hyp}_\pi(\nu)$ is independent of a generic choice of $\nu$ and coincides with the Euler discriminant locus. 
 Furthermore, the Euler discriminant locus is purely one codimensional, meaning that all its irreducible components have codimension one. 
 Moreover, when the family is associated with a single Laurent polynomial $f(x;z)$, this locus is precisely where an Euler integral \eqref{eq:Euler Integral intro} develops singularities for some integration contour $\Gamma$.
\end{theorem}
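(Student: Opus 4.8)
The plan is to read off the statement from the two facts already in hand: by Theorem~\ref{thm:ED=Sing2} and Theorem~\ref{thm:3.2} the Euler discriminant locus $\Sigma$ equals the singular locus of the regular holonomic $D_Z$-module $M^{\rm hyp}_\pi(\nu)$ for generic $\nu$, and, when $\ell=1$, the Euler integrals \eqref{eq:Euler Integral intro} span the solution space of $M^{\rm hyp}_\pi(\nu)$ as the contour $\Gamma$ varies. For the second fact I would invoke the comparison with twisted (co)homology: on the very affine fiber $\pi^{-1}(z)$ the integrand $\Phi_z=f(x;z)^{-\nu_0}x_1^{\nu_1}\cdots x_n^{\nu_n}$ defines a rank-one local system $\mathcal{L}_z$; for generic $\nu$, under the Assumptions of Section~\ref{sec:2}, the twisted cohomology is concentrated in degree $n$ with $\dim H^n=(-1)^n\chi(\pi^{-1}(z))=:r$, this $r$ equals the holonomic rank of $M^{\rm hyp}_\pi(\nu)$, and the period pairing with the twisted locally finite homology $H_n^{\rm lf}(\pi^{-1}(z),\mathcal{L}_z^\vee)$ is perfect (\cite{aomoto1975equations,matsubara2020euler,goto2022homology}). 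Hence, fixing a base point $z_0\notin\Sigma$ and a basis $\Gamma_1,\dots,\Gamma_r$ of twisted cycles, the integrals $I_i(z)=\int_{\Gamma_i(z)}\Phi_z\,\tfrac{dx_1\cdots dx_n}{x_1\cdots x_n}$ obtained by parallel transport over $Z\setminus\Sigma$ form a basis of local solutions, and every Euler integral is a $\C$-linear combination of analytic continuations of the $I_i$.

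The inclusion of the singularity set of the Euler integrals into $\Sigma$ is then immediate: over $Z\setminus\Sigma$ the module $M^{\rm hyp}_\pi(\nu)$ is an integrable connection, so each $I_\Gamma$ is multivalued holomorphic there and $\operatorname{Sing}(I_\Gamma)\subseteq\Sigma$. For the reverse inclusion I would argue one irreducible component $C$ of $\Sigma$ at a time; by the purity part of the theorem $\operatorname{codim}C=1$. Choose a generic smooth point $z^*\in C$ and a small holomorphic disk $\Delta\hookrightarrow Z$ meeting $\Sigma$ transversally and only at $z^*$. Then $M^{\rm hyp}_\pi(\nu)|_\Delta$ is a regular holonomic $D_\Delta$-module which is singular at $z^*$, hence not $\OO_\Delta$-coherent near $z^*$; consequently not all of its solutions extend holomorphically over $z^*$, so some solution $s$ has a genuine singularity (branch point or pole) at $z^*$. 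Writing $s=\sum_i c_i\,I_i|_\Delta$ and using that a finite combination of functions holomorphic at $z^*$ is again holomorphic at $z^*$, at least one $I_i|_\Delta$ is singular at $z^*$. Letting $z^*$ range over the dense smooth locus of $C$ --- along which the monodromy representation and the local model of $M^{\rm hyp}_\pi(\nu)$ transverse to $C$ are locally constant --- we get that $\bigcup_{i=1}^r\operatorname{Sing}(I_i)$ contains a dense subset of $C$; since this union is closed and contained in $\Sigma$, it contains $C$. Running over all components gives $\bigcup_i\operatorname{Sing}(I_i)=\Sigma$, so $\Sigma$ is precisely the locus at which some Euler integral \eqref{eq:Euler Integral intro} develops a singularity.

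The main obstacle is the implication ``$z^*\in\operatorname{Sing}(M^{\rm hyp}_\pi(\nu))$ $\Rightarrow$ some honest solution, hence some Euler integral, is genuinely singular at $z^*$'': one must exclude the spurious scenario in which $M^{\rm hyp}_\pi(\nu)|_\Delta$ is formally singular but all of its multivalued solutions happen to be single-valued and holomorphic across $z^*$. This is where regular holonomicity and the genericity of $\nu$ enter decisively: genericity of $\nu$ guarantees that the twisted (co)homology of the fibers does not drop rank and that the period pairing stays perfect, so the solution sheaf has full rank $r$ over $Z\setminus\Sigma$ and its restriction to $\Delta^{*}$ does not extend across $z^*$, and regular holonomicity then forces a genuine regular singularity of some solution. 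A second, more routine technical point is the constancy of $\operatorname{Sing}(I_\Gamma)$ along the smooth locus of each component of $\Sigma$, which follows from the local triviality of $\pi$ away from the bifurcation locus and the resulting local constancy of the attached variation; this is what upgrades ``singular at a generic point of $C$'' to ``singular along all of $C$''.
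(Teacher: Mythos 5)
There is a genuine gap in the step you yourself flag as the crux. From ``$M^{\rm hyp}_\pi(\nu)|_\Delta$ is singular at $z^*$, hence not $\mathcal{O}_\Delta$-coherent near $z^*$'' you conclude that ``not all of its solutions extend holomorphically over $z^*$.'' This implication is false for a general regular holonomic module: take $\mathcal{O}_\Delta\oplus\mathcal{B}_{\{0\}|\Delta}$ (or $D/D(z\partial-n)$ with $n\in\Z_{\geq 0}$); these are regular holonomic, their characteristic varieties contain $T^*_0\Delta$, so $0$ lies in the singular locus, yet every holomorphic solution extends across $0$. So being a non-connection at $z^*$ does not by itself produce a singular solution, and your closing paragraph does not supply the missing mechanism: you assert that ``genericity of $\nu$ guarantees that the twisted (co)homology of the fibers does not drop rank,'' but the rank drops precisely on the Euler discriminant locus --- that is its definition --- and what is actually needed is the opposite use of this drop, namely an \emph{upper bound} on the space of holomorphic solution germs at $z^*$ by the (smaller) fiber invariant. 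This is exactly what the paper's Propositions \ref{prop:singular solution} and \ref{prop:singular set} provide: under the hypothesis $L^{-1}\iota_{z}^*M=0$ (which for $M^{\rm hyp}_\pi(\nu)$ comes from base change and middle-degree concentration of the fiberwise twisted cohomology for generic $\nu$) and regularity, one has $\mathrm{Hom}_{\C}(\iota_{z^*}^*M,\C)\simeq \mathrm{Hom}_{D}(M,\widehat{\mathcal{O}}_{z^*})=\mathrm{Hom}_{D}(M,\mathcal{O}_{z^*})$, so $\dim \mathrm{Hom}_{D}(M,\mathcal{O}_{z^*})=\chi_{z^*}(M)<\chi^*$, i.e.\ strictly fewer than $r$ independent germs extend over $z^*$, whence some Euler integral is genuinely singular there. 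Without the vanishing $L^{-1}\iota_{z}^*M=0$ and the regularity comparison of formal and convergent solutions, your disk argument cannot rule out the ``spurious'' scenario you mention (a local-cohomology-type summand that inflates the characteristic variety without affecting solutions).

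Two smaller points. First, you invoke ``the purity part of the theorem'' to get $\operatorname{codim}C=1$; that is part of the statement being proved, so you must either cite Theorem \ref{thm:pure codimensionality} (whose proof via the local cohomology vanishing \eqref{eq:vanishing of local cohomology} and Lemma \ref{lem:pure codimensionality} you do not reproduce) or prove purity yourself --- as written this portion of the claim is simply assumed. Second, your identification of solutions with Euler integrals via the perfect period pairing is essentially the paper's device, except the paper passes to the cyclic submodule $N^{\rm hyp}(\nu)$ generated by $\bigl[\frac{dx_1}{x_1}\wedge\cdots\wedge\frac{dx_n}{x_n}\bigr]$ (with ${\rm Sing}(N^{\rm hyp}(\nu))={\rm Sing}(M^{\rm hyp}(\nu))$) so that each solution germ is literally an integral of the fixed form \eqref{eq:Euler integral section 3}; for the full module $M^{\rm hyp}(\nu)$ your spanning claim needs this reduction to be made precise. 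The remaining parts of your argument (Euler integrals are nonsingular off $\Sigma$, the transverse-disk reduction, spreading out from a generic point of a component) are fine and parallel the paper's use of Cauchy--Kowalewsky--Kashiwara and Hartogs in Proposition \ref{prop:singular set}.
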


\noindent
Thus, we argue that the Landau variety should be the Euler discriminant locus. 
An algorithm for computing the Euler discriminant locus has been recently established in \cite{telen2024euler}.

By definition, the {\it Euler discriminant} is an effective divisor \cite[Definition 3.1]{esterov2013discriminant}, which quantifies the drop of the Euler characteristic on each irreducible component of the Euler discriminant locus.
A more refined understanding is provided by the characteristic cycle of $M^{\rm hyp}_\pi(\nu)$, which we term the {\it hypergeometric discriminant}.
The hypergeometric discriminant is also the characteristic cycle corresponding to the constructible function $z\mapsto \chi_z$ (\cite[Section 3]{KennedyCSMclass}).
This is a $(\dim Z)$-dimensional positive algebraic cycle $\sum_{Y}m_YT^*_YZ$ of the cotangent bundle $T^*Z$, where $T^*_YZ$ represents the conormal bundle of a closed subvariety $Y\subset Z$. 
Kashiwara's index theorem (\cite[Theorem~4.3.25]{dimca2004sheaves}, \cite{kashiwara1973index}) implies that the signed Euler characteristic $\chi_z=(-1)^{\dim X_z}\chi(X_z)$ of a fiber $X_z=\pi^{-1}(z)$ is given by the formula $\chi_z=\sum_{Y}(-1)^{{\rm codim} Y}m_YEu_Y(z)$, where $Eu_Y$ is the Euler obstruction of $Y$.
We note that higher codimensional subvarieties $Y\subset Z$ appear in the hypergeometric discriminant.

In principle, the hypergeometric system $M^{\rm hyp}_\pi(\nu)$ arises as a direct image $D$-module of an integrable connection. 
Thus, one can employ general formulas for the characteristic cycles to compute the hypergeometric discriminant such as \cite[Theorem 3.2 and Theorem 9.2]{ginsburg1986characteristic}.
However, a drawback is that one has to choose a relative compactification of the family $\pi:X\to Z$, which is extrinsic data.
In \cite{helmer2025geometric}, the authors provide an algorithm of computing the hypergeometric discriminant employing a particular compactification.
In this paper, we focus more on interpreting the signed Euler characteristic in terms of the {\it maximal likelihood degree} (\cite{catanese2006maximum,huh2013maximum}) to establish an intrinsic formula of the hypergeometric discriminant.
The total space $X$ of the family $\pi:X\to Z$ is a closed subvariety of $T^n\times Z$, where $T^n$ is an $n$-dimensional algebraic torus over $\C$. 
We denote the $i$-th coordinate projection by $x_i:T^n\to\C$. The maximal likelihood degree of a very affine variety is the number of critical points of the log-likelihood function $\sum_{i=1}^n\nu_i\log x_i$ on $X_z$ for generic $\nu_i\in\C$.
When $X_z$ is smooth, the maximal likelihood degree equals the signed Euler characteristic (\cite[Theorem 1.3]{FraneckiKapranov}, \cite[Theorem 1]{huh2013maximum}). 
We denote the fiber product with respect to a given embedding $X\hookrightarrow T^n\times Z$ and the canonical projection ${\rm id}_{T^n}\times\pi_{T^*Z}:T^n\times T^*Z\to T^n\times Z$ by $X\times_{T^n\times Z}(T^n\times T^*Z)$.
To analyze the family and understand the variation of Euler characteristics, it is crucial to consider a family of likelihood equations, which is described by the vanishing locus of the following one-form on $X\times_{T^n\times Z}(T^n\times T^*Z)$:
\begin{equation}\label{eq:the one form}
-\omega_0+\sum_{i=1}^n\nu_id\log x_i.
\end{equation}
Here, $\omega_0$ is the tautological one-form on $T^*Z$, and $d$ is the exterior derivative on $T^n\times T^*Z$. The vanishing locus of the one-form \eqref{eq:the one form} corresponds to an ideal $J_0$ of $\mathcal{O}_{T^n\times T^*Z}$, the structure sheaf of $T^n\times T^*Z$. 
Remarkably, the non-proper direct image sheaf $M_0^{\rm hyp}=pr_*(\mathcal{O}_{T^n\times T^*Z}/J_0)$ is a coherent $\mathcal{O}_{T^*Z}$-module and computes the characteristic cycle. 
Here, $pr:T^n\times T^*Z\to T^*Z$ is the natural projection. 
The natural $\C^*$ action on $T^*Z$ by fiberwise scaling induces a gradation on the structure sheaf $\mathcal{O}_{T^*Z}$, which in turn induces an associated filtration. 
As in the case of $D$-modules, one can associate $M_0^{\rm hyp}$ with its {characteristic cycle} $CC(M_0^{\rm hyp})$ as an algebraic cycle on $T^*Z$, which does not depend on the choice of a good filtration.
By definition, computing $CC(M_0^{\rm hyp})$ does not bypass any non-commutative algebra.
The main result of this paper is the following theorem, established for a family of very affine hypersurfaces $\pi:X\to Z$.
\begin{theorem}\label{thm:theorem 2 intro}
The characteristic cycle $CC(M_0^{\rm hyp})$ is the hypergeometric discriminant.
\end{theorem}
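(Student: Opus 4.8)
Write $X=V(g)\subset T^n\times Z$ for the hypersurface family and $\mathcal{L}=(\mathcal{O}_X,\,d+\sum_i\nu_i\,d\log x_i)$, so that under the assumptions of Section~\ref{sec:2} and for generic $\nu$ the module $M^{\rm hyp}_\pi(\nu)=\mathcal{H}^0\pi_+\mathcal{L}$ is regular holonomic. The characteristic cycle of a holonomic $D_Z$-module is the support cycle of $\mathrm{gr}^F$ for any good filtration $F$, and likewise $CC(M_0^{\rm hyp})$ is the support cycle of $\mathrm{gr}^G$ for any filtration $G$ on the coherent $\mathcal{O}_{T^*Z}$-module $M_0^{\rm hyp}$ that is good for the fiberwise-scaling grading of $\mathcal{O}_{T^*Z}$. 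Hence it suffices to produce such $F$ and $G$ together with an isomorphism $\mathrm{gr}^F M^{\rm hyp}_\pi(\nu)\cong\mathrm{gr}^G M_0^{\rm hyp}$ of graded $\mathcal{O}_{T^*Z}$-modules.

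The bridge is a presentation that ``de-quantizes''. From the relative twisted de Rham complex of $X/Z$ recalled in Section~\ref{sec:2} one extracts a finite presentation $D_Z^{a}\xrightarrow{A}D_Z^{b}\to M^{\rm hyp}_\pi(\nu)\to 0$ whose entries are Gauss--Manin operators: since $\prod_i x_i^{\nu_i}$ does not depend on $z$, $\partial_{z_k}$ sends a class $\bigl[\prod_i x_i^{\nu_i}\alpha\bigr]$ to $\bigl[\prod_i x_i^{\nu_i}\partial_{z_k}\alpha\bigr]$, and the twisted Griffiths--Dwork reduction of the resulting order-two pole along $g=0$ produces exactly the functions $\partial_{z_k}g$, $\theta_i g:=x_i\partial_{x_i}g$ and the constants $\nu_i$ --- that is, the coefficients of the one-form $-\omega_0+\sum_i\nu_i\,d\log x_i$ on $X\times_{T^n\times Z}(T^n\times T^*Z)$. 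Writing each entry of $A$ with the $\mathcal{O}_Z$-coefficients to the left of the $\partial_{z_k}$ and replacing each $\partial_{z_k}$ by the symbol $\zeta_k$ yields a matrix $\overline{A}$ over $\mathcal{O}_{T^*Z}$, and the first step is to identify $\mathrm{coker}(\overline{A})$ with $M_0^{\rm hyp}=pr_*(\mathcal{O}_{T^n\times T^*Z}/J_0)$, which amounts to recognizing that $J_0$ is generated by the de-quantized Gauss--Manin relations together with the relative equations of $X$. Now equip $M^{\rm hyp}_\pi(\nu)$ with the good filtration $F$ induced from the order filtration of $D_Z$ through $A$, and $M_0^{\rm hyp}$ with the good filtration $G$ induced from the $\zeta$-degree through $\overline{A}$ --- not with the filtration obtained by naively pushing forward the $\zeta$-degree on $\mathcal{O}_{T^n\times T^*Z}/J_0$, whose associated graded $pr_*(\mathcal{O}/\mathrm{in}_\zeta J_0)$ need not even be coherent since $pr$ is not proper. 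Then $\mathrm{gr}^F M^{\rm hyp}_\pi(\nu)=\mathrm{coker}(\sigma(A))$ and $\mathrm{gr}^G M_0^{\rm hyp}=\mathrm{coker}(\sigma(\overline{A}))$, where $\sigma$ denotes principal symbols; but the principal symbol of a de-quantization is the de-quantization of the principal symbol, so $\sigma(\overline{A})=\sigma(A)$, the two associated gradeds coincide, and $CC(M^{\rm hyp}_\pi(\nu))=CC(M_0^{\rm hyp})$, which is the theorem.

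The hard part is the input that makes the above rigorous, and this is where the assumptions of Section~\ref{sec:2}, the genericity of $\nu$, and the previously established fact that the singular locus equals the Euler discriminant locus and is purely one-codimensional (Theorem~\ref{thm:ED=Sing2}) all enter. One must show, for generic $\nu$: that $V(J_0)$ --- the family of critical points of $\sum_i\nu_i\log x_i$ on the fibers $X_z$ --- is a reduced complete intersection of dimension $\dim Z$, and that $pr|_{V(J_0)}$ is finite onto a closed subvariety of $T^*Z$ (no critical point escapes to infinity within a very affine fiber), so that the ``likelihood cycle'' $pr_*[V(J_0)]$ is defined, $M_0^{\rm hyp}$ is coherent, and $CC(M_0^{\rm hyp})$ is the flat limit of $pr_*[V(J_0)]$ under fiberwise scaling; and that the relative twisted de Rham complex and its de-quantization are \emph{strict} resolutions, so that $\mathrm{gr}$ commutes with $\mathrm{coker}$ --- the same non-resonance and l.c.i.\ input that already yields $M^{\rm hyp}_\pi(\nu)=\mathcal{H}^0\pi_+\mathcal{L}$. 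The chief obstacle, then, is precisely the non-properness of $\pi$ forced by the very affine hypothesis: it is why one cannot simply push the zero-section Lagrangian cycle along the correspondence $T^*X\leftarrow X\times_Z T^*Z\to T^*Z$, and why pushing the symbol ideal along $pr$ naively fails --- both would drop exactly the conormal contributions $T^*_YZ$ attached to critical points ``at infinity,'' which are the Euler-discriminant components. It is the genericity of $\nu$ together with the complete-intersection geometry of the likelihood equations, and not any extrinsic relative compactification of $\pi$, that keeps these contributions finite and correctly counted; as a consistency check, the multiplicity of each $T^*_YZ$ can be read off at a generic point of $Y$, where $pr|_{V(J_0)}$ is finite and the computation is local, and it matches the corresponding multiplicity in the hypergeometric discriminant by Kashiwara's index theorem, while the zero-section multiplicities on both sides are the generic signed Euler characteristic, i.e.\ the maximal likelihood degree.
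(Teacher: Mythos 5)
Your overall picture (a commutative ``likelihood'' module whose support is proper over $T^*Z$, no critical points escaping to toric infinity, multiplicities matched component by component) is the right moral one, but the mechanism you propose does not close. The pivotal step --- equipping $M^{\rm hyp}_\pi(\nu)$ and $M^{\rm hyp}_0$ with filtrations induced through a chosen presentation $D_Z^a\xrightarrow{A}D_Z^b$ and its de-quantization $\overline{A}$, and then asserting $\mathrm{gr}^F M^{\rm hyp}_\pi(\nu)=\mathrm{coker}(\sigma(A))$ and $\mathrm{gr}^G M_0^{\rm hyp}=\mathrm{coker}(\sigma(\overline{A}))$ --- is false for an arbitrary presentation: the induced filtration has $\mathrm{gr}(M)=\mathcal{O}_{T^*Z}^b/\mathrm{gr}(\mathrm{im}\,A)$, and $\mathrm{gr}(\mathrm{im}\,A)$ contains, but in general strictly contains, $\mathrm{im}\,\sigma(A)$ unless the chosen generators form a Gr\"obner/involutive basis for the order filtration. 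You acknowledge this by invoking ``strictness'' of the twisted de Rham complex and of its de-quantization, but you offer no argument and attribute it to ``the same non-resonance and l.c.i.\ input'' that yields concentration in degree zero --- which is a different statement. Establishing exactly this compatibility is the actual content of the paper's proof: it is done there by the $\hbar$-deformation, namely the isomorphism $M^{\rm hyp}_{[\![\hbar]\!]}\simeq H^n((X/Z)_{K[\![\hbar]\!]},\omega)$ (Theorem \ref{thm:hbar isomorphism}), the absence of $\hbar$-torsion proved via a Bertini/regular-sequence argument (Lemma \ref{lem:no torsion}, Corollary \ref{cor:no torsion}), coherence of $M^{\rm hyp}_{[\![\hbar]\!]}$ over $D_{[\![\hbar]\!]}$ via the residue argument at toric infinity (Theorem \ref{thm:3.7}), the choice of good filtrations satisfying \eqref{eq:intersection} so that $\mathrm{gr}(M^{\rm hyp}_{[\![\hbar]\!]})$ is itself $\hbar$-torsion free, the Nakayama-type support Lemma \ref{lem:identification}, and a composition-series comparison of multiplicities. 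Your sketch implicitly assumes all of this.

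Two further points would need repair even granting strictness. First, the identification $\mathrm{coker}(\overline{A})\cong pr_*(\mathcal{O}_{T^{n+1}\times T^*Z}/J_0)$ is asserted as a ``recognition,'' but eliminating the torus variables at the $D$-module level and then de-quantizing need not agree with de-quantizing first and then pushing forward along the non-proper map $pr$; that such identifications are delicate is visible in \S\ref{sec:Landau analysis}, where the cyclic submodule $\mathcal{O}_{T^*Z}/(J_0\cap\mathcal{O}_{T^*Z})$ of $M^{\rm hyp}_0$ can have strictly smaller multiplicities than $M^{\rm hyp}_0$ itself. Second, your de-quantization keeps $\nu$ at order zero, whereas in the paper the passage from $M^{\rm hyp}$ to the $\hbar$-world requires the rescaling $\nu\mapsto\nu/\hbar$ of \eqref{eq:scaling}; the parameters enter the classical limit on the same footing as the symbols $\zeta$ (this is why the likelihood equations, in which $\nu$ appears, survive at $\hbar=0$), and a symbol comparison that treats $\nu$ as a constant of order zero does not by itself account for the $\nu$-dependent lower-order terms that feed into the graded module. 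Finally, the concluding ``consistency check'' via Kashiwara's index theorem at generic points of each $Y$ is a plausibility argument, not a proof of the multiplicity equalities, which the paper obtains by the composition-series argument after the support identification.
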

A key technical tool employed to prove the main theorem is the use of $\hbar$-deformation \cite{aoki2004exact,kuwagaki2022hbar,polesello2004stacks}. 
Namely, we introduce a ring $D_{[\![\hbar]\!]}$ of differential operators with a formal parameter $\hbar$ and define an $\hbar$-deformation $M^{\rm hyp}_{[\![\hbar]\!]}$ of the hypergeometric system.
It is rather easily proved that the characteristic cycle of $M^{\rm hyp}_{[\![\hbar]\!]}$ is same as the hypergeometric discriminant.
On the other hand, the transition from the non-commutative module $M^{\rm hyp}_{[\![\hbar]\!]}$ to the commutative one $M^{\rm hyp}_{0}$ is achieved by taking the limit $\hbar\to 0$. 
The main theorem also provides a way to compute the Euler discriminant (locus). 
We also relate hypergeometric discriminants to a description of the Euler discriminant locus recently established by Telen and Wiesmann \cite{telen2024euler} and address some of the questions raised in their paper.
{In fact, computing the Euler stratification is equivalent to computing the hypergeometric discriminants and Euler obstructions.}

The application of the Euler discriminant to the study of Landau singularities is still in its early stages. Although the Euler discriminant locus can be computed using a method from \cite{telen2024euler} or by Theorem \ref{thm:theorem 2 intro}, we currently lack a method to isolate its individual irreducible components.
This likely presents a significant computational drawback. 
The approach used in \cite{fevola2024principal} may offer insights into this challenge. 
Furthermore, the precise relationship between the Euler discriminant locus and the bifurcation locus remains unclear though the latter always contains the former.
Landau analysis is also studied from the viewpoint of microlocal analysis of a holonomic system (e.g., \cite{KashiwaraKawaiFeynman}), where a momentum-space representation is often employed. 
The precise relationship between their holonomic system and our hypergeometric system remains unclear. 
Addressing these open questions will constitute important directions for future research.




\noindent
{\bf Structure of the paper}
In the beginning of \S\ref{sec:2}, we set up notations and assumptions.
In \S\ref{subsec:Mellin}, we introduce a hypergeometric system associated to a family of locally complete intersection very affine varieties $\pi:X\to Z$ via Mellin transform defined in \cite{loeser1991equations}.
In \S\ref{subsec:ED and SL}, we show that Euler discriminant locus is identical with the singular locus of the hypergeometric system under our assumption.
In \S\ref{subsec:Pure codimensionality}, we prove that the Euler discriminant locus is purely one-codimensional.
In \S\ref{subsec:SL and singularity}, we show that the Euler discriminant is the locus where a solution to the hypergeometric system becomes singular.
From \S\ref{sec:Euler integral} on, we focus on a family of very affine hypersurfaces.
In \S\ref{subsec:3.1}, we relate this family with an Euler integral \eqref{eq:Euler Integral intro}.
In \S\ref{subsec:3.2}, we define several rings of differential and difference-differential operators in the presence of a small parameter $\hbar$ to fix notation in the following sections.
The ring $D_{[\![\hbar]\!]}$ is same as the ones discussed in \cite{aoki2004exact,kuwagaki2022hbar,polesello2004stacks} but different from the one discussed in \cite{d2011regular}.
In section \ref{subsec:likelihood formula}, we relate hypergeometric discriminants to likelihood equations (\cite{catanese2006maximum},\cite{huh2013maximum}) when the family $\pi:X\to Z$ is associated to a family of hypersurfaces.
In section \ref{subsec:geometric description}, we compare our description of the Euler discriminant with that of \cite{telen2024euler}.
We use the same notation as the previous section.
Theorem \ref{thm:main result} establishes a new description of the Euler discriminant.
In section \ref{subsec:GKZ case}, we revise the classical GKZ system from our point of view.
The idea of introducing a small parameter $\hbar$ to GKZ system has already appeared in the literature such as \cite{fuji2019reconstructing}.
In section \ref{sec:Landau analysis}, we discuss a family of polynomials $f(x;z)$ known as Lee-Pomeransky polynomial of a Feynman diagram.
Since the Euler discriminant of this family is precisely the singularity of the Euler integral, the section is entitled Landau analysis.
We indicate how to compute hypergeometric discriminants or Euler discriminants for several examples based on Theorem \ref{thm:main result} and conclude the paper with some observations.

\section{Euler discriminant and hypergeometric $D$-module}\label{sec:2}

In the following, $Z$ is a quasi-projective, smooth, and irreducible complex algebraic variety.
For a natural number $n$, we write $T^n$ for the $n$-dimensional algebraic torus ${\rm Spec}\, \C[x_1^{\pm1},\dots,x_n^{\pm1}]$.
We simply write $T$ for $T^n$ when there is no fear of confusion.
For any field extension $\C\subset k$, we write $Z_k$ for the base change of $Z$ by the corresponding morphism ${\rm Spec}\,k\to {\rm Spec}\, \C$ in the sense of scheme theory.
The symbol $K$ denotes the field $\C(\nu)$ of rational functions in $n$ parameters $\nu=(\nu_1,\dots,\nu_n)$.
Let $X$ be a complex algebraic variety and $\pi:X\to Z$ be a morphism of algebraic varieties.
For any $z\in Z$, the symbol $\chi_z$ denotes the absolute value of the Euler characteristic of the fiber $\pi^{-1}(z)$.
By a version of Thom-Mather's first isotopy lemma \cite[Corollaire 5.1]{verdier1976stratifications}, a function $\chi$ defined by $Z\ni z\mapsto \chi_z\in\Z$ is a constructible function.
In particular, $\chi$ takes a constant value on a non-empty Zariski open subset of $Z$.
The symbol $\chi^*$ denotes the value of the function $\chi$ on this open subset .

\begin{definition}\label{def:ED}
    {\it Euler discriminant locus} $\nabla_\chi^\pi(Z)$ of $\pi:X\to Z$ is a closed algebraic subvariety defined as the Zariski closure of the following set:
    \begin{equation}\label{eq:nabla Z}
    {\{ z\in Z\mid \chi_z<\chi^*\}}.    
    \end{equation}
\end{definition}

The definition above is the one that appeared in \cite[Definition 3.2]{fevola2024principal}.
When $\pi:X\to Z$ is a family of very affine varieties, the set \eqref{eq:nabla Z} is Zariski closed by \cite[Theorem 3.1]{telen2024euler}.
The term Euler discriminant first appeared in \cite[Definition 3.1]{esterov2013discriminant} and is, by definition, a Weil divisor, hence of codimension one.
On the other hand, Definition \ref{def:ED} does not imply that the Euler discriminant is always codimension one.

\begin{example}
    Let $\pi:{\rm Spec}\,\C[z_1,z_2]\to\C$ be a morphism defined by $\pi(z_1,z_2):=z_1^2+z_2^2$.
    Then, it is easily seen that $\chi_z=0$ for any $z\neq 0$ and $\chi_0=1$, hence $\nabla^\pi_\chi(Z)=\varnothing$.
\end{example}

In principle, the Euler discriminant locus should be defined as the locus where Euler characteristic drops off.
We impose the following assumptions throughout \S2.

\begin{assumption}
The variety $X$ is a closed, locally complete intersection subvariety of $Z\times T^n$  for some natural number $n$ through a closed embedding $\iota_X:X\to Z\times T^n$.
Any irreducible component of $X$ has a same dimension $\dim X$.
The symbol $\pi :X\to Z$ denotes the composition $\pi_Z\circ\iota_X$ where $\pi_Z:Z\times T^n\to Z$ denotes the natural projection.
The morphism $\pi:X\to Z$ factorizes as $X\overset{\pi}{\to} U\overset{\iota_U}{\to} Z$ where $U\subset Z$ is a non-empty Zariski open subset of $Z$ and $\iota_U$ is an embedding.
We assume that $\pi:X\to U$ is a flat morphism.
\end{assumption}

Under this assumption, it eventually follows that our definition of Euler discriminant locus is compatible with that of \cite[Definition 3.1]{esterov2013discriminant} as we will see in Theorem \ref{thm:pure codimensionality}.
For any $z\in Z$, the symbol $X_z$ denotes the fiber of $\pi$ over $z$.
By our assumption on the morphism $\pi:X\to Z$, the identity
\begin{equation}\label{eq:fiber dimensions}
\dim X_z=\dim X-\dim Z    
\end{equation}
holds for any $z\in Z$ unless $X_z=\varnothing$ and $X_z$ is regarded as a closed locally complete intersection subvariety of $T$.
It follows from \cite[Theorem 4.1]{budur2015signed} that 
\begin{equation}\label{eq:signed e.c.}
    \chi_z=(-1)^{\dim X_z}\chi(X_z)\geq 0.
\end{equation}
In the followings, we introduce a hypergeometric system and relate it to the study of $\nabla^\pi_\chi(Z)$ in this specific setup.
We use standard notation of derived functors for $D$-modules as in \cite{hotta2007d} and those for singularity theory as in \cite{dimca2004sheaves}.

\subsection{Partial Mellin transform and a hypergeometric system}\label{subsec:Mellin}
For a smooth algebraic variety $Y$, we write $D_Y$ for the sheaf of rings of differential operators on $Y$.
Let $R^{\rm pol}$ denotes the non-commutative ring generated by elements $\nu_j,\s_j^{\pm 1}$ $(j=1,\dots,n)$ over $\C$ with relations
\begin{equation}
    [\s_i,\nu_j]=\delta_{ij}\s_j,\ \ [\s_i^{-1},\nu_j]=-\delta_{ij}\s_j,\ \ [\s_i,\s_j]=0,\ \ [\nu_i,\nu_j]=0,
\end{equation}
where $\delta_{ij}$ denotes Kronecker's delta and $[a,b]:=ab-ba$ is the commutator bracket.
Let $\Gamma_T$ be the global section functor on $T$.
The ring $R^{\rm pol}$ is isomorphic to the ring of differential operators $\Gamma_TD_T$ on $T$ via the correspondence $\nu_i\leftrightarrow -x_i\partial_{x_i}$ and $\s_i^{\pm 1}\leftrightarrow x_i^{\pm 1}$.
We write $D_Z\boxtimes R^{\rm pol}$ for the tensor product $D_Z\otimes_{\C}R^{\rm pol}$, which naturally has a structure of a sheaf of rings whose product is induced from those of $D_Z$ and $R^{\rm pol}$ by requiring that any element of $D_Z$ commutes with any element of $R^{\rm pol}$.
The ring isomorphism $R^{\rm pol}\simeq\Gamma_TD_T$ naturally induces another ring isomorphism
$\mathfrak{M}_{Z\times T/Z}:D_Z\boxtimes R^{\rm pol}\to D_{Z\times T}=D_Z\boxtimes D_T$.
When it is obvious from the context, we simply write $\mathfrak{M}$ for $\mathfrak{M}_{Z\times T/Z}$.
Given a $D_{Z\times T}$-module $M$, we define a $D_Z\boxtimes R^{\rm pol}$-module $N=\mathfrak{M}^{\rm pol}(M)$ as follows: $N$ is same as $M$ as a sheaf of abelian groups and the action $\bullet$ of $D_Z\boxtimes R^{\rm pol}$ is given by $P\bullet m:= (\mathfrak{M}P)\cdot m\ \ \ (P\in D_Z\boxtimes R^{\rm pol},\ m\in N)$,
where $\cdot$ is the action of $D_{Z\times T}$ on $M$.
We write $R:=K\otimes_{\C[\nu_1,\dots,\nu_n]}R^{\rm pol}$, which also naturally carries a structure of a non-commutative ring.
We define $\DD := D_Z \boxtimes R$, which naturally carries the structure of a sheaf of $K$-algebras.
The partial Mellin transform of a $D_{Z\times T}$-module is a $\DD$-module $\mathfrak{M}(M):=K\otimes_{\C[\nu_1,\dots,\nu_n]}\mathfrak{M}^{\rm pol}(M)$.
For a $D_{Z\times T}$-module $M$, $M_K:=K\otimes_{\mathbb{C}}M$ naturally carries the structure of a $D_{Z_K\times T_K}$-module.
We set 
$$\relOmega{k}(M):=\pi_{Z*}\left(\bigoplus_{1\leq i_1<\cdots<i_k\leq n}M_Kdx_{i_1}\wedge\cdots\wedge dx_{i_k}\right),$$
where $\pi_Z:Z_K\times T_K\to Z_K$ is the base change of the projection $\pi_Z:Z\times T\to Z$ by abuse of notation.
Note that $\pi_Z$ is an affine morphism.
We consider a relative one-form
$\omega \, =\nu_1\frac{{\rm d}x_1}{x_1} + \cdots +\nu_n\frac{ {\rm d}x_n}{x_n}$.
The \emph{relative twisted differential} $\nabla_\omega: \relOmega{k}(M)\rightarrow \relOmega{{k+1}}(M)$ is defined by $\nabla_\omega(\phi) = ({\rm d}_x + \omega \wedge)\, \phi$.
This gives rise to a cochain complex
\begin{equation}\label{eq:de Rham complex}
(\relOmega{\bullet}(M), \nabla_\omega) : \, 0 \longrightarrow \relOmega{0}(M)\overset{\nabla_\omega}{\longrightarrow} \relOmega{1} (M)\overset{\nabla_\omega}{\longrightarrow} \cdots \overset{\nabla_\omega}{\longrightarrow} \relOmega{n}(M)\longrightarrow 0.
\end{equation}
The top cohomology sheaf of this complex $H^n(\relOmega{\bullet}(M),\nabla_\omega)$ naturally admits a left action of $\DD$.
In fact, the left action of $D_{Z_K}$ on $M_K$ induces that of $H^n(\relOmega{\bullet}(M),\nabla_\omega)$.
On the other hand, a shift operator $\sigma_i$ acts on $[\xi(\nu)]\in H^n(\relOmega{\bullet}(M),\nabla_\omega)$ via $
\sigma_i[\xi(\nu)]=[x_i\xi(\nu+e_i)]$, where $\xi(\nu)$ is a local section of $\relOmega{n}(M)$ and $e_i$ is the $i$-th unit vector.
This action induces that of $\DD$.
The following lemma is a straightforward generalization of \cite[1.2.1 (1)]{loeser1991equations}.

\begin{theorem}\label{thm:relative_TCLI}
Under the notation above, the followings are true:
\begin{itemize}
    \item[(1)]$\mathfrak{M}(M)\simeq H^n(\relOmega{\bullet}(M),\nabla_\omega)$ as $\DD$-modules.
    \item[(2)]$H^j(\relOmega{\bullet}(M),\nabla_\omega)=0$ for any $j\neq n$.
\end{itemize}
    
\end{theorem}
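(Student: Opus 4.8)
The plan is to reduce the statement to the corresponding assertion for a single torus factor, as in Loeser–Sabbah \cite{loeser1991equations}, by exploiting the fact that $\pi_Z\colon Z_K\times T_K\to Z_K$ is an affine morphism and that $T_K=T^n_K$ factors as an iterated product of one-dimensional tori. First I would set up the local statement: over an affine open $V\subset Z$, the complex $(\relOmega{\bullet}(M),\nabla_\omega)$ is the relative twisted de Rham complex of the $D_{V_K\times T_K}$-module $M_K|_{V_K\times T_K}$ along the torus fibers, and I want to identify its cohomology with the partial Mellin transform. The key algebraic input is that the Mellin transform $\mathfrak{M}$ converts the relative de Rham complex along $T=T^n$ into the Koszul-type complex computing $\mathfrak{M}^{\rm pol}(M)$: concretely, the operator $\nabla_\omega$ in the $i$-th direction corresponds, under $\mathfrak M$, to right multiplication by $x_i\partial_{x_i}+\nu_i$, which the isomorphism $R^{\rm pol}\simeq\Gamma_TD_T$ sends to a generator encoding the $\sigma_i$-shift relation. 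So the de Rham complex becomes, after Mellin transform, the Koszul complex of $M$ (viewed as a $\C[x_i\partial_{x_i}]$-module in each direction) with respect to the $n$ commuting operators $x_i\partial_{x_i}$, tensored up over $K$.

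The main steps, in order: (i) treat $n=1$, recovering \cite[1.2.1(1)]{loeser1991equations} essentially verbatim — here the de Rham complex is the two-term complex $M_K\xrightarrow{\;x\partial_x+\nu\;}M_K$, its Mellin transform is visibly $\mathfrak M(M)$ in degree $1$, and vanishing of $H^0$ (i.e. injectivity of $x\partial_x+\nu$ on $M_K$) follows because $\nu$ is a free variable adjoined in passing to $K$, so $x\partial_x+\nu$ acts invertibly on $M\otimes_\C\C[\nu]$ localized at nothing — more carefully, one argues that $\ker$ would be a $\C[\nu]$-torsion-free module on which $\nu$ acts as $-x\partial_x$, which is impossible over the field $K$ unless it is zero; (ii) induct on $n$ by writing $T^n=T^{n-1}\times T^1$ and using a Fubini/Leray argument for the pushforward along $\pi_Z$, together with Theorem part (2) applied in the $T^{n-1}$-relative situation to see that only the top cohomology survives and the spectral sequence of the double complex degenerates; (iii) assemble the $\DD$-module structure: the $D_{Z_K}$-action is transported directly, and the $\sigma_i$-action by $\sigma_i[\xi(\nu)]=[x_i\xi(\nu+e_i)]$ is exactly the image under $\mathfrak M$ of the tautological $x_i$-multiplication, so the identification of (1) is $\DD$-linear by construction; (iv) deduce (2) from the degeneration in (ii), or alternatively prove (2) first and directly — acyclicity off the top degree for an affine morphism applied to a coherent $D$-module complex.

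The step I expect to be the main obstacle is (ii): making the induction genuinely work requires knowing that $\relOmega{k}(M)$ for the full torus is computed by first taking relative de Rham cohomology along $T^{n-1}$ (which by the inductive hypothesis is concentrated in top degree and equals the partial Mellin transform there) and then along the remaining $T^1$. Interchanging the two de Rham differentials and the pushforward $\pi_{Z*}$ is where one must be careful, since $\pi_{Z*}$ is exact only because the morphism is affine — so I would phrase everything at the level of sheaves of modules over the affine base, use that $\pi_{Z*}$ of a complex of quasi-coherent sheaves computes hypercohomology with no higher terms, and invoke the inductive vanishing to collapse the associated double-complex spectral sequence. A secondary subtlety is checking that tensoring with $K$ over $\C[\nu_1,\dots,\nu_n]$ commutes with taking cohomology of the complex, which holds because $K$ is flat (indeed a localization followed by a field extension) over $\C[\nu]$; this is what lets one pass freely between $\mathfrak M^{\rm pol}$ and $\mathfrak M$ and is also the source of the vanishing in (2), since over the field $K$ the operators $x_i\partial_{x_i}+\nu_i$ have no kernel on any $K$-vector space carrying a compatible $\nu_i$-action.
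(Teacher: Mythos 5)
Your proposal is correct and follows essentially the route the paper intends: the paper gives no argument of its own but delegates to \cite[1.2.1(1)]{loeser1991equations}, and your reduction of the relative twisted de Rham complex to the Koszul complex of the commuting operators $x_i\partial_{x_i}+\nu_i$ acting on $K\otimes_{\C}M$ — identifying the top cohomology with $\mathfrak{M}(M)$ directly, proving the lower vanishing by induction on the torus factors via the leading-coefficient-in-$\nu_i$ injectivity, and using exactness of $\pi_{Z*}$ for the affine morphism together with flatness of the field extensions — is precisely the ``straightforward generalization'' being invoked. One slip of wording: $x_i\partial_{x_i}+\nu_i$ acts injectively, not invertibly, on the relevant modules (its cokernel is the Mellin transform, generally nonzero), but since only injectivity is needed for the vanishing in degrees below $n$ this does not affect your argument.
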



The following lemma follows from an argument analogous to the beginning of \cite[1.3.5]{loeser1991equations}.
\begin{lemma}\label{lem:relative_TCLI}
    For generic $\nu\in\C^{n}$, one has 
    $$
    H^j\int_{\pi_Z}Mx^\nu=0
    $$
    for any $j\neq 0$.
\end{lemma}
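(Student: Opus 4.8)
The plan is to compute $\int_{\pi_Z}Mx^\nu$ by a relative twisted de Rham complex, reduce the desired vanishing to its generic-parameter incarnation (which is Theorem~\ref{thm:relative_TCLI}(2)), and then spread that out to a nonempty Zariski open set of parameters $\nu\in\C^n$.

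Since $\pi_Z\colon Z\times T\to Z$ is an affine morphism, the $D$-module direct image $\int_{\pi_Z}N$ of any $D_{Z\times T}$-module $N$ is represented, with no higher $\pi_{Z*}$ terms, by the relative de Rham complex $\pi_{Z*}\bigl(\Omega^{\bullet}_{(Z\times T)/Z}\otimes_{\OO_{Z\times T}}N\bigr)$ (placed in degrees $-n,\dots,0$, with $n=\dim T$). Taking $N=Mx^\nu$ — that is, $M$ with the connection $\mathrm d+\omega\wedge$ for $\omega=\sum_{i=1}^n\nu_i\,\mathrm dx_i/x_i$ — converts the relative de Rham differential into the twisted differential $\nabla_\omega$ of \eqref{eq:de Rham complex}. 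Hence, up to the standard degree shift by $n$, one has a functorial identification of $H^{j}\int_{\pi_Z}Mx^\nu$ with the $(j+n)$-th cohomology of the complex $(\relOmega{\bullet}(M),\nabla_\omega)$ in which $M_K$ is replaced by the specialization of $M$ at $\nu$. Working over $K=\C(\nu)$ from the start, this complex is exactly \eqref{eq:de Rham complex}, so Theorem~\ref{thm:relative_TCLI}(2) yields $H^{j}\int_{\pi_Z}Mx^\nu=0$ for $j\neq 0$ over $K$.

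To pass from the generic point of $\mathbb A^n_\nu=\operatorname{Spec}\C[\nu]$ to closed points, I would consider the $\C[\nu]$-linear complex $C^{\bullet}$ of quasi-coherent sheaves on $Z\times\mathbb A^n_\nu$ obtained by the same recipe from $M\boxtimes\OO_{\mathbb A^n_\nu}$ and the tautological form $\sum\nu_i\,\mathrm dx_i/x_i$; its base change to the generic point of $\mathbb A^n_\nu$ is the complex \eqref{eq:de Rham complex}, and its specialization at a closed point $\nu$ computes $H^{\bullet}\int_{\pi_Z}Mx^\nu$. Because $\pi_Z$ is affine and $\OO_{\mathbb A^n_\nu}$ is $\C[\nu]$-flat, each term $C^{k}$ is $\C[\nu]$-flat; because $M$ is holonomic, affine direct images preserve coherence, so the cohomology sheaves $\mathcal H^{k}(C^{\bullet})$ are coherent. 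The generic vanishing of the previous paragraph says $\mathcal H^{k}(C^{\bullet})$ is a torsion $\C[\nu]$-module for $k\neq n$, hence — being coherent — is annihilated by a single nonzero $g\in\C[\nu]$ and supported over $V(g)$. Choosing $V\subseteq\mathbb A^n_\nu\setminus V(g)$ nonempty open and small enough that all $\mathcal H^{k}(C^{\bullet})$ are $\C[\nu]$-flat over $V$ (generic flatness), specialization commutes with cohomology over $V$, so for $\nu\in V$ we get $\mathcal H^{k}(C^{\bullet}|_\nu)=\mathcal H^{k}(C^{\bullet})|_\nu=0$ for $k\neq n$. Through the degree shift this is exactly $H^{j}\int_{\pi_Z}Mx^\nu=0$ for $j\neq 0$ and all $\nu$ in the nonempty Zariski open set $V\subseteq\C^n$.

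The dictionary ``affine direct image $=$ relative de Rham'' and the input from Theorem~\ref{thm:relative_TCLI}(2) are routine; the step I expect to require genuine care is the last one, where one must know that the cohomology of the relative twisted de Rham complex stays coherent and becomes flat over a nonempty open locus of the $\nu$-parameter space, so that specialization of $\nu$ commutes with cohomology. This is precisely where the finiteness (holonomicity) of $M$ is used, and it is the main obstacle to making the argument fully rigorous.
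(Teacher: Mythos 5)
Your first two steps (affine direct image computed by the relative twisted de Rham complex, and the vanishing over $K=\C(\nu)$ from Theorem~\ref{thm:relative_TCLI}(2)) are fine and agree with the framework the paper appeals to. The spreading-out step, however, has a genuine gap: the cohomology sheaves $\mathcal H^k(C^\bullet)$ are modules over $D_Z\otimes_\C\C[\nu]$, and they are in general \emph{not} coherent over this ring. ``Affine direct images preserve coherence'' is a statement about the full ring $D_{Z\times T}$ — in Mellin language, it requires keeping the invertible shift operators $\s_i^{\pm1}$ — and finiteness is lost over the subring in which $\nu$ acts only as a central parameter. Concretely, take $Z$ a point, $T=\C^*$, $M=\C[x^{\pm1}]$ (holonomic): then $C^\bullet$ is $\C[x^{\pm1},\nu]\xrightarrow{\,x\partial_x+\nu\,}\C[x^{\pm1},\nu]$, whose top cohomology is $\bigoplus_{k\in\Z}\C[\nu]/(\nu+k)$. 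This module is $\C[\nu]$-torsion but not finitely generated, is not annihilated by any single nonzero $g\in\C[\nu]$, and is not flat over any nonempty Zariski-open subset of the $\nu$-line; correspondingly the specialized lower cohomology $H^{-1}\int_{\pi_Z}Mx^\nu=\ker(x\partial_x+\nu)$ is nonzero exactly for $\nu\in\Z$, a Zariski-dense set. So both the ``single annihilator $g$'' step and the ``generic flatness'' step collapse, and the conclusion in the form you state it — vanishing for \emph{all} $\nu$ in a nonempty Zariski-open subset of $\C^n$ — is false in general; the same resonance phenomenon occurs for modules of the form $\mathcal B_{X|Z\times T}$ (e.g.\ $X=V(x_1-1)\subset T^2$ over a point, where the bad set is $\{\nu_2\in\Z\}$).

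The statement is correct when ``generic'' is read, as in Loeser--Sabbah and implicitly throughout the paper, as ``outside a countable, locally finite union of integer translates of hyperplanes'' (non-resonance). This is exactly what the argument the paper cites ([1.3.5] of Loeser--Sabbah) produces: one uses the $\Z^n$-translation symmetry of the problem — the contiguity isomorphisms of Proposition~\ref{prop:contiguity}, i.e.\ the invertibility of the $\s_i$ — and works over a localization $\C[\nu]_{\rm loc}$ of $\C[\nu]$ in which the relevant translated linear forms are inverted; over $D_Z\otimes\C[\nu]_{\rm loc}$ the Mellin transform becomes coherent, and only then can one specialize $\nu$ and commute specialization with cohomology. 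That localization/shift structure is precisely the input your argument lacks: a spreading-out over ${\rm Spec}\,\C[\nu]$ by ``coherence plus generic flatness'' cannot distinguish Zariski-generic from non-resonant parameters, and here the distinction is essential.
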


For a given $D_{Z_K}$-module $N$ and a vector $a\in\Z^n$, we define a $D_{Z_K}$-module $N^{(a)}$ by twisting its action by a shift.
Namely, $N^{(a)}$ is same as $N$ as a sheaf of abelian groups and we define the action of $P(\nu)\in D_{Z_K}$ on $n\in N$ by $P(\nu)\underset{(a)}{\cdot}n:=P(\nu-a)\cdot n$.
Here, the action of $P(\nu-a)$ on the right-hand side is given by that of $N$.

\begin{proposition}\label{prop:contiguity}
 $\mathfrak{M}(M)^{(a)}$ is isomorphic to $\mathfrak{M}(M)$ as a $D_{Z_K}$-module.
\end{proposition}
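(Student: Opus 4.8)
The plan is to exhibit an explicit isomorphism of $\DD$-modules $\mathfrak{M}(M)^{(a)} \xrightarrow{\sim} \mathfrak{M}(M)$ and then note that it is in particular $D_{Z_K}$-linear; since it suffices to do this for $a = e_i$ a unit vector (composing to reach arbitrary $a \in \Z^n$), the proof reduces to a single index $i$. Using Theorem \ref{thm:relative_TCLI}(1), I would work on the model $H^n(\relOmega{\bullet}(M), \nabla_\omega)$, where the shift operator is already given explicitly by $\sigma_i[\xi(\nu)] = [x_i \xi(\nu + e_i)]$. The natural candidate map $\phi_i : \mathfrak{M}(M)^{(e_i)} \to \mathfrak{M}(M)$ is precisely the one induced by $\sigma_i$ (or its inverse, depending on sign conventions): on classes, $[\xi(\nu)] \mapsto [x_i \xi(\nu + e_i)]$, reindexed so that it intertwines the twisted $D_{Z_K}$-action $\underset{(e_i)}{\cdot}$ on the source with the ordinary action on the target.

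The key steps, in order, are as follows. First, I would check that $\phi_i$ is well-defined on cohomology, i.e. that multiplication by $x_i$ together with the substitution $\nu \mapsto \nu + e_i$ sends $\nabla_\omega$-coboundaries to $\nabla_\omega$-coboundaries and commutes with $\nabla_\omega$ up to the shift; this is the content of the relation already recorded in the text, that $\sigma_i$ descends to $H^n$. Second, I would verify $D_{Z_K}$-linearity: for $P(\nu) \in D_{Z_K}$ and a class $[\xi(\nu)]$ in the source, one has $\phi_i\big(P(\nu) \underset{(e_i)}{\cdot} [\xi(\nu)]\big) = \phi_i\big([P(\nu - e_i)\xi(\nu)]\big) = [x_i\, P(\nu)\big|_{\nu \mapsto \nu + e_i}\,\xi(\nu + e_i)]$, and since the $D_{Z_K}$-action commutes with multiplication by the unit $x_i$ and with the reindexing, this equals $P(\nu) \cdot \phi_i([\xi(\nu)])$. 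Third, bijectivity: $x_i$ is invertible in $R$ (and on $T$), and the substitution $\nu \mapsto \nu + e_i$ is invertible, so an explicit two-sided inverse is $[\eta(\nu)] \mapsto [x_i^{-1}\eta(\nu - e_i)]$, which one checks is again well-defined on cohomology by the same computation. Together these give the claimed $D_{Z_K}$-module isomorphism.

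Actually, a cleaner way to package all of this is to observe that $\sigma_i^{\pm 1}$ are already part of the $\DD$-action on $\mathfrak{M}(M)$, so left multiplication by $\sigma_i$ is an endomorphism of $\mathfrak{M}(M)$ as a $D_{Z_K}$-module \emph{up to the contiguity twist}: the commutation relation $[\s_i, \nu_j] = \delta_{ij}\s_j$ in $R^{\rm pol}$ says exactly that conjugation by $\s_i$ shifts $\nu_i$ by one, which is the defining feature of the twist $(\cdot)^{(e_i)}$. So the map $m \mapsto \s_i \cdot m$ is the desired isomorphism $\mathfrak{M}(M)^{(e_i)} \to \mathfrak{M}(M)$, and it is invertible with inverse $m \mapsto \s_i^{-1} \cdot m$ because $\s_i$ is a unit in $R$. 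I would phrase the proof this way and relegate the cohomological well-definedness to the citation of \cite{loeser1991equations} already invoked for Theorem \ref{thm:relative_TCLI}. The one point requiring a little care — and the only place I anticipate a genuine (if minor) obstacle — is bookkeeping the sign/direction of the shift: one must match the convention $\nu_i \leftrightarrow -x_i\partial_{x_i}$ and the definition $P(\nu)\underset{(a)}{\cdot} n := P(\nu - a)\cdot n$ against the formula $\sigma_i[\xi(\nu)] = [x_i\xi(\nu + e_i)]$, so that the twist is by $+e_i$ rather than $-e_i$; getting this consistent is the substantive (though routine) part of writing it out.
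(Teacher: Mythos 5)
Your proposal is correct and is essentially the paper's argument: the paper defines $\varphi([\xi(\nu)]):=[x^a\xi(\nu+a)]$ — that is, the map given by acting with $\sigma^a$, exactly your map done for general $a$ at once rather than composed from unit shifts — and checks $\mathcal{O}_{Z_K}$-linearity and compatibility with vector fields $\partial_z$ by direct computation, with invertibility implicit since $\sigma^a$ is a unit. Your sign bookkeeping ($+e_i$, matching $P(\nu)\underset{(a)}{\cdot}n:=P(\nu-a)\cdot n$ against $\sigma_i[\xi(\nu)]=[x_i\xi(\nu+e_i)]$) agrees with the paper's convention.
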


\begin{proof}
    We define $\varphi: \mathfrak{M}(M)^{(a)}\to \mathfrak{M}(M)$ by $\varphi\left( [\xi(\nu)]\right):=[x^a\xi(\nu+a)]$.
    It is clear that $\varphi$ respects a scalar multiplication by any $a(z;\nu)\in \OO_{Z_K}$.
    On the other hand, an identity $\varphi(\partial_z\underset{(a)}{\cdot}[\xi(\nu)])=\partial_z\cdot\varphi([\xi(\nu)])$
    is also true by a direct computation.
    Here, $\partial_z$ is any local vector field on $Z_K$.
\end{proof}

Now, let us consider the characteristic cycle $CC(\mathfrak{M}(M))$ of $\mathfrak{M}(M)$.
The following discussions can be found in a standard literature such as \cite[\S 2.1,2.2]{hotta2007d}.
Let $Y$ be a smooth algebraic variety defined over a field of characteristic zero.
We equip $D_{Y}$ with the order filtration so that the associated graded ring ${\rm gr}(D_{Y})$ is isomorphic to $\mathcal{O}_{T^*Y}$, the structure sheaf of the cotangent bundle $T^*Y$.
For a coherent $D_{Y}$-module $N$, the associated graded ${\rm gr}(N)$ with respect to a good filtration of $N$ is a $\mathcal{O}_{T^*Y}$-module .
Let ${\rm Supp}_0({\rm gr}(N))$ denotes the set of minimal associated primes of ${\rm gr}(N)$.
Characteristic cycle $CC(N)$ of $N$ is a positive algebraic cycle
\begin{equation}
    CC(N):=\sum_{\mathfrak{p}\in {\rm Supp}_0({\rm gr}(N))}m_\mathfrak{p} V(\mathfrak{p}),
\end{equation}
which does not depend on the choice of a good filtration on $N$.
Here, we set $m_\mathfrak{p}:={\rm length}_{\mathcal{O}_{T^*Y,\mathfrak{p}}}({\rm gr}(N)_{\mathfrak{p}})$ and $V(\mathfrak{p}):={\rm Spec} \,(\mathcal{O}_{T^*Y}/\mathfrak{p})$ viewed as a closed subscheme of $T^*Y$.
The {\it characteristic variety} ${\rm Char}(N)$ is the support of $CC(N)$.
The {\it singular locus} ${\rm Sing}(N)$ of $N$ is defined by ${\rm Sing}(N)\,=\,\varpi\left({\rm Char}(N)\setminus T^*_{Y} Y\right)$, where $\varpi:T^*Y\to Y$ is the canonical projection and $T^*_{Y} Y$ is the zero section of $T^*Y$.
Note that ${\rm Char}(N)$ is a conic subvariety of $T^* Y$, hence ${\rm Sing}(N)$ is a closed subvariety of $Y$.

\begin{proposition}\label{prop:defined over C}
    There is an algebraic cycle on $T^*Z$ whose base change via $\mathbb{C}\subset K$ is $CC(\mathfrak{M}(M))$. In particular, both ${\rm Char}(\mathfrak{M}(M))$ and ${\rm Sing}(\mathfrak{M}(M))$ are defined over $\C$.
\end{proposition}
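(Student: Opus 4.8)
The plan is to show that the entire construction of $\mathfrak{M}(M)$, together with a good filtration on it, descends to the field $\C$, so that the characteristic cycle — being intrinsic to the filtered module — is the base change of a cycle on $T^*Z$. First I would observe that the complex $(\relOmega{\bullet}(M),\nabla_\omega)$ is, by construction, obtained from a complex defined over $\C[\nu_1,\dots,\nu_n]$ by the flat base change $\C[\nu_1,\dots,\nu_n]\hookrightarrow K$: indeed $\mathfrak{M}^{\rm pol}(M)$ is a $D_Z\boxtimes R^{\rm pol}$-module with $R^{\rm pol}$ defined over $\C$, and $\mathfrak{M}(M) = K\otimes_{\C[\nu]}\mathfrak{M}^{\rm pol}(M)$. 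By Theorem \ref{thm:relative_TCLI}(1), $\mathfrak{M}(M)\simeq H^n(\relOmega{\bullet}(M),\nabla_\omega)$, and since $K$ is flat over $\C[\nu]$, this top cohomology commutes with the base change; hence $\mathfrak{M}(M) = K\otimes_{\C[\nu]}\mathfrak{M}^{\rm pol}(M)$ as a $\DD$-module, where $\mathfrak{M}^{\rm pol}(M)$ is already a module over $D_Z\boxtimes R^{\rm pol}$.

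Next I would pass from the difference-differential ring $\DD$ to the ordinary ring of differential operators. One cannot filter $\DD$ by the order of the shift operators $\sigma_i$ (they are units), so instead I would argue as in the standard treatment of GKZ-type systems: the $D_{Z_K}$-module $\mathfrak{M}(M)$ is coherent over $D_{Z_K}$, and by Proposition \ref{prop:contiguity} its $\Z^n$ worth of shifts are all isomorphic as $D_{Z_K}$-modules. Working over a suitable localization $\C[\nu]_{\rm loc}$ of $\C[\nu]$ inverting finitely many polynomials (those needed so that the coherence statements of Theorem \ref{thm:relative_TCLI} and Lemma \ref{lem:relative_TCLI} hold and so that the module becomes $D$-coherent), the module $\mathfrak{M}^{\rm pol}(M)_{\rm loc}$ becomes a coherent $D_{Z}\boxtimes\C[\nu]_{\rm loc}$-module, i.e.\ a coherent $D_{Z\times S}$-module along the trivial $\C[\nu]_{\rm loc}=\OO_S$-factor with $S$ an open subset of affine space. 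Choose a good filtration $F_\bullet$ on this coherent $D_Z\boxtimes\C[\nu]_{\rm loc}$-module (good filtrations exist since $Z$ is quasi-projective). Then ${\rm gr}^F$ is a coherent $\OO_{T^*Z}\otimes_\C\C[\nu]_{\rm loc}$-module, and the base change $K\otimes_{\C[\nu]_{\rm loc}}F_\bullet$ is a good filtration on $\mathfrak{M}(M)$ over $D_{Z_K}$.

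It then remains to compute the cycle. Because $K$ is flat over $\C[\nu]_{\rm loc}$ and localization preserves primary decomposition under flat base change of this type, the minimal associated primes of $K\otimes {\rm gr}^F(\mathfrak{M}^{\rm pol}(M)_{\rm loc})$ are exactly the extensions of the minimal associated primes $\mathfrak p$ of ${\rm gr}^F$ whose zero locus dominates $\operatorname{Spec}\C[\nu]_{\rm loc}$ — but all of them do, since $\mathfrak{M}(M)$ is by construction a $\C(\nu)$-module that is "constant in $\nu$" up to the isomorphisms of Proposition \ref{prop:contiguity}, so no component can be concentrated over a proper subvariety of $\nu$-space. Each such $\mathfrak p$, being a prime of $\OO_{T^*Z}\otimes\C[\nu]_{\rm loc}$ that dominates the $\nu$-space, is the pullback of a prime of $\OO_{T^*Z}$ (its generic fibre over $\C(\nu)$ is a prime, and clearing denominators recovers a $\C$-prime), and the multiplicity (a length over the local ring) is preserved by the faithfully flat map on the relevant Artinian local rings. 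Defining $CC$ on $T^*Z$ to be the sum of these $\C$-primes with the corresponding multiplicities gives a cycle whose base change to $K$ is $CC(\mathfrak{M}(M))$; since $CC$ does not depend on the good filtration, the result is canonical. I expect the main obstacle to be the bookkeeping in the middle step: making precise the passage from the $\sigma$-invertible ring $\DD$ to an honest coherent $D$-module over a localized $\nu$-space and checking that the "$\nu$-constancy" from Proposition \ref{prop:contiguity} really forces every characteristic-variety component to dominate $\nu$-space, so that nothing is lost or gained in the base change; the flatness and primary-decomposition arguments themselves are routine commutative algebra.
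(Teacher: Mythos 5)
There is a genuine gap at the decisive step. After spreading the module out over a localization $\C[\nu]_{\rm loc}$ and taking a good filtration, you claim that every minimal prime $\mathfrak p$ of the associated graded that dominates $\nu$-space ``is the pullback of a prime of $\OO_{T^*Z}$ (its generic fibre over $\C(\nu)$ is a prime, and clearing denominators recovers a $\C$-prime).'' This is false in general: dominance over $\operatorname{Spec}\C[\nu]_{\rm loc}$ does not force the family to be constant in $\nu$. For instance, the prime $(z\zeta+\nu_1)\subset\C[z,\zeta]\otimes\C[\nu]$ dominates $\nu$-space, and its generic fibre is a prime of $\OO_{T^*Z_K}$ that is genuinely \emph{not} defined over $\C$; clearing denominators only returns an ideal with coefficients in $\C[\nu]$, not one generated by elements of $\OO_{T^*Z}$. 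But ``the characteristic variety does not move with $\nu$'' is exactly the nontrivial content of the proposition, so your argument assumes what it needs to prove. (The preceding steps — base change along $\C[\nu]\hookrightarrow K$, existence of a good filtration after localizing, preservation of minimal primes and lengths under the flat extension — are fine, but they only reduce the problem to this constancy statement.)

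The paper closes this gap by using Proposition \ref{prop:contiguity} in a much stronger way than you do. You invoke it only to argue that no component is concentrated over a proper subvariety of $\nu$-space; the paper instead observes that the twist $N\mapsto N^{(a)}$ shifts every prime in the characteristic cycle by $\nu\mapsto\nu+a$, while the contiguity isomorphism $\mathfrak{M}(M)^{(a)}\simeq\mathfrak{M}(M)$ forces $CC(\mathfrak{M}(M))$ to be invariant under all these shifts for $a\in\Z^n$. Since the only rational functions $p(\nu)\in K$ with $p(\nu+a)=p(\nu)$ for all $a\in\Z^n$ are the constants, a standard descent argument (the shifts are automorphisms of $K$ over $\C$ with fixed field $\C$) shows that the characteristic variety, each minimal prime, and hence the whole cycle are defined over $\C$. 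If you want to salvage your route, you must replace the ``dominance implies pullback'' step by precisely this shift-invariance-plus-descent argument; without it the proof does not go through.
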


\begin{proof}
    Given an ideal $I$ of $\mathcal{O}_{T^*Z_K}$ and a vector $a\in\Z^n$, we set $I^{(a)}:=\{ p(\nu+a)\mid p(\nu)\in I\}$.
    It follows that $(\mathcal{O}_{T^*Z_K}/I)^{(a)}=\mathcal{O}_{T^*Z_K}/I^{(a)}$ as $\mathcal{O}_{T^*Z_K}$-modules, hence an equality $CC(N^{(a)})=\sum_{\mathfrak{p}\in {\rm Supp}_0({\rm gr}(N))}m_\mathfrak{p} V(\mathfrak{p}^{(a)})$.
    By Proposition \ref{prop:contiguity}, we obtain an identity of the characteristic varieties ${\rm Char}(\mathfrak{M}(M))={\rm Char}(\mathfrak{M}(M)^{(a)})$.
    By a standard argument on descent, \cite[Chapter 16, proposition 8]{milne2012algebraic}, the characteristic variety ${\rm Char}(\mathfrak{M}(M))$ is defined over $\mathcal{O}_{T^*Z},$ hence so is each $\mathfrak{p}\in {\rm Supp}_0({\rm gr}(\mathfrak{M}(M)))$.
    Note that an identity $\{ p(\nu)\in K\mid \forall a\in\Z^n, p(\nu+a)=p(\nu)\}$ holds true.
    The argument above proves the proposition.
\end{proof}

We also note that there is a localization $\mathbb{C}[\nu]_{\rm loc}$ of $\C[\nu]$ such that 
an equality $CC(\mathfrak{M}(M))=CC(\mathfrak{M}(M)_{\rm loc}(\nu))$ holds for a generic vector $\nu\in\C^n$.
Here, $\mathfrak{M}(M)_{\rm loc}$ is a version of partial Mellin transform over $\mathbb{C}[\nu]_{\rm loc}$ (\cite[p469]{loeser1991equations}) and $(\nu)$ represents a substitution.
The identity follows from another identity
${\rm gr}(\mathfrak{M}(M)_{\rm loc}(\nu))={\rm gr}(\mathfrak{M}(M)_{\rm loc})(\nu),$
which is deduced from a similar argument as \cite[p469]{loeser1991equations}.
Thus, we obtain the following proposition.

\begin{proposition}
    ${\rm Char}\left( H^0\int_{\pi_Z}Mx^\nu\right)$ is independent of a generic choice of $\nu\in\mathbb{C}^n$.
\end{proposition}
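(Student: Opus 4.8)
The plan is to identify, for generic $\nu\in\C^n$, the $D_Z$-module $H^0\int_{\pi_Z}Mx^\nu$ with a numerical specialization of the localized partial Mellin transform $\mathfrak{M}(M)_{\rm loc}$, and then to combine the identity $CC(\mathfrak{M}(M))=CC(\mathfrak{M}(M)_{\rm loc}(\nu))$ noted just above with the fact that $CC(\mathfrak{M}(M))$ is a single algebraic cycle that does not involve $\nu$ (Proposition \ref{prop:defined over C}).

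First I would recall that $\pi_Z\colon Z\times T\to Z$ is affine, so the $D_Z$-module direct image $\int_{\pi_Z}Mx^\nu$ is computed by the relative de Rham complex of the twisted module $Mx^\nu$; after trivializing the twist $x^\nu$, this is exactly the complex $(\relOmega{\bullet}(M),\nabla_\omega)$ of \eqref{eq:de Rham complex} with the coefficient ring $\C[\nu]_{\rm loc}$ replaced by $\C$ upon substituting a numerical value of $\nu$, placed so that $H^0\int_{\pi_Z}$ corresponds to the top cohomology in degree $n$. By Lemma \ref{lem:relative_TCLI}, for $\nu$ outside a proper Zariski closed subset of $\C^n$ all cohomology sheaves of this complex except the one in degree $n$ vanish; combined with the numerically specialized, localized form of Theorem \ref{thm:relative_TCLI}(1) --- that is, the relative analogue of \cite[1.2.1 (1)]{loeser1991equations} --- this yields a canonical isomorphism of $D_Z$-modules
$$H^0\int_{\pi_Z}Mx^\nu\;\simeq\;\mathfrak{M}(M)_{\rm loc}(\nu),$$
valid for all $\nu$ outside a proper Zariski closed subset $\Sigma\subset\C^n$ (namely the union of the locus excluded by Lemma \ref{lem:relative_TCLI} and the locus where $\C[\nu]_{\rm loc}$ fails to specialize flatly at $\nu$).

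Next I would invoke the two statements established just before the proposition: that ${\rm gr}(\mathfrak{M}(M)_{\rm loc}(\nu))={\rm gr}(\mathfrak{M}(M)_{\rm loc})(\nu)$ for generic $\nu$, so that forming the characteristic cycle commutes with numerical specialization, and hence $CC(\mathfrak{M}(M)_{\rm loc}(\nu))=CC(\mathfrak{M}(M))$ for all generic $\nu$. After enlarging $\Sigma$ to absorb these extra genericity conditions and combining with the isomorphism above, one obtains
$$CC\!\left(H^0\int_{\pi_Z}Mx^\nu\right)=CC(\mathfrak{M}(M))\qquad(\nu\notin\Sigma),$$
and the right-hand side is an algebraic cycle on $T^*Z$ which by Proposition \ref{prop:defined over C} is defined over $\C$ and, in particular, does not depend on $\nu$. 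Taking supports and using that ${\rm Char}(-)$ is by definition the support of $CC(-)$, we conclude that ${\rm Char}\left(H^0\int_{\pi_Z}Mx^\nu\right)$ equals the fixed conic subvariety ${\rm Char}(\mathfrak{M}(M))\subset T^*Z$ for every $\nu\notin\Sigma$, which is the assertion.

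The only genuine work is the first step: one must check that the $D$-module pushforward, the relative twisted de Rham complex, and Loeser's localized partial Mellin transform all produce the same $D_Z$-module after numerical specialization of $\nu$, and pin down a single proper Zariski closed bad set $\Sigma$ outside of which the vanishing of Lemma \ref{lem:relative_TCLI}, the flat specialization of $\C[\nu]_{\rm loc}$, and the commutation of ${\rm gr}$ with substitution all hold simultaneously. This is routine bookkeeping in the style of \cite[1.3.5]{loeser1991equations} and \cite[p469]{loeser1991equations}; granting it, the $\nu$-independence of the characteristic variety follows at once.
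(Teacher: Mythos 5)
Your proposal is correct and takes essentially the same route as the paper: identify $H^0\int_{\pi_Z}Mx^\nu$ with the specialization $\mathfrak{M}(M)_{\rm loc}(\nu)$ of the localized Mellin transform, use ${\rm gr}(\mathfrak{M}(M)_{\rm loc}(\nu))={\rm gr}(\mathfrak{M}(M)_{\rm loc})(\nu)$ to get $CC(\mathfrak{M}(M)_{\rm loc}(\nu))=CC(\mathfrak{M}(M))$ for generic $\nu$, and conclude via Proposition \ref{prop:defined over C} that this cycle is defined over $\C$ and hence $\nu$-independent. The only difference is that you spell out the identification of the pushforward with the specialized Mellin transform (via the relative de Rham complex, Theorem \ref{thm:relative_TCLI} and Lemma \ref{lem:relative_TCLI}), which the paper leaves implicit by citing Loeser--Sabbah.
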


Let us apply the construction above to a local cohomology group.
By our assumption, the local cohomology complex 
$\mathbb{R}\Gamma_X(\mathcal{O}_{Z\times T})$ is purely $({\rm codim}X)$-codimensional .
We write $\mathcal{B}_{X|Z\times T}$ for the $({\rm codim}X)$-th cohomology group of this complex.
It is naturally equipped with a structure of a left $D_{Z\times T}$-module.

\begin{definition}
\begin{enumerate}
    \item     The partial Mellin transform $\mathfrak{M}(\mathcal{B}_{X|Z\times T})$ is called a {\it hypergeometric $D_{Z_K}$-module} and denoted by $M^{\rm hyp}_{\pi}$.
    \item For a generic $\nu\in\mathbb{C}^n$, one sets
$$
M^{\rm hyp}_\pi(\nu):=H^0\int_{\pi_Z}\mathcal{B}_{X|Z\times T}x^\nu
$$
and call it a hypergeometric $D_{Z}$-module.
\end{enumerate}
    
\end{definition}

\subsection{Euler discriminant and the singular locus}\label{subsec:ED and SL}

In this section, we reproduce \cite[Theorem 5.3]{fevola2024euler} in our setup. 
For a given point $z\in Z$, we write $\iota_z:\{ z\}\hookrightarrow  Z$ for the canonical embedding.
We set $
    \chi_z(M)\,=\,\sum_{i=0}^\infty (-1)^i\dim_{\C}{\rm Ext}^i_{D_Z}(M,\mathcal{O}_z^{\rm an})
$, where $\mathcal{O}_z^{\rm an}$ is the ring of convergent series at $z$.
We write $\chi^*$ for the holonomic rank of $M$.
Let $\nabla_\chi( Z,M)$ denotes the closure of a set $\{z\in Z\mid \chi_z(M)<\chi^*\}$.
Note that the set $\{z\in Z\mid \chi_z(M)<\chi^*\}$ may not be a closed subvariety of $ Z$ for a general $M$.
A simple counter example is given by a local cohomology.

\begin{lemma}\label{lem:5.6}
    Let $M$ be a holonomic $\mathcal{D}_Z$-module and let ${\rm Sing}^1(M)$ be the union of the codimension $1$ components of the singular locus of $M$.
    Then, one has an inclusion
    \begin{equation}\label{eqn:codim1 inclusion}
        {\rm Sing}^1(M)\,\subset\, {\nabla_{\chi}( Z,M)}.
    \end{equation}
\end{lemma}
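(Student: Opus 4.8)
The plan is to prove the inclusion $\mathrm{Sing}^1(M) \subset \nabla_\chi(Z,M)$ by showing its contrapositive locally in codimension one: if a point $z$ of codimension one (i.e.\ lying on an irreducible divisor $Y \subset Z$) does not belong to $\nabla_\chi(Z,M)$, then $Y$ is not a component of $\mathrm{Sing}(M)$. So fix an irreducible hypersurface $Y \subset Z$ which is a component of $\mathrm{Sing}(M)$, and assume for contradiction that $\chi_z(M) = \chi^*$ for a generic $z \in Y$ (equivalently, $Y \not\subset \nabla_\chi(Z,M)$, using that $\nabla_\chi(Z,M)$ is the \emph{closure} of the drop locus, so if $Y \not\subset \nabla_\chi$ then the generic point of $Y$ has full $\chi_z$).

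First I would reduce to a one-dimensional transversal situation. Since $Y$ is a codimension-one component of $\mathrm{Sing}(M)$, pick a generic smooth point $z_0 \in Y$ away from all other components of $\mathrm{Sing}(M)$ and from the singular locus of $Y$, and restrict $M$ to a smooth curve $C \subset Z$ meeting $Y$ transversally at $z_0$, with $C \setminus \{z_0\}$ avoiding $\mathrm{Sing}(M)$ entirely. The non-characteristic inverse image $M_C := \iota_C^\dagger M$ (or $\mathbf{L}\iota_C^* M[\dim Z - 1]$) is then a holonomic $D_C$-module, smooth on $C \setminus \{z_0\}$; by genericity of $z_0$ one can arrange non-characteristicity so that holonomic rank, characteristic cycle multiplicities along $Y$, and the Euler characteristics $\chi_z(M)$ for $z$ near $z_0$ are all preserved under restriction (this is where I would invoke the standard behavior of $\mathrm{Ext}$-Euler characteristics and $b$-functions under non-characteristic restriction, e.g.\ as in \cite{hotta2007d}). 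Thus it suffices to treat $Z$ a curve and $Y = \{z_0\}$ a point.

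Next, on the curve, I would use the local index formula for holonomic $D$-modules on a curve: for $M$ holonomic on a disk around $z_0$, smooth of rank $r = \chi^*$ on the punctured disk, one has
\begin{equation}
\chi_{z_0}(M) \;=\; r \;-\; \mathrm{irr}_{z_0}(M) \;-\; \bigl(\text{jump term measuring the non-triviality of } M \text{ at } z_0\bigr),
\end{equation}
where the jump term is a nonnegative integer that vanishes if and only if $z_0 \notin \mathrm{Sing}(M)$ (this is the Deligne--Malgrange local index theorem; concretely $\chi_{z_0}(M) = r - \mathrm{irr}_{z_0}(M) - \mathrm{dim}\,(\text{coker of "can"}) - \ldots$, with every correction term $\geq 0$). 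The key point is simply: $z_0 \in \mathrm{Sing}(M)$ forces at least one of the nonnegative correction terms to be strictly positive, hence $\chi_{z_0}(M) < r = \chi^*$. This contradicts $\chi_{z_0}(M) = \chi^*$, proving $z_0 \in \nabla_\chi(Z,M)$ and therefore $Y \subset \nabla_\chi(Z,M)$.

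The main obstacle I anticipate is the non-characteristic restriction step: one must choose the transversal curve $C$ carefully so that $\iota_C$ is non-characteristic for $M$ at $z_0$ (possible for generic $C$ through a generic $z_0 \in Y$ because $\mathrm{Char}(M)$ is conic and $Y$ is a component of its image), and then verify that both sides of the desired inequality are computed correctly after restriction — i.e.\ that $\chi^*$ restricts to the generic rank of $M_C$ and that $\chi_{z_0}(M)$ equals $\chi_{z_0}(M_C)$. Both follow from compatibility of $\mathbb{R}\mathcal{H}om_{D}(-, \mathcal{O}^{\mathrm{an}})$ with non-characteristic pullback, but stating the genericity conditions precisely and citing the right form of the local index theorem on curves is the part that needs care; the rest is formal.
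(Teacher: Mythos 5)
Your proof is correct, but it takes a genuinely different route from the paper's. The paper argues directly in arbitrary dimension: at a generic smooth point $z$ of a codimension-one component $Y$ of ${\rm Sing}(M)$, the conormal $T^*_YZ$ occurs in $CC(M)$ with multiplicity $m_Y\geq 1$ and ${\rm Eu}_Y(z)=1$, so Kashiwara's index formula immediately gives $\chi_z(M)=\chi^*-m_Y<\chi^*$, and the inclusion follows by taking Zariski closures. You instead slice by a curve transverse to $Y$ at a generic smooth point and finish with the one-dimensional index count $\chi_{z_0}=r-m_{z_0}$ (Deligne--Malgrange); the same curve-slicing device via Cauchy--Kowalevski--Kashiwara is in fact used in the paper's proof of Proposition \ref{prop:singular set}, so the reduction itself is in the spirit of the paper. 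The one load-bearing step you should isolate is not CKK but the claim that the restricted module is still singular at $z_0$, i.e.\ that $T^*_{z_0}C$ occurs in ${\rm Char}(\iota_C^*M)$ with positive multiplicity; this is exactly the behaviour of characteristic varieties/cycles under non-characteristic pullback (e.g.\ \cite[Theorem 4.7]{kashiwara2003d}), and a vague appeal to compatibility of ${\rm Ext}$-Euler characteristics does not deliver it -- without it your ``jump term'' could a priori vanish. Once that microlocal input is quoted, the detour through the curve buys nothing beyond an explicit elementary endgame (the index $d-v$ on a disc), which is why the paper's argument is shorter; your version is more hands-on and makes the strictness of the drop very concrete. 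Two minor bookkeeping points: $Y\not\subset\nabla_\chi(Z,M)$ only gives $\chi_z\geq\chi^*$ (not $=\chi^*$) at a generic $z\in Y$, which still suffices for your contradiction; and ``the jump term vanishes iff $z_0\notin{\rm Sing}$'' is better phrased as $m_{z_0}\geq 1$ iff $z_0\in{\rm Sing}$, since positive irregularity already forces singularity.
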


\begin{proof}
Let $z$ be a smooth point of ${\rm Sing}^1(M)$.
The multiplicity of the characteristic cycle at $z$ is greater than or equal to one.
By Kashiwara's index formula \cite{kashiwara1973index} (see also \cite[Theorem~4.3.25]{dimca2004sheaves}), it follows that $\chi_z(M)<\chi^*$, which implies $z\in \nabla_{\chi}( Z,M)$.
The inclusion \eqref{eqn:codim1 inclusion} follows by taking the Zariski closure.

\if0
    We may assume that ${\rm Sing}^1(M)$ is non-empty.
    By the general theory of holonomic $\mathcal{D}_Z$-modules, there is a subvariety $S$ open, dense and smooth of ${\rm Sing}^1(M)$ such that the characteristic variety of $M$ restricted to $S$ is $T^*_S Z$ (conormal bundle).
    For any $z\in S$, there is a smooth curve $\Gamma\subset \mathcal{K}$ so that $z\in \Gamma$, $T^*_S\mathcal{K}\cap T^*_\Gamma \mathcal{K}\subset T^*_\mathcal{K}\mathcal{K}$ (zero-section) and $\Gamma\cap S= \{z\}$ near $z$.
    Then, by \cite[Theorem 4.7]{kashiwara2003d}, 
    \begin{equation}\label{eq:vanishing of pull-back}
        L^i\iota^*M
        =0\ \ \ (i\neq 0)
    \end{equation}
    where $\iota:\Gamma\hookrightarrow Z$ is the natural inclusion.
    Moreover, by \cite[Theorem 4.7 (3)]{kashiwara2003d}, we see that $T^*_z\Gamma$ is a component of the characteristic variety of $N:=\iota^*M$.

    We write $\tilde{\iota}_z:\{ z\}\hookrightarrow \Gamma$ for the natural inclusion.
    By \eqref{eq:vanishing of pull-back}, it follows that $L^{i}\tilde{\iota}_z^*N=L^{i}\iota_z^*M$ for any $i$.
    Thus, it is enough to show an inequality
    \begin{equation}\label{eq:local EC}
        \chi_z(M)=\dim_{\C}L^0\tilde{\iota}^*_zN-\dim_{\C}L^{-1}\tilde{\iota}^*_zN<\chi^*.
    \end{equation}
    Note that $\chi^*$ is the holonomic rank of $N$ by Cauchy-Kowalewsky-Kashiwara's theorem.
    \eqref{eq:local EC} is a consequence of \cite[Chapter IV, Theorem 4.1]{malgrange1991equations}.
    \fi
\end{proof}

In the following, we frequently equip a complex algebraic variety $Y$ with the analytic topology.
In the standard literature, the analytic variety obtained in this manner is denoted by $Y^{an}$.
In this paper, we stick to the notation $Y$ for the sake of simplicity of notaion.
We write $\mathcal{L}_\nu$ for the local system on $T$ whose monodromy along the $i$-th coordinate axis is $e^{2\pi\sqrt{-1}\nu_i}$.
Let $\pi_T:Z\times T\to T$ be the natural projection.
By abuse of notation, we write $\mathcal{L}_\nu$ for the local system $\pi_T^{-1}\mathcal{L}_\nu$.
It follows from \cite[Theorem 7.1.1]{hotta2007d}, equations \eqref{eq:fiber dimensions}, \eqref{eq:signed e.c.} and the base change formula that 
$$
\scalebox{0.95}{$
\chi_z(M^{\rm hyp}_\pi(\nu))=\displaystyle\sum_{i=0}^\infty (-1)^i\dim_{\C}
H^i\iota_z^{-1}\mathbb{R}\pi_!\mathcal{L}_{-\nu}[\dim X_z]=\sum_{i=0}^\infty (-1)^i\dim_{\C}
R^i\pi_{z!}\iota_{X_z}^{-1}\mathcal{L}_{-\nu}[\dim X_z]
=\chi_z$},
$$
where $\iota_{X_z}:X_z\to Z\times T$ is a closed immersion and $\pi_z:X_z\to \{z\}$.
Therefore, it follows that $\nabla_\chi(Z,M^{\rm hyp}_\pi(Z))=\nabla_\chi^\pi(Z)$.
Let $\nabla_\chi^\pi(Z)^1$ be the union of the codimension one components of $\nabla_\chi^\pi(Z)$.

\begin{lemma}\label{lem:Euler discriminant and singular locus}
    For generic $\nu\in\mathbb{C}^n$, we have $\nabla_\chi^\pi(Z)^{1}={\rm Sing}^1(M^{\rm hyp}_\pi(\nu)).$
\end{lemma}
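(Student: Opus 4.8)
The statement to prove is $\nabla_\chi^\pi(Z)^1 = \mathrm{Sing}^1(M^{\rm hyp}_\pi(\nu))$ for generic $\nu$. One inclusion is essentially already in place: by the computation just preceding the lemma we have $\nabla_\chi(Z, M^{\rm hyp}_\pi(\nu)) = \nabla_\chi^\pi(Z)$, and Lemma \ref{lem:5.6} applied to the holonomic $D_Z$-module $M = M^{\rm hyp}_\pi(\nu)$ gives $\mathrm{Sing}^1(M^{\rm hyp}_\pi(\nu)) \subset \nabla_\chi(Z, M^{\rm hyp}_\pi(\nu)) = \nabla_\chi^\pi(Z)$. Since $\mathrm{Sing}^1$ is by construction pure of codimension one, it lands inside $\nabla_\chi^\pi(Z)^1$. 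So the real content is the reverse inclusion $\nabla_\chi^\pi(Z)^1 \subset \mathrm{Sing}^1(M^{\rm hyp}_\pi(\nu))$.

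For the reverse inclusion I would argue contrapositively on a codimension-one component $Y$ of $\nabla_\chi^\pi(Z)$: suppose $Y \not\subset \mathrm{Sing}(M^{\rm hyp}_\pi(\nu))$, so there is a Zariski-open dense $Y^\circ \subset Y$ along which $M^{\rm hyp}_\pi(\nu)$ is a flat connection (an $\OO$-coherent, hence locally constant, $D_Z$-module near a generic point $z \in Y^\circ$). The plan is to show that this forces $\chi_z = \chi^*$ on $Y^\circ$, contradicting $Y \subset \nabla_\chi^\pi(Z)$. Here I would use the characterization of $\chi_z$ via the local Euler characteristic of $M^{\rm hyp}_\pi(\nu)$ at $z$ established above: $\chi_z = \chi_z(M^{\rm hyp}_\pi(\nu)) = \sum_i (-1)^i \dim_\C \mathrm{Ext}^i_{D_Z}(M^{\rm hyp}_\pi(\nu), \OO_z^{\rm an})$. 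When $M^{\rm hyp}_\pi(\nu)$ is a connection near $z$, the $\mathrm{Ext}$ groups vanish for $i \neq 0$ and $\dim_\C \mathrm{Hom} = \mathrm{rank} = \chi^*$ (Cauchy–Kovalevskaya–Kashiwara), giving $\chi_z = \chi^*$. Equivalently — and this is probably the cleanest route — I would invoke Kashiwara's index theorem directly: at a generic smooth point $z$ of such a $Y$ the multiplicity $m_{T^*_Y Z}$ of the characteristic cycle along the conormal $T^*_Y Z$ is zero (as $Y$ is not even in the characteristic variety away from a proper subset), and the contribution of all other components of $CC(M^{\rm hyp}_\pi(\nu))$ to the index formula at $z$ is, for $z$ generic in $Y$, just the zero-section term $\chi^*$ plus contributions supported on strictly lower-dimensional (hence, for generic $z \in Y$, avoidable) strata. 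Hence $\chi_z = \chi^*$, the desired contradiction.

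Two technical points must be handled carefully, and I expect the second to be the main obstacle. First, one needs that the set $\{z \in Z : \chi_z < \chi^*\}$ is genuinely Zariski closed and that $\chi$ is lower semicontinuous in the relevant sense, so that "generic point of a codimension-one component" behaves as expected; this is supplied by \cite[Theorem 3.1]{telen2024euler} and the constructibility of $\chi$ cited in Section \ref{sec:2}, together with \eqref{eq:signed e.c.} which makes $\chi_z \geq 0$ and the identification with the signed topological Euler characteristic. Second — and this is the crux — one must ensure that at a generic point $z$ of a codimension-one component $Y \subset \nabla_\chi^\pi(Z)$, no \emph{higher-codimensional} component of the characteristic variety of $M^{\rm hyp}_\pi(\nu)$ that happens to project onto a subvariety containing $z$ can spuriously make $M^{\rm hyp}_\pi(\nu)$ fail to be a connection at $z$ without $Y$ being in the singular locus; in other words, one needs $Y \subset \nabla_\chi^\pi(Z)^1$ to actually force $Y \subset \mathrm{Sing}(M^{\rm hyp}_\pi(\nu))$, which relies on the fact (to be used from the surrounding theory, via Proposition \ref{prop:defined over C} and the independence of $CC$ from generic $\nu$) that the drop in $\chi_z$ along a codimension-one locus is detected precisely by a nonzero conormal multiplicity there. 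I would establish this by taking a generic smooth curve germ $\Gamma$ through $z$ transverse to $Y$, restricting $M^{\rm hyp}_\pi(\nu)$ to $\Gamma$ (non-characteristic restriction, so cohomology concentrated in degree $0$ by \cite[Theorem 4.7]{kashiwara2003d}-type results), and comparing the local Euler characteristic on $\Gamma$ with the holonomic rank: a strict drop on $\Gamma$ at $z$ is equivalent to $T^*_z\Gamma$ lying in $\mathrm{Char}$ of the restriction, which by the transversality of $\Gamma$ forces $T^*_Y Z \subset \mathrm{Char}(M^{\rm hyp}_\pi(\nu))$, i.e. $z \in \mathrm{Sing}^1(M^{\rm hyp}_\pi(\nu))$. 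Taking Zariski closure over all codimension-one components then yields $\nabla_\chi^\pi(Z)^1 \subset \mathrm{Sing}^1(M^{\rm hyp}_\pi(\nu))$, and combined with the first paragraph this completes the proof.
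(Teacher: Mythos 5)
Your proposal is correct and in substance coincides with the paper's proof: one inclusion comes from Lemma \ref{lem:5.6} together with the identification $\nabla_\chi(Z,M^{\rm hyp}_\pi(\nu))=\nabla_\chi^\pi(Z)$, and the other from the fact that a holonomic module is an integrable connection off its singular locus, so $\chi_z=\chi^*$ there and hence $\{\chi_z<\chi^*\}\subset{\rm Sing}(M^{\rm hyp}_\pi(\nu))$, which after taking closures and codimension-one parts gives the equality. The "second technical point" you single out as the crux is actually vacuous: a generic point of a codimension-one component $Y$ with $Y\not\subset{\rm Sing}(M^{\rm hyp}_\pi(\nu))$ avoids, by the very definition of the singular locus, the projections of \emph{all} non-zero-section components of the characteristic variety (whatever their codimension), so the simple contrapositive argument already closes the proof and the transverse-curve restriction step is unnecessary.
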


\begin{proof}
    It follows from Lemma \ref{lem:5.6} that the right-hand side is contained in the left-hand side.
    The general theory of $D$-module shows that a holonomic $D$-module is an integrable connection outside its singular locus.
    This means that the Euler characteristics does not change on the nonsingular locus,
    which proves an inclusion
    \begin{equation}\label{eq:10}
        \nabla_\chi^\pi( Z)\subset{\rm Sing}(M^{\rm hyp}_\pi(\nu)).
    \end{equation} 
\end{proof}

\subsection{Pure codimensionality of Euler discriminant}\label{subsec:Pure codimensionality}

In this section, we prove the following theorem.

\begin{theorem}\label{thm:pure codimensionality}
Any irreducible component of the Euler discriminant $\nabla_\chi^\pi(Z)$ is of codimension one.
Namely, $\nabla_\chi^\pi(Z)$ is purely one-codimensional unless it is empty.
\end{theorem}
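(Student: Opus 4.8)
The plan is to establish pure one-codimensionality by combining Lemma~\ref{lem:Euler discriminant and singular locus}, which already gives $\nabla_\chi^\pi(Z)^1 = {\rm Sing}^1(M^{\rm hyp}_\pi(\nu))$, with a dimension argument ruling out higher-codimension components. The key point to prove is that every irreducible component of $\nabla_\chi^\pi(Z)$ has codimension \emph{at most} one; since it is a proper closed subvariety it cannot have codimension zero, and combined with Lemma~\ref{lem:Euler discriminant and singular locus} this forces every component to have codimension exactly one. So the task reduces to: show that the locus $\{z \in Z \mid \chi_z < \chi^*\}$ cannot contain a component of codimension $\geq 2$ in its closure, equivalently, that along any generic point of any irreducible component of $\nabla_\chi^\pi(Z)$ the codimension is one.

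First I would reduce to a curve transversality / restriction argument. Pick an irreducible component $W \subset \nabla_\chi^\pi(Z)$ and a generic smooth point $z_0 \in W$. If ${\rm codim}\, W \geq 2$, I would produce a generic one-dimensional disk (or smooth affine curve) $\Gamma \hookrightarrow Z$ through $z_0$ that meets $\nabla_\chi^\pi(Z)$ only at $z_0$ and meets it transversally in the relevant sense. Restricting the family $\pi : X \to Z$ to $\Gamma$ yields a family $\pi_\Gamma : X_\Gamma \to \Gamma$ over a curve whose Euler discriminant locus is (at most) the single point $z_0$. The strategy is then to derive a contradiction from the fact that the restricted family is a family of very affine locally complete intersection varieties over a curve for which the Euler characteristic drops at exactly one isolated point --- but by the codimension-one nature of the \emph{divisorial} Euler discriminant in the sense of Esterov, or by a local analysis of the monodromy of $\mathcal{L}_{-\nu}$ restricted to $X_\Gamma$, such isolated drops cannot occur unless the point already lies on a codimension-one component. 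Concretely, I would use the identity $\chi_z(M^{\rm hyp}_\pi(\nu)) = \chi_z$ established just before Lemma~\ref{lem:Euler discriminant and singular locus} together with the semicontinuity of the local Euler characteristic and the vanishing cycle / nearby cycle formalism: over a curve, a drop in $\chi$ at an isolated point is detected by nonzero vanishing cohomology, and the corresponding characteristic cycle of $M^{\rm hyp}_{\pi_\Gamma}(\nu)$ acquires a contribution at the conormal of $z_0$, hence $z_0 \in {\rm Sing}^1$ back upstairs --- but upstairs $z_0$ was chosen on a component of codimension $\geq 2$, contradicting that ${\rm Sing}^1$ consists only of codimension-one components.

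A cleaner route, which I would pursue in parallel, is to invoke the relative de Rham / Mellin machinery directly: by Theorem~\ref{thm:relative_TCLI} and the construction of $M^{\rm hyp}_\pi = \mathfrak{M}(\mathcal{B}_{X|Z\times T})$, the module $M^{\rm hyp}_\pi(\nu)$ is holonomic, so ${\rm Char}(M^{\rm hyp}_\pi(\nu))$ is Lagrangian in $T^*Z$ and each component is the conormal $T^*_Y Z$ of some closed irreducible $Y \subset Z$. By Lemma~\ref{lem:Euler discriminant and singular locus}, the codimension-one part of $\nabla_\chi^\pi(Z)$ equals ${\rm Sing}^1$. What must be excluded is a component $Y$ of $\nabla_\chi^\pi(Z)$ with ${\rm codim}\, Y \geq 2$ that is \emph{not} contained in the image of $T^*_{Y'}Z$ for any codimension-one $Y'$. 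For this I would argue that at a generic point of such a $Y$, a generic normal slice $Z' \hookrightarrow Z$ of dimension ${\rm codim}\, Y$ gives a restricted family whose Euler discriminant locus contains an isolated point, and then apply Esterov's theorem / the principal $A$-determinant comparison (cited in the introduction, and made precise later via Theorem~\ref{thm:pure codimensionality}'s intended use of \cite{esterov2013discriminant}) — but this is circular, so instead I would directly show, via the purity of the local cohomology $\mathbb{R}\Gamma_X(\mathcal{O}_{Z\times T})$ in codimension ${\rm codim}\, X$ and flatness of $\pi$ over $U$, that the Euler characteristic function $z \mapsto \chi_z$ is "divisorial" in its jump locus: the drop is governed by the vanishing of cohomology along a hypersurface, because the only way the twisted cohomology $H^n(X_z, \mathcal{L}_{-\nu})$ can change rank is through coalescence of critical points of the twisted form $\omega$ on fibers, which happens along a discriminant hypersurface. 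The hard part — and the step I expect to be the main obstacle — is precisely this last claim: ruling out \emph{isolated} (or more generally higher-codimension) loci of Euler characteristic drop without first knowing the discriminant is a divisor, i.e., proving the semicontinuity-plus-purity statement intrinsically rather than by appeal to Esterov's non-degenerate case. I anticipate the author handles this by the curve-restriction argument combined with a careful analysis showing that over a generic curve the family remains flat with locally complete intersection very affine fibers, so that the one-variable theory (where the Euler discriminant is automatically a finite set of points that must propagate to a divisor upstairs by irreducibility of components of ${\rm Sing}$) applies and yields the contradiction.
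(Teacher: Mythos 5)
Your reduction is fine (rule out components of codimension $\geq 2$, then conclude via Lemma \ref{lem:Euler discriminant and singular locus} and \eqref{eq:10}), and you correctly identify where the difficulty sits — but neither of your two routes actually closes it, so there is a genuine gap. The curve-slice argument is a non sequitur at its final step: if $z_0$ lies on a codimension-$\geq 2$ component of ${\rm Sing}(M^{\rm hyp}_\pi(\nu))$ and $\Gamma$ is a generic curve through $z_0$, then the restricted module over $\Gamma$ (equivalently the restricted family) is indeed singular at $z_0$ and its characteristic cycle does pick up the conormal of $z_0$ — this is perfectly consistent with the upstairs singular support being a conormal to a codimension-two subvariety (compare $H^2_{[Z']}(\mathcal{O}_Z)$ for $Z'$ smooth of codimension two), so no contradiction with "${\rm Sing}^1$ has only codimension-one components" is obtained; the isolated drop over the curve does not "propagate to a divisor upstairs." Your second route reduces to the assertion that critical points of $\omega$ can only coalesce along a hypersurface in $Z$, which is essentially a restatement of the theorem and is left unproven; as you yourself note, appealing to Esterov's divisoriality would be circular since the paper works precisely without his non-degeneracy hypothesis.

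The paper's actual mechanism is different and is the missing idea. Suppose $Z'$ is a component of ${\rm Sing}(M)$ of codimension $\geq 2$ (shrunk to be smooth and disjoint from other components). One proves the base-change identification \eqref{eq:local cohomology isom}, $\mathbb{R}\Gamma_{Z'}M\simeq \int_{\iota_{Z'}}M^{\rm hyp}_{\pi'}(\nu)[-{\rm codim}(Z')]$, where $\pi':X'\to Z'$ is the restricted family; this is done topologically through the Riemann--Hilbert correspondence and repeated base change for the two Cartesian squares, using flatness and the locally complete intersection hypothesis to keep the codimensions matched. Since $\int_{\iota_{Z'}}$ is exact and ${\rm codim}(Z')\geq 2$, this forces $H^0_{[Z']}(M)=H^1_{[Z']}(M)=0$. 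Then a general $D$-module lemma (Lemma \ref{lem:pure codimensionality}) — a holonomic module that is an integrable connection off a smooth closed subvariety of codimension $\geq 2$ and has vanishing $H^0_{[Z']}$ and $H^1_{[Z']}$ agrees with $H^0\int_j j^*M$ and hence with the canonical extension of the connection — shows $M$ is a connection in a neighborhood of $Z'$, contradicting $Z'\subset{\rm Sing}(M)$. In short, the divisoriality is extracted from the vanishing of low-degree local cohomology of the hypergeometric module along would-be small components, which in turn comes from the compatibility of the hypergeometric construction with restriction of the family; this structural input is absent from your proposal.
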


The proof relies on the following lemma.

\begin{lemma}\label{lem:pure codimensionality}
    Let $M$ be a holonomic $\mathcal{D}_Z$-module, let $Z'\subset Z$ be a smooth closed subvariety of codimension greater than or equal to two.
    Assume that $H^0_{[Z']}(M)=H^1_{[Z']}(M)=0$ and $M$ is an integrable connection on $Z\setminus Z'$.
    Then, $M$ is an integrable connection on $Z$.
    In particular, one has ${\rm Sing}(M)\cap Z'=\varnothing$.
\end{lemma}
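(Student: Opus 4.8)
The plan is to reduce the statement to a local computation around a point of $Z'$ and to use Kashiwara's equivalence together with the local cohomology hypotheses to rule out any characteristic-variety component lying over $Z'$. First I would work analytically-locally near a point $p \in Z'$, choosing coordinates $(t,y) = (t_1,\dots,t_c,y_1,\dots,y_{d-c})$ on $Z$ in which $Z' = \{t_1 = \cdots = t_c = 0\}$, where $c = \operatorname{codim} Z' \geq 2$. The statement ``$M$ is an integrable connection on $Z$'' is equivalent to ${\rm Char}(M) = T^*_Z Z$, i.e.\ the characteristic variety has no component whose projection to $Z$ is a proper subvariety. Since $M$ is already an integrable connection on $Z \setminus Z'$, any ``extra'' component of ${\rm Char}(M)$ must project into $Z'$; so it suffices to show no such component exists. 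I would do this by analyzing the algebraic local cohomology triangle
\begin{equation}
\mathbb{R}\Gamma_{[Z']}(M) \longrightarrow M \longrightarrow \mathbb{R} j_* j^* M \overset{+1}{\longrightarrow},
\end{equation}
where $j : Z \setminus Z' \hookrightarrow Z$ is the open inclusion.

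The key steps are as follows. The hypothesis $H^0_{[Z']}(M) = H^1_{[Z']}(M) = 0$ together with the triangle above gives that $M \to H^0(\mathbb{R} j_* j^* M) =: j_\dagger j^\dagger M$ is an isomorphism (here $j_\dagger$ denotes the $D$-module pushforward $\int_j$, and the vanishing of $H^0_{[Z']}(M)$ kills the kernel while the vanishing of $H^1_{[Z']}(M)$ together with $H^0_{[Z']}(M)=0$ forces surjectivity onto the zeroth cohomology and identifies $M$ with $H^0\int_j j^* M$). Since $j^* M$ is an integrable connection $E$ on $Z \setminus Z'$, it remains to understand $H^0\int_j E$ near $Z'$. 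Here I would invoke a purity/codimension estimate: because $c \geq 2$, for an integrable connection $E$ the sheaf $H^0\int_j E$ has no sections supported on $Z'$ and, more importantly, the standard estimate on the characteristic variety of a pushforward along an open embedding shows that any component of ${\rm Char}(H^0\int_j E)$ lying over $Z'$ would force $H^1_{[Z']}$ of the result (equivalently a nonvanishing local cohomology) to be nonzero — contradicting that $M \cong H^0\int_j E$ has $H^1_{[Z']}(M) = 0$. Concretely, one can run the argument coordinate-hyperplane by coordinate-hyperplane: writing $Z' $ as an intersection of $c$ smooth hypersurfaces and using that codimension is $\geq 2$, a Mayer–Vietoris / iterated-local-cohomology bookkeeping shows that the vanishing of $H^0_{[Z']}$ and $H^1_{[Z']}$ propagates to force regularity (no conormal directions) along $Z'$.

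An alternative, perhaps cleaner, route I would also keep in mind: slice by a generic smooth curve $\Gamma$ meeting $Z'$ transversally at a single point $z$, analogous to the curve-restriction argument sketched in the proof of Lemma \ref{lem:5.6}. By Kashiwara's non-characteristic restriction theorem, since $\Gamma$ can be chosen non-characteristic for the part of ${\rm Char}(M)$ away from a putative component $T^*_{Z'}Z$, the restriction $M|_\Gamma$ is concentrated in degree $0$ and a component of ${\rm Char}(M)$ over $Z'$ would produce the component $T^*_z\Gamma$ in ${\rm Char}(M|_\Gamma)$. One then checks that $H^0_{[\{z\}]}(M|_\Gamma) = H^1_{[\{z\}]}(M|_\Gamma) = 0$ — which follows from the hypotheses on $M$ via a base-change/spectral-sequence argument using $\operatorname{codim} Z' \geq 2$, so that the transversal slice sees the two vanishing local cohomologies — and then a holonomic $D$-module on a curve with vanishing $H^0_{[\{z\}]}$ and $H^1_{[\{z\}]}$ at a point and regular off that point is necessarily regular at that point, i.e.\ has no $T^*_z\Gamma$ component. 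This contradicts the existence of the bad component.

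The main obstacle is the transfer of the hypotheses $H^0_{[Z']}(M) = H^1_{[Z']}(M) = 0$ from $Z'$ to the transversal slice (or, in the first approach, the careful bookkeeping of iterated local cohomology along the $c$ hypersurfaces cutting out $Z'$): one must use the codimension hypothesis $c \geq 2$ essentially, since with $c = 1$ the conclusion is false, and this is precisely where a spectral sequence relating $\mathbb{R}\Gamma_{[Z']}$ on $Z$ to $\mathbb{R}\Gamma_{[\{z\}]}$ on $\Gamma$ must be set up and the low-degree terms controlled. Everything else — Kashiwara's index/restriction theorems, the structure of holonomic modules on curves, and the equivalence between ``integrable connection'' and ``characteristic variety equals the zero section'' — is standard and can be cited from \cite{hotta2007d} and \cite{dimca2004sheaves}.
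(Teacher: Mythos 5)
Your first step---using the local cohomology exact sequence and the vanishing of $H^0_{[Z']}(M)$ and $H^1_{[Z']}(M)$ to identify $M$ with $H^0\int_j j^*M$---is exactly how the paper begins. The gap is in what comes next. The paper's decisive input is the classical extension theorem: since ${\rm codim}\,Z'\geq 2$, the integrable connection $j^*M$ extends to an integrable connection $E$ on all of $Z$; because $E$ is $\mathcal{O}$-locally free and ${\rm codim}\,Z'\geq 2$, one also has $H^0_{[Z']}(E)=H^1_{[Z']}(E)=0$, so the same exact sequence gives $E\simeq H^0\int_j j^*E$, and hence $M\simeq H^0\int_j j^*M=H^0\int_j j^*E\simeq E$. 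Your Route 1 replaces this by the assertion that a component of ${\rm Char}\bigl(H^0\int_j j^*M\bigr)$ lying over $Z'$ would force $H^1_{[Z']}\neq 0$, attributed to a ``standard estimate on the characteristic variety of a pushforward along an open embedding.'' No such off-the-shelf estimate exists: once $M\simeq H^0\int_j j^*M$ is known (and $H^0_{[Z']}$ of such a pushforward vanishes automatically), the asserted implication is literally the contrapositive of the lemma being proved, so as written this step is circular. The content that must be supplied is precisely the codimension-$\geq 2$ extension theorem for integrable connections (equivalently, that $H^0\int_j$ of a connection on $Z\setminus Z'$ is again a connection on $Z$), which your sketch neither proves nor cites; the ``iterated local cohomology bookkeeping'' variant is likewise left without any concrete mechanism.

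Route 2 is not repairable as described. If a Lagrangian component of ${\rm Char}(M)$ lies over $Z'$, any curve $\Gamma$ through a point $z$ of its projection is automatically characteristic at $z$: the fiber of $T^*_\Gamma Z$ there is a hyperplane of $T^*_zZ$, while the fiber of the bad conormal has dimension at least two, so they meet outside the zero section. Hence Kashiwara's non-characteristic restriction theorem does not apply, and you can conclude neither that $M|_\Gamma$ is concentrated in degree $0$ nor that $T^*_z\Gamma$ appears in its characteristic variety. The transfer of hypotheses also fails: already for $M=\mathcal{O}_{\mathbb{C}^2}$, $Z'=\{0\}$ and $\Gamma$ a line through the origin, one has $H^1_{[z]}(M|_\Gamma)=H^1_{[0]}(\mathcal{O}_\Gamma)\neq 0$, since local cohomology degrees shift with codimension. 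Finally, the curve-level statement you want to finish with---a holonomic module on a curve, a connection off $z$, with $H^0_{[z]}=H^1_{[z]}=0$, is a connection at $z$---is exactly the $c=1$ case of the lemma, which you yourself observe is false: $N=\mathbb{C}[t,t^{-1}]$ on $\mathbb{A}^1$ satisfies $\mathbb{R}\Gamma_{[0]}(N)=0$ and is a connection off the origin, yet $T^*_0\mathbb{A}^1\subset{\rm Char}(N)$. So the slicing route collapses, and the only known way to close the argument is the extension-across-codimension-two input that the paper uses.
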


\begin{proof}
    Let $j:Z\setminus Z'\to Z$ be the natural inclusion.
    Since $Z'$ has codimension greater than or equal to two, $j^*M$ is extended to an integrable connection on $Z$.
    We write $E$ for this extension.
    By the assumption and an exact sequence
    $
    H^0_{[Z']}(M)\to M\to H^0\int_jj^*M\to H^1_{[Z']}(M),
    $
    we obtain an isomorphism $M\simeq H^0\int_jj^*M$.
    On the other hand, it also holds that $H^0_{[Z']}(E)=H^1_{[Z']}(E)=0$ because $Z'$ has codimension greater than or equal to two.
    By the same argument as above, it follows that $E=H^0\int_jj^*E$.
    Thus, $M\simeq H^0\int_jj^*M=H^0\int_jj^*E\simeq E$ is an integrable connection on $Z$.
\end{proof}

\noindent
(Proof of Theorem \ref{thm:pure codimensionality})
To simplify the notation, we set $M=M^{\rm hyp}_\pi(\nu)$ for a generic $\nu\in\mathbb{C}^n$.
It suffices to prove an identity 
${\rm Sing}^1(M)={\rm Sing}(M)$.
In fact, this identity implies that $\nabla_\chi^1(Z)=\nabla_\chi(Z)$ since ${\rm Sing}^1(M)=\nabla_\chi^\pi(Z)^1\subset\nabla^\pi_\chi(Z)\subset {\rm Sing}(M)$ by Lemma \ref{lem:Euler discriminant and singular locus} and \eqref{eq:10}.

Assume that $Z'$ is an irreducible component of ${\rm Sing}(M)$ whose codimension is greater than or equal to two.
We replace $Z'$ with an intersection $V\cap Z'$ for an open subset $V$ of $X$ so that $Z'$ is smooth and do not intersect with any other irreducible component.
In particular, $M$ is an integrable connection on $Z\setminus Z'$.
We prove that
\begin{equation}\label{eq:vanishing of local cohomology}
    H^0_{[Z']}(M)=H^1_{[Z']}(M)=0.
\end{equation}
Let $\pi':X'\to Z'$ be the base change of $\pi:X\to Z$ via $\iota:Z'\to Z$.
For this purpose, it is enough to prove that
\begin{equation}\label{eq:local cohomology isom}
\mathbb{R}\Gamma_{Z'}M\simeq\int_{\iota_{Z'}}M^{\rm hyp}_{\pi'}(\nu)[-{\rm codim}(Z')].
\end{equation}
In fact, \eqref{eq:vanishing of local cohomology} follows from \eqref{eq:local cohomology isom} since $\int_{\iota_{Z'}}$ is an exact functor and ${\rm codim}(Z')\geq 2$.
Lemma \ref{lem:pure codimensionality} implies that $Z'$ is not a part of the singular locus, which is a contradiction.
Let $\pi_{Z'}:Z'\times T\to Z'$ be the natural projection, $X'$ be the fiber product $X\times_Z Z'$ and let $\iota_{X'}:X'\to Z'\times T$ be the composition of the associated morphism $X'\to X$ and $\iota_X$.
We may assume that $X'\neq \varnothing$.
We define a local system $\mathcal{L}'_\nu$ on $Z'\times T$ in the same manner.
We note that $X'$ is again of locally complete intersection and 
$$
{\rm codim}(X):=\dim_{\C} (Z\times T)-\dim_{\C}(X)=\dim_{\C} (Z'\times T)-\dim_{\C}(X')=:{\rm codim}(X')
$$
since $\pi:X\to U$ is a flat family.
With this notation and the Riemann-Hilbert correspondence (\cite[p325, Theorem 14.4]{Borel}, \cite[Theorem 7.2.1]{hotta2007d}), \eqref{eq:local cohomology isom} is equivalent to the following isomorphism:
\begin{equation}\label{eq:topological local cohomology isom}
    \iota_*\iota^!\mathbb{R}\pi_{Z*}\iota_{X*}\iota_X^!\mathcal{L}_{\nu}\simeq
    \mathbb{R}{\iota_{Z'*}}\mathbb{R}\pi_{Z'*}\iota_{X'*}\iota_{X'}^!\mathcal{L}'_{\nu}[-{\rm codim}(Z')].
\end{equation}
To prove \eqref{eq:topological local cohomology isom}, we use the base change formula for the following Catesian diagrams:
$$
\begin{tikzcd}
    Z'\times T\arrow[d,"\pi_{Z'}"']\arrow{r}{\tilde{\iota}_{Z'}} & {Z}\times T\arrow[d, "\pi_{Z}"]\\
    Z' \arrow{r}{\iota} & Z 
\end{tikzcd}
\hspace{2em}
\text{and}
\hspace{2em}
\begin{tikzcd}
    X'\arrow[d,"\iota_{X'X}"']\arrow{r}{\iota_{X'}} & {Z'}\times T\arrow[d, "\tilde{\iota}_{Z'}"]\\
    X \arrow{r}{\iota_X} & Z\times T.
\end{tikzcd}
$$
The desired isomorphism \eqref{eq:topological local cohomology isom} is obtained by a repeated application of base change formulas.
We indicate applications of the base change formula by underlining the relevant terms:
\begin{align*}
\iota_*\underline{\iota^!\mathbb{R}\pi_{Z*}}\iota_{X*}\iota_X^!\mathcal{L}_{\nu}&\simeq \iota_*\mathbb{R}\pi_{Z'*}\underline{\tilde{\iota}_{Z'}^!\iota_{X*}}\iota_X^!\mathcal{L}_{\nu} \\
&\simeq \iota_*\mathbb{R}\pi_{Z'*}\iota_{X'*}\underline{\iota_{X'X}^!\iota_X^!}\mathcal{L}_{\nu} \\
&\simeq \iota_*\mathbb{R}\pi_{Z'*}\iota_{X'*}\iota_{X'}^!\underline{\tilde{\iota}_{Z'}^!\mathcal{L}_{\nu}} \\
&\simeq
    \mathbb{R}{\iota_{Z'*}}\mathbb{R}\pi_{Z'*}\iota_{X'*}\iota_{X'}^!\mathcal{L}'_{\nu}[-{\rm codim}(Z')].
\end{align*}
\qed

The beginning of the proof of Theorem \ref{thm:pure codimensionality} proves the following corollary.

\begin{corollary}\label{cor:ED=Sing}
For a generic $\nu\in\C^n$, one has the following identity:
    $$
    \nabla_\chi^\pi(Z)={\rm Sing}(M^{\rm hyp}_\pi(\nu)).
    $$
\end{corollary}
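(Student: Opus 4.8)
The plan is to extract the identity directly from the chain of inclusions already set up at the start of the proof of Theorem \ref{thm:pure codimensionality}. Fix a generic $\nu\in\C^n$ and abbreviate $M=M^{\rm hyp}_\pi(\nu)$. Lemma \ref{lem:Euler discriminant and singular locus} gives ${\rm Sing}^1(M)=\nabla_\chi^\pi(Z)^1$, while \eqref{eq:10} (the integrable-connection argument on the nonsingular locus) gives $\nabla_\chi^\pi(Z)\subset{\rm Sing}(M)$. Combining these with the trivial inclusion $\nabla_\chi^\pi(Z)^1\subset\nabla_\chi^\pi(Z)$ yields
\[
{\rm Sing}^1(M)=\nabla_\chi^\pi(Z)^1\;\subset\;\nabla_\chi^\pi(Z)\;\subset\;{\rm Sing}(M).
\]
So it suffices to prove the single identity ${\rm Sing}^1(M)={\rm Sing}(M)$, i.e. that the singular locus of $M$ has no irreducible component of codimension $\geq 2$; once this is known, all three sets in the displayed chain collapse to one, and in particular $\nabla_\chi^\pi(Z)={\rm Sing}(M)$.

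The identity ${\rm Sing}^1(M)={\rm Sing}(M)$ is precisely what the main body of the proof of Theorem \ref{thm:pure codimensionality} establishes: assuming $Z'$ is an irreducible component of ${\rm Sing}(M)$ of codimension $\geq 2$, one shrinks $Z'$ to a smooth locus disjoint from the other components, so that $M$ is an integrable connection on $Z\setminus Z'$, and then invokes Lemma \ref{lem:pure codimensionality}. The hypotheses of that lemma, $H^0_{[Z']}(M)=H^1_{[Z']}(M)=0$, follow from the local cohomology isomorphism \eqref{eq:local cohomology isom}, which identifies $\mathbb{R}\Gamma_{Z'}M$ with $\int_{\iota_{Z'}}M^{\rm hyp}_{\pi'}(\nu)[-{\rm codim}(Z')]$ for the base-changed family $\pi':X'\to Z'$; since $\int_{\iota_{Z'}}$ is exact and ${\rm codim}(Z')\geq 2$, the two lowest local cohomology sheaves vanish. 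The isomorphism \eqref{eq:local cohomology isom} itself is obtained, via Riemann--Hilbert, from the topological identity \eqref{eq:topological local cohomology isom}, which is a repeated application of the base change formula along the two Cartesian squares relating $X',Z'\times T,X,Z\times T$, using that ${\rm codim}(X)={\rm codim}(X')$ by flatness of $\pi:X\to U$. Lemma \ref{lem:pure codimensionality} then forces $M$ to be an integrable connection near $Z'$, contradicting $Z'\subset{\rm Sing}(M)$.

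Having recorded that ${\rm Sing}^1(M)={\rm Sing}(M)$, the corollary is immediate from the displayed chain of inclusions. The one point worth stating explicitly in the write-up is the passage from the ``codimension one part'' statements of Lemma \ref{lem:Euler discriminant and singular locus} to the full sets: this is exactly where the pure codimensionality input is used, and no further work is needed. There is essentially no new obstacle here; the content is entirely inherited from Theorem \ref{thm:pure codimensionality}, and the only thing to be careful about is keeping the genericity of $\nu$ uniform, which is harmless since finitely many of the cited results (Lemma \ref{lem:Euler discriminant and singular locus}, \eqref{eq:10}, and the local cohomology computation) each hold for $\nu$ outside a proper closed subset of $\C^n$.
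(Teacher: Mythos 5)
Your proposal is correct and follows exactly the paper's route: the paper derives this corollary from the chain ${\rm Sing}^1(M)=\nabla_\chi^\pi(Z)^1\subset\nabla_\chi^\pi(Z)\subset{\rm Sing}(M)$ (Lemma \ref{lem:Euler discriminant and singular locus} and \eqref{eq:10}) together with the identity ${\rm Sing}^1(M)={\rm Sing}(M)$ established in the proof of Theorem \ref{thm:pure codimensionality}. No gaps; your handling of the genericity of $\nu$ and the collapse of the inclusion chain matches the intended argument.
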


\if0
\begin{corollary}
    \textcolor{red}{Bifurcation locus is 1-codimensional.}
\end{corollary}
\fi

\subsection{Singular locus and the singularity of a solution}\label{subsec:SL and singularity}

\begin{proposition}\label{prop:singular solution}
    Assume $\dim_{\C}Z=1$.
    Let $M$ be a regular holonomic $D_Z$-module and $0\in {\rm Sing}(M)$.
    Let $\iota_0:\{ 0\}\hookrightarrow Z$ be the canonical inclusion.
    Assume $L^{-1}\iota_0^*M=0$ and the holonomic rank of $M$ is at least one.
    Then, there exists a holomorphic solution of $M$ which is not analytically continued to a neighborhood of the origin.
\end{proposition}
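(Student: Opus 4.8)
Since the statement is local at $0$ and $\dim_{\C}Z=1$, the plan is to replace $Z$ by a small disc $\Delta$ with coordinate $t$ centred at $0$, shrunk so that ${\rm Sing}(M)\cap\Delta=\{0\}$, and write $j\colon\Delta^{*}=\Delta\setminus\{0\}\hookrightarrow\Delta$. The first point to record is that the hypothesis $L^{-1}\iota_{0}^{*}M=0$ says exactly that $t$ acts injectively on $M$; equivalently, $\mathbb{R}\Gamma_{\{0\}}(M)$ is concentrated in degree $1$, the localisation morphism $M\hookrightarrow N:=M[t^{-1}]=\int_{j}j^{*}M$ is injective, and $M$ has no $\mathcal{D}_{\Delta}$-submodule supported at $0$. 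This is precisely the input that rules out the pathology in which $0\in{\rm Sing}(M)$ only because of a submodule supported at $0$, which is invisible to $\mathcal{H}om_{\mathcal{D}}(-,\OO)$. On $\Delta^{*}$ the module $M$ is an integrable connection of rank $r$, where $r\geq1$ is the holonomic rank of $M$, and its sheaf of holomorphic solutions $\mathcal{H}om_{\mathcal{D}_{\Delta}}(M,\OO_{\Delta})$ restricts on $\Delta^{*}$ to a rank-$r$ local system $\mathcal{L}$.

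The core of the proof is the inequality $\dim_{\C}{\rm Hom}_{\mathcal{D}}(M,\OO_{0}^{\rm an})<r$. Set $Q:=N/M=H^{1}_{\{0\}}(M)$; it is holonomic and supported at $\{0\}$, hence $Q\simeq\delta_{0}^{\oplus k}$ for some integer $k\geq0$ by Kashiwara's equivalence, where $\delta_{0}:=\mathcal{B}_{\{0\}|\Delta}$. I would apply $\mathbb{R}\mathcal{H}om_{\mathcal{D}}(-,\OO_{\Delta})$ to the short exact sequence $0\to M\to N\to\delta_{0}^{\oplus k}\to0$ and pass to stalks at $0$, using two ingredients. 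First, $\mathbb{R}\mathcal{H}om_{\mathcal{D}}(N,\OO)_{0}=0$: since $N=\int_{j}j^{*}M$ and the solution functor intertwines $\int_{j}$ with the extension by zero $j_{!}$, one has $\mathbb{R}\mathcal{H}om_{\mathcal{D}}(N,\OO)=j_{!}\,\mathbb{R}\mathcal{H}om_{\mathcal{D}}(j^{*}M,\OO)$, whose stalk at $0$ vanishes. Second, from $\delta_{0}=\mathcal{D}_{\Delta}/\mathcal{D}_{\Delta}t$ one reads off $\mathcal{H}om_{\mathcal{D}}(\delta_{0},\OO)_{0}=0$ and $\mathcal{E}xt^{1}_{\mathcal{D}}(\delta_{0},\OO)_{0}=\C$. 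The resulting long exact sequence gives ${\rm Hom}_{\mathcal{D}}(M,\OO_{0}^{\rm an})\simeq\C^{k}$. On the other hand, characteristic cycles are additive in short exact sequences, $CC(\delta_{0})=[T^{*}_{0}\Delta]$, and the multiplicity of $N$ along $T^{*}_{0}\Delta$ equals $r$ (apply Kashiwara's index theorem \cite{kashiwara1973index} to $N$ at $0$, using $\mathbb{R}\mathcal{H}om_{\mathcal{D}}(N,\OO)_{0}=0$), so that $CC(M)=r\,[T^{*}_{\Delta}\Delta]+(r-k)\,[T^{*}_{0}\Delta]$. Since $0\in{\rm Sing}(M)$, the multiplicity $r-k$ is at least $1$, hence $k<r$ and $\dim_{\C}{\rm Hom}_{\mathcal{D}}(M,\OO_{0}^{\rm an})=k<r$.

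Finally I would extract the singular solution. Fix a base point $t_{0}\in\Delta^{*}$. The restriction map ${\rm Hom}_{\mathcal{D}}(M,\OO_{0}^{\rm an})\to\mathcal{L}_{t_{0}}$, sending a germ of solution at $0$ to its germ at $t_{0}$, is injective by the identity theorem, so its image has dimension $k<r=\dim_{\C}\mathcal{L}_{t_{0}}$; choose $u\in\mathcal{L}_{t_{0}}$, the germ of a (possibly multivalued) holomorphic solution of $M$, outside this image. Presenting $M$ near $0$ as a cyclic module $\mathcal{D}_{\Delta}/I$ for a left ideal $I$ (recall that a holonomic $\mathcal{D}$-module on a curve is locally cyclic) and realising $u$ as the function obtained by evaluating the corresponding solution on the cyclic generator, I claim $u$ admits no analytic continuation to a holomorphic function on any neighbourhood $W$ of $0$: such a continuation $\tilde u\in\OO(W)$ would be annihilated by $I$ on $W\setminus\{0\}$, hence on all of $W$ by holomorphy and the identity theorem, and would therefore define a solution of $M$ on $W$ with germ $u$ at $t_{0}$, contradicting the choice of $u$. (When the monodromy of $\mathcal{L}$ is nontrivial one may simply take $u$ non-invariant, in which case this records the evident fact that a genuinely multivalued solution cannot be continued single-valuedly across $0$.) I expect the second paragraph to be the main obstacle --- computing $\dim_{\C}{\rm Hom}_{\mathcal{D}}(M,\OO_{0}^{\rm an})=k$ and linking $k$ with the conormal multiplicity so that the hypothesis $0\in{\rm Sing}(M)$ can be brought in --- and in the last step the delicate point is to upgrade ``$u$ does not extend as a solution'' to ``$u$ does not extend as a holomorphic function'', which is exactly where local cyclicity and the injectivity $M\hookrightarrow M[t^{-1}]$ (supplied by $L^{-1}\iota_{0}^{*}M=0$) enter.
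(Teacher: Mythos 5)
Your argument is correct, and it reaches the paper's crucial inequality $\dim_{\C}{\rm Hom}_{D_Z}(M,\mathcal{O}_0)<r$ by a genuinely different route. The paper's proof is two lines: by regularity and \cite[Remark~4.16]{kashiwara2003d} one has ${\rm Hom}_{\C}(\iota_0^*M,\C)\simeq{\rm Hom}_{D_Z}(M,\widehat{\mathcal{O}_0})={\rm Hom}_{D_Z}(M,\mathcal{O}_0)$, so under $L^{-1}\iota_0^*M=0$ this dimension equals the local index $\chi_0(M)$, which is smaller than the rank by the same Kashiwara-index argument as in Lemma \ref{lem:5.6}. You instead use the localization sequence $0\to M\to M[t^{-1}]\to\delta_0^{\oplus k}\to 0$ (injectivity supplied by $L^{-1}\iota_0^*M=0$, the form of the cokernel by Kashiwara's equivalence), compute ${\rm Hom}_{D}(M,\mathcal{O}_0)\simeq\mathcal{E}xt^1(\delta_0^{\oplus k},\mathcal{O})_0=\C^k$ from the stalkwise vanishing of the solution complex of $\int_j j^*M$ at $0$, and then identify $k=r-m$, where $m$ is the multiplicity of $T^*_0Z$ in $CC(M)$, via additivity of characteristic cycles and the index theorem applied to $M[t^{-1}]$; the hypothesis $0\in{\rm Sing}(M)$ gives $m\geq 1$, hence $k<r$. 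Both arguments rest on the same pillars (regularity, Kashiwara's index theorem, and $L^{-1}\iota_0^*M=0$), but they are packaged differently: the paper's comparison of formal and convergent solutions does all the work at once and is shorter, while your version is more self-contained, makes the role of each hypothesis geometrically transparent ($L^{-1}\iota_0^*M=0$ excludes submodules supported at $0$; $0\in{\rm Sing}(M)$ is exactly positivity of the conormal multiplicity), and it spells out the final extraction of a non-continuable solution, which the paper leaves implicit. Do note that regularity enters your proof through the identification $Sol(\int_j j^*M)\simeq j_!\,Sol(j^*M)$, which fails for irregular modules, so it deserves an explicit mention.

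One small imprecision in the last step: choosing $u$ outside the image of the restriction map attached to one fixed path only rules out continuation along that path; if the monodromy is nontrivial, continuation along another path could a priori land in a translated copy of the image. This is harmless: the germs at $t_0$ that extend along some path lie in a countable union of monodromy translates of a $k$-dimensional subspace of the $r$-dimensional stalk, which cannot exhaust it since $k<r$; alternatively, fix the path once and for all in the statement of non-continuability. Either fix is one line, so this is a matter of bookkeeping rather than a gap.
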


\begin{proof}
    It follows from \cite[Remark~4.16]{kashiwara2003d} and the regularity of $M$ that
    $Hom_{\C}(\iota_0^*M,\C)\,\simeq \,Hom_{D_Z}(M,\widehat{\mathcal{O}_0})\,=\,Hom_{D_Z}(M,\mathcal{O}_0)$.
    Now, the assumption $L^{-1}\iota_0^*M=0$ and the proof of Lemma \ref{lem:5.6} imply that $Hom_{D_Z}(M,\mathcal{O}_0)$ has a smaller dimension than the holonomic rank of $M$.
\end{proof}

Next, we consider the case when $Z$ has a higher dimension.
Let $M$ be a holonomic $D_Z$-module.
Let $\mathcal{O}_{Z}^{an}$ be the sheaf of analytic functions on $Z$.
The solution sheaf $Sol(M):=Hom_{D_Z}(M,\mathcal{O}_{Z}^{an})$ is a constructible sheaf and we write $S(M)$ for the set of points $z$ at which $Sol(M)$ fails to be a local system.
In the notation of \cite[Definition 4.1.11]{dimca2004sheaves}, $S(M)$ is nothing but ${\rm Sing}(Sol(M))$, which is an analytic variety by \cite[Proposition 4.1.12]{dimca2004sheaves}.
Analytically, $S(M)$ is the locus where a solution to $M$ fails to be holomorphic.

\begin{proposition}\label{prop:singular set}
    Let $M$ be a regular holonomic $D_Z$-module.
    Assume $L^{-1}\iota_z^*M=0$ for any $z\in Z$ and the holonomic rank of $M$ is at least one.
    Then, the following identity holds:
    \begin{equation}
        {\rm Sing}^1(M)\,=\,S(M).
    \end{equation}
\end{proposition}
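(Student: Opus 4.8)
\textbf{Proof proposal for Proposition \ref{prop:singular set}.}

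The plan is to deduce the higher-dimensional statement from the one-dimensional case treated in Proposition \ref{prop:singular solution}, using a curve-slicing argument together with the fact that $\mathrm{Sing}^1(M)$ carries ``codimension-one information'' that can be detected along a generic transversal curve. First I would establish the inclusion $\mathrm{Sing}^1(M)\subset S(M)$. Since the solution sheaf $Sol(M)$ is constructible, $S(M)$ is a closed analytic subvariety, and outside $S(M)$ the sheaf $Sol(M)$ is a local system of rank equal to the holonomic rank of $M$; in particular $M$ is an integrable connection on $Z\setminus S(M)$, so $\mathrm{Sing}(M)\subset S(M)$ and a fortiori $\mathrm{Sing}^1(M)\subset S(M)$. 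For the reverse inclusion, let $z_0$ be a smooth point of an irreducible component $Y$ of $S(M)$. I would first note that $Y\subset \mathrm{Sing}(M)$: on $Z\setminus \mathrm{Sing}(M)$ the module $M$ is an integrable connection, hence $Sol(M)$ is a local system there, so $S(M)\subset\mathrm{Sing}(M)$. Thus it suffices to show that no component of $S(M)$ has codimension $\geq 2$, equivalently that every component of $S(M)$ is already a component of $\mathrm{Sing}^1(M)$.

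The heart of the argument is therefore: if $Y$ is a component of $S(M)$ of codimension $\geq 2$, derive a contradiction. Pick a smooth point $z_0\in Y$ not lying on any other component of $S(M)$ nor on the singular locus of $Y$, and choose a smooth curve $\Gamma$ through $z_0$ meeting $S(M)$ only at $z_0$ and transversal to $Y$ at $z_0$ in the sense that $T^*_{z_0}\Gamma\cap T^*_Y Z\subset T^*_Z Z$, exactly as in the curve-selection step used in the (commented-out) proof of Lemma \ref{lem:5.6} via \cite[Theorem 4.7]{kashiwara2003d}. With such a generic $\Gamma$, the non-characteristic condition gives $L^i\iota_\Gamma^* M = 0$ for $i\neq 0$, where $\iota_\Gamma:\Gamma\hookrightarrow Z$; write $N:=\iota_\Gamma^* M$, a regular holonomic $D_\Gamma$-module whose holonomic rank equals that of $M$ (by Cauchy--Kovalevskaya--Kashiwara), hence $\geq 1$. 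Moreover, since $L^{-1}\iota_{z_0}^* M = 0$ by hypothesis and $\iota_{z_0}$ factors as $\{z_0\}\hookrightarrow\Gamma\hookrightarrow Z$ with $\Gamma$ non-characteristic, one gets $L^{-1}\iota_{z_0}^{*,\Gamma}N=0$. Now I claim $z_0\notin\mathrm{Sing}(N)$: if $Y$ has codimension $\geq 2$ it cannot contribute to the characteristic cycle a conormal whose projection drops dimension along $\Gamma$ — more precisely, because $\Gamma$ was chosen non-characteristic for the part of $\mathrm{Char}(M)$ sitting over components of codimension $\geq 2$, the restriction $N$ is smooth (an integrable connection) near $z_0$. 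Then $Sol(N)$ is a local system near $z_0$, so every holomorphic solution of $N$ near $z_0$ extends; by the pullback compatibility $Sol(M)|_\Gamma \simeq Sol(N)$ near $z_0$ (again from non-characteristicity), germs of solutions of $M$ along $\Gamma$ extend across $z_0$, and since $\Gamma$ was generic this forces $z_0\notin S(M)$, contradicting $z_0\in Y\subset S(M)$. Hence no component of $S(M)$ has codimension $\geq 2$, i.e. $S(M)\subset\mathrm{Sing}^1(M)$, completing the proof.

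The main obstacle I anticipate is the rigorous treatment of the curve-slicing: ensuring that a generic curve $\Gamma$ is simultaneously non-characteristic for $M$, that the pullback $\iota_\Gamma^*M$ remains regular holonomic with $L^{-1}\iota_{z_0}^*$ of the composite vanishing, and — most delicately — that the only way $\Gamma$ can detect a point of $S(M)$ is if that point lies on a \emph{codimension-one} component of $\mathrm{Sing}(M)$. This last point is essentially the statement that the singular support of the constructible sheaf $Sol(M)$ restricted to $\Gamma$ sees only the codimension-one strata, which should follow from the microlocal Bertini-type genericity of $\Gamma$ (choosing $\Gamma$ non-characteristic for the conormals $T^*_W Z$ of all strata $W$ with $\mathrm{codim}\,W\geq 2$, which is possible since those conormals have dimension $<\dim Z$ and hence meet a generic $T^*_\Gamma Z$ only in the zero section). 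I would also need the standard fact, used implicitly above, that for a regular holonomic module a point is in $S(M)$ iff $L^{-1}\iota_z^*M\neq 0$ or the local solution count drops — here the hypothesis $L^{-1}\iota_z^*M=0$ everywhere streamlines this to a pure rank-drop criterion, which is what makes the argument via Proposition \ref{prop:singular solution} go through cleanly along $\Gamma$.
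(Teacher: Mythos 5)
Your proposal has a genuine gap, and it sits exactly in the direction that carries the content of the proposition, namely ${\rm Sing}^1(M)\subset S(M)$. You dispose of this direction by claiming that, since $Sol(M)$ is a local system of full rank on $Z\setminus S(M)$, the module $M$ must be an integrable connection there, whence ${\rm Sing}(M)\subset S(M)$. That implication is false: $Sol(M)=\mathcal{H}om_{D_Z}(M,\mathcal{O}_Z^{an})$ is only the degree-zero solution sheaf and cannot detect parts of $M$ whose solution complex lives in higher degrees. Concretely, $M=\mathcal{O}_{\C^2}\oplus H^2_{[\{0\}]}(\mathcal{O}_{\C^2})$ is regular holonomic, satisfies all hypotheses of the proposition ($L^{-1}\iota_z^*M=0$ for every $z$, holonomic rank one), and has $Sol(M)\simeq\C_{\C^2}$, a rank-one local system on all of $\C^2$, yet it is not an integrable connection at the origin; so ${\rm Sing}(M)\not\subset S(M)$. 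Moreover, the inclusion ${\rm Sing}^1(M)\subset S(M)$ is precisely the assertion that along every codimension-one component of the singular locus some actual solution fails to continue, and it is false without the hypotheses (for $\mathcal{O}_{\C}\oplus H^1_{[\{0\}]}(\mathcal{O}_{\C})$ one has ${\rm Sing}^1=\{0\}$ but $S=\varnothing$); your argument for this direction never uses $L^{-1}\iota_z^*M=0$ nor the rank bound, which is the tell-tale sign that it cannot be right. The paper proves this inclusion by taking a generic point $z$ of ${\rm Sing}^1(M)$, slicing with a smooth curve $\Gamma$ transverse to ${\rm Sing}(M)$, using Cauchy--Kovalevskaya--Kashiwara to identify $\iota_\Gamma^*\mathcal{H}om_{D_Z}(M,\mathcal{O}_Z)$ with $\mathcal{H}om_{D_\Gamma}(\iota_\Gamma^*M,\mathcal{O}_\Gamma)$ and to transport the $L^{-1}$-vanishing to the slice, and then invoking Proposition \ref{prop:singular solution} (a local index/rank-drop argument) to produce a solution singular at $z$.

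For the other inclusion, where you invest all the effort, the paper's route is much shorter and you should adopt it: $S(M)\subset{\rm Sing}(M)$ is elementary, and Hartogs' extension theorem shows directly that $S(M)$ has no component of codimension $\geq 2$ (solutions on a punctured neighborhood of such a component are tuples of holomorphic functions, extend by Hartogs, and remain solutions), giving $S(M)\subset{\rm Sing}^1(M)$. Your curve-slicing substitute for this step is itself problematic: a curve through a point $z_0$ of a codimension-$\geq 2$ subvariety $Y$ can never be non-characteristic for $T^*_YZ$ at $z_0$, since the conormal fiber of the curve is a hyperplane while that of $Y$ has dimension $\geq 2$, so they meet outside the zero section; thus the ``microlocal Bertini'' genericity you invoke is impossible exactly where you need it. And even granting good restriction behavior, extendability of solutions along generic curves does not by itself yield local constancy of $Sol(M)$ in a full neighborhood of $z_0$, which is what membership in $S(M)$ is about; Hartogs handles this directly.
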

\begin{proof}
    From the classical Hartog's theorem in complex analysis (\cite[Chapter 2, Theorem 5B]{WhitneyAnalyticVarieties}), it follows that $S(M)$ does not have any codimension 2 components. 
    Therefore, we have the inclusion ${\rm Sing}^1(M)\,\supset\,S(M)$. 
    Let $z$ be a generic point of ${\rm Sing}^1(M)$.
    It is enough to prove that $z\in S(M)$ to show the other inclusion ${\rm Sing}^1(M)\,\subset\,S(M)$.
    We take a (germ of a) smooth curve $\iota_\Gamma:\Gamma\hookrightarrow Z$ which passes throgh $z$ and is transverse to ${\rm Sing}(M)$.
    By Cauchy-Kowalewsky-Kashiwara's theorem, it follows that $\iota_\Gamma^*\mathcal{H}om_{D_Z}(M,\mathcal{O}_{Z})=\mathcal{H}om_{D_\Gamma}(\iota_\Gamma^*M,\mathcal{O}_{\Gamma})$ and that $L^j\iota_\Gamma^*M=0$ unless $j=0$.
    Let $i_z:\{ z\}\hookrightarrow \Gamma$ denotes the natural inclusion.
    Then, $L^{-1}i_z^*\iota_\Gamma^*M=L^{-1}\iota_z^{*}M=0$.
    Thus, $\iota_\Gamma^*M$ satisfies the assumption of Proposition \ref{prop:singular solution}.
    Therefore, there exists an element $f\in \mathcal{H}om_{D_\Gamma}(\iota_\Gamma^*M,\mathcal{O}_{\Gamma})=\iota_\Gamma^*\mathcal{H}om_{D_Z}(M,\mathcal{O}_{Z})$ which is singular at $z$.
    This proves that $z\in S(M)$.
\end{proof}

Summing up Theorem \ref{thm:pure codimensionality}, Corollary \ref{cor:ED=Sing} and Proposition \ref{prop:singular set}, we obtain the following theorem.
\begin{theorem}\label{thm:ED=Sing2}
For a generic $\nu\in\C^n$, one has the following identity:
    $$
    \nabla_\chi^\pi(Z)={\rm Sing}(M^{\rm hyp}_\pi(\nu))=S(M^{\rm hyp}_\pi(\nu)).
    $$
\end{theorem}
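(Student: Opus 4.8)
The plan is to assemble the three principal results already established in this section. First I would invoke Corollary \ref{cor:ED=Sing}, which for generic $\nu\in\C^n$ already gives the left equality $\nabla_\chi^\pi(Z)={\rm Sing}(M^{\rm hyp}_\pi(\nu))$; this in turn rests on Theorem \ref{thm:pure codimensionality} (pure one-codimensionality, so that ${\rm Sing}^1=\,{\rm Sing}$) together with Lemma \ref{lem:Euler discriminant and singular locus} and inclusion \eqref{eq:10}. So the only new content is the right equality ${\rm Sing}(M^{\rm hyp}_\pi(\nu))=S(M^{\rm hyp}_\pi(\nu))$, and for that I would apply Proposition \ref{prop:singular set} to $M=M^{\rm hyp}_\pi(\nu)$.

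To use Proposition \ref{prop:singular set} I must check its three hypotheses. (i) \emph{Regular holonomicity}: $\mathcal{B}_{X|Z\times T}$ is regular holonomic (it is a local cohomology of $\mathcal{O}_{Z\times T}$ along a closed subvariety), $\mathcal{L}_\nu$ underlies a regular connection, and regular holonomicity is preserved under the partial Mellin transform / direct image $H^0\int_{\pi_Z}(-)x^\nu$ combined with Lemma \ref{lem:relative_TCLI}; since $\pi_Z$ is an affine morphism and the de Rham complex of Theorem \ref{thm:relative_TCLI} is concentrated in one degree, the Riemann--Hilbert side is $\mathbb{R}\pi_{Z*}\iota_{X*}\iota_X^!\mathcal{L}_\nu$, a regular holonomic complex concentrated in degree $0$. (ii) \emph{Vanishing} $L^{-1}\iota_z^*M=0$ for all $z\in Z$: by the base change formula, $L^\bullet\iota_z^*M^{\rm hyp}_\pi(\nu)=\int_{\pi_z}\mathcal{B}_{X_z|T}x^\nu$, i.e. the twisted de Rham cohomology of the very affine locally complete intersection fiber $X_z$ with coefficients in $\mathcal{L}_{-\nu}$, shifted by $\dim X_z$; for generic $\nu$ this is concentrated in degree $0$ by the Artin-type vanishing for very affine varieties — exactly the content of the commented-out Lemma~\ref{lem:vanishing} (Batyrev/Huh-style vanishing, cf. the $\chi_z(M^{\rm hyp}_\pi(\nu))=\chi_z\ge 0$ computation already displayed in \S\ref{subsec:ED and SL}, which forces the cohomology in negative degrees to vanish once one knows $\chi_z\ge 0$ and the alternating sum has a single sign). (iii) \emph{Holonomic rank $\ge 1$}: the generic rank is $\chi^*$, the generic signed Euler characteristic, which is positive as soon as $X$ (equivalently the generic fiber) is non-empty — and if the generic fiber were empty the whole discussion is vacuous.

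With all three hypotheses verified, Proposition \ref{prop:singular set} yields ${\rm Sing}^1(M^{\rm hyp}_\pi(\nu))=S(M^{\rm hyp}_\pi(\nu))$, and Theorem \ref{thm:pure codimensionality} upgrades ${\rm Sing}^1$ to ${\rm Sing}$, completing the chain of equalities. The main obstacle I anticipate is a clean justification of hypothesis (ii) for \emph{all} $z\in Z$ simultaneously with a \emph{single} generic $\nu$: one needs the stratified-family construction sketched in the commented-out Lemma \ref{lem:vanishing} (a sequence $U=U_n\supset\cdots\supset U_0$ with relative normal-crossing compactifications over each stratum) to produce finitely many nonzero linear forms $\ell_{ij}(\nu)$ such that avoiding $\bigcup_{ij}\ell_{ij}^{-1}(\Z)$ guarantees the vanishing on every fiber at once. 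Everything else is bookkeeping with base change and the functoriality of regular holonomicity.
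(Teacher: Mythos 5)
Your proposal follows essentially the same route as the paper, which proves the theorem in one line by combining Theorem \ref{thm:pure codimensionality}, Corollary \ref{cor:ED=Sing}, and Proposition \ref{prop:singular set}. Your additional verification of the hypotheses of Proposition \ref{prop:singular set} (regular holonomicity, the vanishing $L^{-1}\iota_z^*M=0$ via the stratified construction with the linear forms $\ell_{ij}(\nu)$, and positivity of the rank) is exactly the bookkeeping the paper leaves implicit, so the argument is correct and matches the paper's approach.
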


\section{Euler integral and $\hbar$-deformation}\label{sec:Euler integral}

\subsection{Annihilating ideal of the Euler integral}\label{subsec:3.1}
From this section, we assume that $Z$ is affine.
We consider a non-zero polynomial $f(x;z)\in \mathcal{O}_{Z\times T^n}(Z\times T^n)$.
It has an expansion of the form $f(x;z)=\sum_{\alpha\in A}c_\alpha(z)x^\alpha$, where $c_\alpha(z)\in\mathcal{O}_Z(Z)$.
Let us set $\C^*={\rm Spec}\,\C[y^{\pm 1}]$.
We define the hypersurface $X$ as the vanishing locus of $1-yf(x;z).$
The projection $\pi:X\to Z$ factors as $X\to U\to Z$.
Here, $Z\setminus U$ is $\bigcup_{\alpha\in A}\bigcap_{\beta\neq\alpha}V_Z(c_\beta)$ and $V_Z(c_\beta)$ denotes the vanishing locus of $c_\beta$ on $Z$.
The morphism $X\to U$ is a smooth morphism.
In this construction, we know from \cite[Theorem 3.1]{fevola2024principal} that an identity
$
\nabla_\chi^\pi(Z)=\{ z\in Z\mid \chi_z<\chi^*\}
$
holds.
In the following, we set $T:=T^{n+1}={\rm Spec}\,\C[y^{\pm1},x_1^{\pm1},\dots,x_n^{\pm1}]$.
The algebraic local cohomology group $\mathcal{B}_{X|Z\times T}$ has a presentation $D_{Z\times T}/\mathcal{I}$, where $\mathcal{I}$ is an ideal generated by the following operators:
\begin{equation}
1-yf(x;z),\ \ \ x_j\partial_{x_j} - y\partial_y \, yx_j\frac{\partial f}{\partial x_j}(x;z)\ (j=1,\dots,n),\ \ \  \partial_z-y\partial_yy{\partial_z f}(x;z),
\end{equation}
where $j=1,\dots,n$ and $\partial_z$ is any local vector field on $Z$.
We consider $n+1$ parameters $\nu_0,\dots,\nu_n$.
The hypergeometric $D$-module $M_\pi^{\rm hyp}$ (resp. $M_\pi^{\rm hyp}(\nu)$) associated to this data is denoted by $M^{\rm hyp}$ (resp. $M^{\rm hyp}(\nu)$).
In the ring $\DD$, we can define a left ideal $J$ generated by  the following operators over the field $K$:
\begin{equation}\label{eq:generators of J}
    1-\s_{s}f(\s_\nu;z),\ \ \ \nu_j - \nu_0 \, \s_{\nu_0}\s_{\nu_j}\frac{\partial f}{\partial x_j}(\s_{\nu};z),\ \ \ \partial_z+\nu_0\s_{\nu_0}{\partial_z f}(\s_{\nu};z)
\end{equation}
Here, $\partial_z$ runs over the space of algebraic vector fields on $Z$.

It is convenient to use a de Rham complex representation of $M^{\rm hyp}$ different from \eqref{eq:de Rham complex}.
By abuse of notation, we set
$$
\relOmega{k}=\sum_{I=(i_1,\dots,i_k)}\mathcal{O}_{Z_K\times T_K}\left[\frac{1}{f(x;z)}\right]dx^I,
\ \ \ 
\nabla_\omega={\rm d}_x+\omega\wedge:\relOmega{k}\to\relOmega{{k+1}},
$$
where $\omega=-\nu_0\frac{d_xf}{f}+\sum_{i=1}^n\nu_i\frac{dx_i}{x_i}$.
This gives rise to a cochain complex $(\relOmega{\bullet}, \nabla_\omega) : \, 0 \to\relOmega{0}\overset{\nabla_\omega}{\longrightarrow} \relOmega{1} \overset{\nabla_\omega}{\longrightarrow} \cdots \overset{\nabla_\omega}{\longrightarrow} \relOmega{n}\to 0$ of which the $n$-th cohomology is denoted by $H^n((X/Z)_K,\omega)$.

\begin{proposition}\label{prop:3.1}
    There is an isomorphism $M^{\rm hyp}\simeq H^n((X/Z)_K,\omega)$ of $\DD$-modules such that $[1]\in M^{\rm hyp}$ is sent to a cohomology class represented by $\left[\frac{dx_1}{x_1}\wedge\cdots\wedge \frac{dx_n}{x_n}\right]$. 
\end{proposition}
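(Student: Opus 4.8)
The plan is to eliminate the auxiliary variable $y$ and reduce the claim to Theorem~\ref{thm:relative_TCLI}. Geometrically this amounts to identifying the hypersurface $X=\{1-yf=0\}\subset Z\times T$ (here $T=\C^{*}\times T^{n}$, $\C^{*}=\mathrm{Spec}\,\C[y^{\pm1}]$, $T^{n}=\mathrm{Spec}\,\C[x_{1}^{\pm1},\dots,x_{n}^{\pm1}]$) with the basic open set $W:=(Z\times T^{n})\setminus V(f)$ through the graph isomorphism $p\colon X\xrightarrow{\ \sim\ }W$, $(y,x;z)\mapsto(x;z)$ with inverse $(x;z)\mapsto(1/f,x;z)$, and pushing the hypergeometric module forward along the factorization $\pi_{Z}\circ\iota=q\circ j\circ p$, where $\iota\colon X\hookrightarrow Z\times T$ is the closed immersion, $j\colon W\hookrightarrow Z\times T^{n}$ the affine open immersion, and $q\colon Z\times T^{n}\to Z$ the projection.

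First I would record two standard facts. Since $d(1-yf)=-f\,dy-y\,d_{x}f-y\,d_{z}f$ and $f=1/y$ is invertible on $X$, the hypersurface $X$ is smooth, so by Kashiwara's equivalence $\mathcal B_{X|Z\times T}\simeq\iota_{+}\mathcal O_{X}$, with the cyclic generator $[1]$ corresponding to $1\in\mathcal O_{X}$. Next, writing $x_{0}:=y$ and letting $L$ be the rank-one $\mathcal O_{(Z\times T)_{K}}$-connection with $\nabla_{L}(1)=\bigl(\sum_{i=0}^{n}\nu_{i}\tfrac{dx_{i}}{x_{i}}\bigr)\otimes 1$, Theorem~\ref{thm:relative_TCLI} identifies $M^{\rm hyp}=\mathfrak M(\mathcal B_{X|Z\times T})$ with the relative twisted de Rham cohomology of $\mathcal B_{X|Z\times T}\otimes_{\mathcal O}L$ along the affine projection $\pi_{Z}$, concentrated in a single cohomological degree, compatibly with the $\DD$-module structure.

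For the reduction itself, the projection formula for the closed immersion $\iota$ gives $\iota_{+}\mathcal O_{X_{K}}\otimes_{\mathcal O}L\simeq\iota_{+}(\iota^{*}L)$, and $\iota^{*}L$ transported through $p$ is the rank-one connection $L'$ on $W_{K}$ whose connection relative to $Z_{K}$ equals $d_{x}+\omega\wedge$ with $\omega=-\nu_{0}\tfrac{d_{x}f}{f}+\sum_{i=1}^{n}\nu_{i}\tfrac{dx_{i}}{x_{i}}$ --- exactly the form in the statement --- because $y=1/f$ on $X$, hence $\tfrac{dy}{y}=-\tfrac{df}{f}$ there. Using functoriality of direct images and $p_{+}p^{*}=\mathrm{id}$, this yields $M^{\rm hyp}\simeq\int_{q}\int_{j}(\mathcal O_{W_{K}}\otimes L')$. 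Since $j$ is an affine open immersion, $\int_{j}(\mathcal O_{W_{K}}\otimes L')$ is the meromorphic extension $\bigl(\mathcal O_{(Z\times T^{n})_{K}}[\tfrac1f],\nabla_{\omega}\bigr)$, which is precisely $\relOmega{0}$; and since $q$ is affine, $\int_{q}$ of it is computed by the relative complex $(\relOmega{\bullet},\nabla_{\omega})$, whose cohomology is concentrated in top degree (from Theorem~\ref{thm:relative_TCLI}(2) transported through the above isomorphisms, or directly from Lemma~\ref{lem:relative_TCLI}) and equals $H^{n}((X/Z)_{K},\omega)$. This gives the isomorphism $M^{\rm hyp}\simeq H^{n}((X/Z)_{K},\omega)$; tracking the cyclic generator $[1]$ through the projection formula, through $p$, and through $j_{+}$ sends it to the distinguished section of $\mathcal O_{W_{K}}$ tensored with the trivializing section of $L'$, i.e.\ to $\bigl[\tfrac{dx_{1}}{x_{1}}\wedge\cdots\wedge\tfrac{dx_{n}}{x_{n}}\bigr]$. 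To pin this normalization on the nose, the cleanest route is to check that the operators in \eqref{eq:generators of J} annihilate $\bigl[\tfrac{dx_{1}}{x_{1}}\wedge\cdots\wedge\tfrac{dx_{n}}{x_{n}}\bigr]$: these are the contiguity identities --- e.g.\ the first one holds because the shift operator attached to $y=x_{0}$ acts on this ($\nu$-independent) representative by multiplication by $1/f$, and the last one holds because the relative $Z$-connection on $L'$ contributes the term $-\nu_{0}\tfrac{\partial_{z}f}{f}$.

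The step I expect to be the main obstacle is purely bookkeeping: propagating the full $\DD$-module structure --- especially the shift operators $\s_{0},\dots,\s_{n}$, which on $H^{n}((X/Z)_{K},\omega)$ act by shifting $\nu_{i}$ and multiplying by $x_{i}$ for $i\geq 1$ and by shifting $\nu_{0}$ and multiplying by $1/f$ for $i=0$ --- through the projection formula and the graph isomorphism $p$, and confirming that the resulting isomorphism carries $[1]$ to $\bigl[\tfrac{dx_{1}}{x_{1}}\wedge\cdots\wedge\tfrac{dx_{n}}{x_{n}}\bigr]$ itself rather than to a $K^{\times}$-multiple of it. The other ingredients --- smoothness of $X$ and Kashiwara's equivalence, the projection formula for a closed immersion, functoriality and exactness of direct images for closed and affine open immersions, and the degree concentration of Theorem~\ref{thm:relative_TCLI} --- are standard.
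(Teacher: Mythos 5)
Your argument is correct and follows essentially the same route as the paper's proof: smoothness of $X$ plus Kashiwara's equivalence to replace $\mathcal{B}_{X|Z\times T}$ by a direct image from $X$, restriction of the twist $y^{\nu_0}x^{\nu}$ to $X$ (where $y=1/f$) to get $f^{-\nu_0}x^{\nu}$, and computation of the pushforward along the affine map to $Z_K$ by the relative twisted de Rham complex $(\relOmega{\bullet},\nabla_\omega)$ with concentration in top degree via Theorem \ref{thm:relative_TCLI}. Your extra bookkeeping (projection formula, the graph isomorphism onto the complement of $V(f)$, and checking that the generators \eqref{eq:generators of J} annihilate $\left[\frac{dx_1}{x_1}\wedge\cdots\wedge\frac{dx_n}{x_n}\right]$ to pin the image of $[1]$) just spells out what the paper leaves implicit.
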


\begin{proof}
Since $X$ is a smooth hypersurface, we obtain a sequence of isomorphisms:
$$
\mathfrak{M}(\mathcal{B}_{X|Z})\simeq\int_\pi\int_{i_{X_K}}i_{X_K}^\dagger\mathcal{O}_{Z_K\times T_K}y^{\nu_0}x^{\nu}[1]=\int_{\pi\circ i_{X_K}}\mathcal{O}_{X_K}f^{-\nu_0}x^\nu.
$$
Here, we set $x^\nu:=x^{\nu_1}\cdots x^{\nu_n}$.
The last object is represented by a shift of the complex $(\relOmega{\bullet}, \nabla_\omega)$ so that its zero-th cohomology group is $H^n((X/Z)_K,\omega)$.
\end{proof}

For a complex vector $(\nu_0,\dots,\nu_n)$, an integral of the form 
\begin{equation}\label{eq:Euler integral section 3}
\int_\Gamma f(x;z)^{-\nu_0}x^\nu\frac{dx_1}{x_1}\wedge\cdots\wedge \frac{dx_n}{x_n}    
\end{equation}
is called an {\it Euler integral}.
It is a period pairing between the twisted cohomology group and the twisted homology group.
For an elementary account, see \cite{matsubara2023four}.
Proposition \ref{prop:3.1} shows that $M^{\rm hyp}$ represents the $\DD$-module that annihilates the Euler integral.
There is also a cyclic $D_{Z_K}$-submodule ${N^{\rm hyp}}$ of $M^{\rm hyp}$ generated by $\left[\frac{dx_1}{x_1}\wedge\cdots\wedge \frac{dx_n}{x_n}\right]$.
Since ${M^{\rm hyp}}=\sum_{a\in\Z^{n+1}}\left( {N^{\rm hyp}}\right)^{(a)}$, it follows that ${\rm Sing}({N^{\rm hyp}})={\rm Sing}({M^{\rm hyp}})$.
For a fixed generic complex vector $\nu\in\C^{n+1}$, the corresponding $D_Z$-submodule ${N^{\rm hyp}}(\nu)$ of ${M^{\rm hyp}}(\nu)$ is well-defined and the identity ${\rm Sing}({N^{\rm hyp}}(\nu))={\rm Sing}({M^{\rm hyp}}(\nu))$ holds true.
For any $z\in Z$, there is a surjection $I:H_n(X_z;\mathcal{L}_\nu)\ni [\Gamma]\mapsto I_\Gamma:=([\xi]\mapsto \int_\Gamma f^{-\nu_0}x^\nu\xi)\in\mathcal{H}om_{D_Z}({N^{\rm hyp}}(\nu),\mathcal{O}_z)$.
By the definition of ${N^{\rm hyp}}(\nu)$, $I_\Gamma$ is determined by $I_\Gamma\left(\left[\frac{dx_1}{x_1}\wedge\cdots\wedge \frac{dx_n}{x_n}\right]\right)$.
The argument above combined with Theorem \ref{thm:ED=Sing2} proves the following Theorem

\begin{theorem}\label{thm:3.2}
    For a generic vector $\nu\in\C^{n+1}$, the variety ${\rm Sing}(M^{\rm hyp}(\nu))$ is precisely where an Euler integral \eqref{eq:Euler integral section 3} develops singularities for some integration contour $\Gamma$.
\end{theorem}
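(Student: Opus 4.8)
The plan is to deduce Theorem~\ref{thm:3.2} from Theorem~\ref{thm:ED=Sing2} by identifying the solution sheaf of the hypergeometric module with a sheaf of Euler integrals. First I would observe that all the modules involved are regular holonomic: $\mathcal{B}_{X|Z\times T}$ is the algebraic local cohomology of $\mathcal{O}_{Z\times T}$ along the smooth hypersurface $X=\{1-yf=0\}$, and by Proposition~\ref{prop:3.1} so are $M^{\rm hyp}$, its specialization $M^{\rm hyp}(\nu)$, and the $D_Z$-cyclic submodule $N^{\rm hyp}(\nu)\subset M^{\rm hyp}(\nu)$. Next, for generic $\nu$, I would record the fibrewise vanishing $L^{j}\iota_z^{*}N^{\rm hyp}(\nu)=0$ for $j\neq 0$ and every $z\in Z$ (the analogue of the vanishing used implicitly for $M^{\rm hyp}(\nu)$ in Theorem~\ref{thm:ED=Sing2}), so that Proposition~\ref{prop:singular set} applies to $N^{\rm hyp}(\nu)$ and yields $S(N^{\rm hyp}(\nu))={\rm Sing}^1(N^{\rm hyp}(\nu))$. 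Combining this with Theorem~\ref{thm:pure codimensionality}, with the identity ${\rm Sing}(N^{\rm hyp}(\nu))={\rm Sing}(M^{\rm hyp}(\nu))$ recorded after Proposition~\ref{prop:3.1}, and with Theorem~\ref{thm:ED=Sing2}, I obtain
\[
S(N^{\rm hyp}(\nu))={\rm Sing}^1(N^{\rm hyp}(\nu))={\rm Sing}(N^{\rm hyp}(\nu))={\rm Sing}(M^{\rm hyp}(\nu))=\nabla_\chi^\pi(Z).
\]

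It then remains to identify $S(N^{\rm hyp}(\nu))$ with the locus where some Euler integral \eqref{eq:Euler integral section 3} develops a singularity. Since $N^{\rm hyp}(\nu)$ is $D_Z$-cyclic with generator $\left[\frac{dx_1}{x_1}\wedge\cdots\wedge\frac{dx_n}{x_n}\right]$, any solution germ $\psi\in\mathcal{H}om_{D_Z}(N^{\rm hyp}(\nu),\mathcal{O}_z^{\rm an})$ is recovered from the single function $\psi\!\left(\left[\frac{dx_1}{x_1}\wedge\cdots\wedge\frac{dx_n}{x_n}\right]\right)$ by applying differential operators, so $\psi$ extends to a single-valued holomorphic germ at $z_0$ exactly when this function does; in particular $Sol(N^{\rm hyp}(\nu))$ fails to be a local system at $z_0$ precisely when, for a suitable section, the latter function cannot be so extended. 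By the surjection $I\colon H_n(X_z;\mathcal{L}_\nu)\twoheadrightarrow\mathcal{H}om_{D_Z}(N^{\rm hyp}(\nu),\mathcal{O}_z)$ recalled before the theorem, every solution germ equals $I_\Gamma$ for a locally constant family of twisted cycles $\Gamma=\Gamma_z$, and $I_\Gamma\!\left(\left[\frac{dx_1}{x_1}\wedge\cdots\wedge\frac{dx_n}{x_n}\right]\right)$ is exactly the Euler integral \eqref{eq:Euler integral section 3}. Hence $S(N^{\rm hyp}(\nu))$ is the set of $z_0$ for which some Euler integral \eqref{eq:Euler integral section 3}, built from a suitable contour family on a punctured neighbourhood of $z_0$, develops a singularity at $z_0$; together with the displayed chain of equalities and Theorem~\ref{thm:ED=Sing2}, this equals ${\rm Sing}(M^{\rm hyp}(\nu))$ and proves the theorem.

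The main technical point is the input needed to invoke Proposition~\ref{prop:singular set} for $N^{\rm hyp}(\nu)$ rather than for $M^{\rm hyp}(\nu)$: the vanishing $L^{-1}\iota_z^{*}M^{\rm hyp}(\nu)=0$ is the Artin-type vanishing $H^{\dim X_z-1}(X_z;\mathcal{L}_{-\nu})=0$ for generic $\nu$, obtained by base change from the smooth very affine fibre $X_z$ (the fibres over $Z\setminus U$ being algebraic tori, which carry no twisted cohomology), but base change does not directly compute the fibre of the \emph{submodule} $N^{\rm hyp}(\nu)$. The cleanest remedy, if available, is to show $N^{\rm hyp}(\nu)=M^{\rm hyp}(\nu)$ for generic $\nu$ --- for instance via irreducibility of the hypergeometric module --- which collapses the whole issue; otherwise one re-runs the embedded-resolution and generic-monodromy argument (using $\ell_{ij}(\nu)\notin\Z$) on the subcomplex of $(\relOmega{\bullet},\nabla_\omega)$ that generates $N^{\rm hyp}(\nu)$. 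A secondary, bookkeeping-level point is to make ``develops a singularity at $z_0$'' precise: $Sol(N^{\rm hyp}(\nu))$ can fail to be a local system at $z_0$ either through nontrivial monodromy around a component of $\nabla_\chi^\pi(Z)$ through $z_0$ or through a drop of its stalk there, and in either case the phenomenon is realized by an explicit family $\Gamma_z$ of twisted cycles via the perfectness of the twisted period pairing for generic $\nu$.
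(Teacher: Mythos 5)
Your proposal follows essentially the same route as the paper: pass to the cyclic submodule $N^{\rm hyp}(\nu)$ generated by $\left[\frac{dx_1}{x_1}\wedge\cdots\wedge\frac{dx_n}{x_n}\right]$, use ${\rm Sing}(N^{\rm hyp}(\nu))={\rm Sing}(M^{\rm hyp}(\nu))$, identify solution germs of $N^{\rm hyp}(\nu)$ with Euler integrals via the surjection $I$ from twisted homology, and conclude through Theorem \ref{thm:ED=Sing2} and Proposition \ref{prop:singular set}. The hypothesis $L^{-1}\iota_z^{*}N^{\rm hyp}(\nu)=0$ that you single out as the main technical point is indeed what is needed to run Proposition \ref{prop:singular set} on the submodule, and the paper's own (very terse) argument leaves exactly this step implicit, so your write-up is, if anything, more explicit than the original.
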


\if0
\begin{proposition}
    A point $z\in Z$ is in the Euler discriminant locus if and only if there is a family of integration contours $\Gamma(w)\}$ so 
\end{proposition}
\fi

\begin{example}\label{ex:GKZ}
Let us consider a special case of our construction.
Let $A\subset\Z^n$ be a finite set, the variety $Z$ be $\C^A={\rm Spec}\,\C[z_a;a\in A]$ and let $f$ be of the following form: $f(x;z)=\sum_{a\in A}z_ax^a$.
For any $i=1,\dots,n$, $a_i$ denotes the $i$-th entry of $a\in A$.
The (regular) GG system is a left ideal $J_{\rm GG}$ of $\DD$ generated by
\begin{equation}
\sum_{a\in A}z_a\partial_{z_a}+\nu_0,\ \ \ \sum_{a\in A}a_iz_a\partial_{z_a}+\nu_i,\ \ \ \partial_{z_a}-\s_{\nu_0}\s_\nu^{a},
\end{equation}
which was first introduced in \cite{GGsystem}.
Here, $\s^a$ denotes a product $\prod_{i=1}^n\s_{\nu_i}^{a_i}$.
In fact, let $M_0$ be a one-dimensional $K$-vector space generated by a symbol $e$ which has a structure of a left $\DD$-module given by
\begin{equation}
    \s_{\nu_0}\cdot e:=(-\nu_0)\cdot e,\ \ \  \s_{\nu_i}\cdot e:=e, \ \ \ \partial_{z}\cdot e:=0.
\end{equation}
The symbol $e$ plays the role of a Gamma function $(-1)^{\nu_0}\Gamma(\nu_0)$.
A tensor product $M^{\rm hyp}\otimes_KM_0$ naturally has a structure of a left $\DD$-module by
\begin{equation}
    \s\circ (m\otimes a\cdot e)=\s\cdot m\otimes \s\cdot( a\cdot e),\ \ \partial_z\circ (m\otimes a\cdot e)=(\partial_z\cdot m)\otimes a\cdot e\ \  (m\in M^{\rm hyp},\  a\in K).
\end{equation}
A direct computation proves that there is a unique isomorphism of $\DD$ modules 
    $M^{\rm hyp}\otimes_K M_0\to\DD/J_{\rm GG}$ which sends an equivalence class $1\otimes e(s)$ to an equivalence class represented by $1$.
\end{example}


\subsection{$\hbar$-deformation}\label{subsec:3.2}

Let $Z_{[\![\hbar]\!]}$ be the base change of $Z_K$ via the natural inclusion $K\to K[\![\hbar]\!]$ and let $D_{Z_{[\![\hbar]\!]}}$ be the sheaf of differential operators on $Z_{[\![\hbar]\!]}$.
By definition, $D_{[\![ \hbar]\!]}$ is the subsheaf of $D_{Z_{[\![\hbar]\!]}}$ generated over $K[\![\hbar]\!]$ by $g$ and $\partial^\hbar:=\hbar \partial$ where $g$ is a local section of $\mathcal{O}_{Z_{[\![\hbar]\!]}}$ and $\partial$ is a local vector field on $Z_K$.
The ring $D_{[\![ \hbar]\!]}$ is left and right Noetherian.
Let $(U,g_1,\dots,g_N)$ be an \'etale coordinate.
The dual vector fields to one forms $dg_1,\dots,dg_N$ are denoted by $\partial_1,\dots,\partial_N$.
Then, $U\times \mathbb{A}^N\ni (z,\zeta)\mapsto\sum_{i=1}^N\zeta_idg_i(z)\in T^*U$ defines an isomorphism.
The correspondence $\partial_i^{\hbar}\mapsto \zeta_i$ induces a canonical isomorphism $D_{[\![ \hbar]\!]}/\hbar D_{[\![ \hbar]\!]}\simeq\mathcal{O}_{T^*Z_K}$ of $K$-algebras.
We also set $D_{((\hbar))}:=K((\hbar))\otimes_{K[\![\hbar]\!]}D_{[\![\hbar]\!]}$ which also carries a natural structure of a sheaf of $K$-algebras.
We define a non-commutative ring $\DD_{[\![\hbar]\!]}$ (resp. $\DD_{((\hbar))}$) which is generated by $\mathcal{O}_Z$, elements of the form $\partial_z^\hbar$ with $\partial_z$ a vector field on $Z$, and the difference operators $\s_{\nu_i}$ over $K[\![\hbar]\!]$ (resp. $K((\hbar))$).
Relations in this ring are given by
    \begin{equation} \label{eq:commutators1}
  [\partial_z^\hbar,g]=\hbar\partial_z  g\ \ \ (g\in \mathcal{O}_Z),\ \ 
  [\s_{\nu_i},\s_{\nu_j}]=0 \  \  \text{and}\ \ [\s_{\nu_i},\nu_j]=\delta_{ij}\hbar\s_{\nu_i}.
\end{equation}
The ring $D_{[\![\hbar]\!]}$ (resp. $D_{((\hbar))}$) is naturally a $K[\![\hbar]\!]$-subalgebra (resp. $K((\hbar))$-subalgebra) of $\DD_{[\![\hbar]\!]}$ (resp. $\DD_{((\hbar))}$).
In the following, we consider a field extention $K\hookrightarrow K((\hbar))$ defined by a scaling 
\begin{equation}\label{eq:scaling}
K\ni g(\nu)\mapsto g(\nu/\hbar)\in K((\hbar)).    
\end{equation}
One has a natural isomorphism of $K((\hbar))$-algebras 
\begin{equation}\label{eq:DD scaling}
K((\hbar))\otimes_K\DD\overset{\sim}{\to}\DD_{((\hbar))}    
\end{equation}
specified by $1\otimes z\mapsto z,\ \ 1\otimes \partial_z\mapsto \hbar^{-1}\partial_z^\hbar$, $1\otimes\nu_i\mapsto \nu_i/\hbar$ and $1\otimes\sigma_{\nu_i}\mapsto\sigma_{\nu_i}$.
It induces an isomorphism of $K((\hbar))$-algebras $K((\hbar))\otimes_KD_K\overset{\sim}{\to}D_{K((\hbar))}$ specified by $z\mapsto z,\ \ \partial_z\mapsto \hbar^{-1}\partial_z^\hbar$.
We write $J_{[\![\hbar]\!]}\subset\DD_{[\![\hbar]\!]}$ (resp. $J_{((\hbar))}\subset\DD_{((\hbar))}$) for the left ideal of $\DD_{[\![\hbar]\!]}$ (resp. $\DD_{((\hbar))}$) generated by the operators 
\begin{equation}\label{eq:generators of J_h}
    1-\s_{s}f(\s_\nu;z),\ \ \ \nu_j - s \, \s_{s}\s_{\nu_j}\frac{\partial f}{\partial x_j}(\s_{\nu};z),\ \ \ \partial_z^\hbar+s\s_{s}{\partial_z f}(\s_{\nu};z).
\end{equation}
The $\hbar$-deformed hypergeometric $D$-module $M_{[\![\hbar]\!]}^{\rm hyp}$ is a quotient $\DD_{[\![\hbar]\!]}/J_{[\![\hbar]\!]}$ viewed either as a left $\DD_{[\![\hbar]\!]}$-module or a left $D_{[\![\hbar]\!]}$-module.
We also set $M^{\rm hyp}_0:=(K[\![\hbar]\!]/(\hbar))\otimes_{K[\![\hbar]\!]}M^{\rm hyp}_{[\![\hbar]\!]}$, which is a $\DD_{[\![\hbar]\!]}/\hbar\DD_{[\![\hbar]\!]}\simeq\mathcal{O}_{T^*Z_K\times T_K}$-module.

Let us now define a deformed version of de Rham complex.
This is nothing but 
a push-forward on the level of ${D}_{[\![\hbar]\!]}$-modules (\cite[\S5.3]{kuwagaki2022hbar}).
We set
$$
\Omega^{k}_{(X/Z)[\![\hbar]\!]}=\sum_{I=(i_1,\dots,i_k)}\mathcal{O}_{Z_{[\![\hbar]\!]}\times T}\left[\frac{1}{f(x;z)}\right]dx^I,
\ 
\nabla_\omega^\hbar=\hbar{\rm d}_x+\omega\wedge:\Omega^{k}_{(X/Z)[\![\hbar]\!]}\to\Omega^{k+1}_{(X/Z)[\![\hbar]\!]}.
$$
This gives rise to a complex $(\Omega^{\bullet}_{(X/Z)[\![\hbar]\!]}, \nabla_\omega^\hbar) :  0 \to \Omega^{0}_{(X/Z)[\![\hbar]\!]}\overset{\nabla_\omega}{\longrightarrow} \cdots \overset{\nabla_\omega}{\longrightarrow} \Omega^{n}_{(X/Z)[\![\hbar]\!]}\to 0$.
The $n$-th cohomology of this complex is denoted by $H^n((X/Z)_{K[\![\hbar]\!]},\omega)$.

\begin{lemma}\label{lem:no torsion}
    $H^n((X/Z)_{K[\![\hbar]\!]},\omega)$ has no $\hbar$-torsion.
\end{lemma}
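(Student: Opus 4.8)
The plan is to show that the cohomology $H^n((X/Z)_{K[\![\hbar]\!]},\omega)$ is $\hbar$-torsion free by exhibiting it as the top cohomology of a complex of $\hbar$-torsion-free modules whose $(n-1)$-st differential has an $\hbar$-torsion-free cokernel-adjacent structure; concretely, I would reduce to a statement about the twisted relative de Rham complex over the torus. Recall that each $\Omega^k_{(X/Z)[\![\hbar]\!]} = \sum_I \mathcal{O}_{Z_{[\![\hbar]\!]}\times T}\bigl[\tfrac{1}{f}\bigr]\, dx^I$ is a free, hence $\hbar$-torsion-free, $K[\![\hbar]\!]$-module, since $\mathcal{O}_{Z_{[\![\hbar]\!]}} = \mathcal{O}_{Z_K}[\![\hbar]\!]$ and localization preserves flatness. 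Thus the only possible $\hbar$-torsion in $H^n$ comes from the image of $\nabla^\hbar_\omega\colon \Omega^{n-1}_{(X/Z)[\![\hbar]\!]}\to \Omega^n_{(X/Z)[\![\hbar]\!]}$: a class $[\xi]$ is $\hbar$-torsion iff $\hbar^m\xi = \nabla^\hbar_\omega(\eta)$ for some $m\geq 1$ and some $\eta$, and we must show this forces $\xi = \nabla^\hbar_\omega(\eta')$ already.

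The key step is a flatness/base-change argument: I would compare the complex $(\Omega^\bullet_{(X/Z)[\![\hbar]\!]},\nabla^\hbar_\omega)$ with the corresponding complex over $K$, i.e. $(\relOmega{\bullet},\nabla_\omega)$ from Section~\ref{subsec:3.1}, via the scaling isomorphism \eqref{eq:DD scaling} together with the substitution $\nu\mapsto\nu/\hbar$. Over $K((\hbar))$ the deformed complex becomes, after rescaling the differential by $\hbar$, isomorphic to $K((\hbar))\otimes_K(\relOmega{\bullet},\nabla_\omega)$, and by Theorem~\ref{thm:relative_TCLI}(2) (more precisely its hypersurface version implicit in Proposition~\ref{prop:3.1}) this complex has cohomology concentrated in degree $n$. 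Hence $H^j((X/Z)_{K[\![\hbar]\!]},\omega)\otimes_{K[\![\hbar]\!]}K((\hbar)) = 0$ for $j\neq n$, which says precisely that $H^j$ is $\hbar$-torsion (it is killed after inverting $\hbar$) for $j < n$. The universal coefficient / hyper-Tor spectral sequence for the base change $K[\![\hbar]\!]\to K[\![\hbar]\!]/(\hbar)=K$ then relates $H^n_0 := H^n(\Omega^\bullet_0,\bar\nabla_\omega)$ to $H^n\otimes_{K[\![\hbar]\!]}K$ and to $\mathrm{Tor}_1^{K[\![\hbar]\!]}(H^{n+1},K)$; since $H^{n+1}=0$ there is no $\mathrm{Tor}_1$ contribution from above, and a diagram chase shows the $\hbar$-torsion submodule of $H^n$ injects into $\mathrm{Tor}_1^{K[\![\hbar]\!]}(H^{n+1},K)=0$. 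Therefore $H^n$ is $\hbar$-torsion free. (Equivalently, one argues directly: over the PID-like ring $K[\![\hbar]\!]$, for a bounded complex of torsion-free modules whose cohomology is concentrated in top degree after inverting $\hbar$, the top cohomology is automatically torsion free because multiplication by $\hbar$ is injective on it — any torsion class would produce a nonzero torsion class in the cone, forcing lower cohomology.)

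The main obstacle is the comparison in the previous paragraph being valid at the integral ($K[\![\hbar]\!]$, not $K((\hbar))$) level: the scaling isomorphism \eqref{eq:DD scaling} involves $\hbar^{-1}$ and $\nu/\hbar$, so it genuinely lives over $K((\hbar))$, and one cannot directly transport the degeneration statement of Theorem~\ref{thm:relative_TCLI}(2) to $K[\![\hbar]\!]$. So I expect the careful work to be in organizing the argument so that only the vanishing $H^j\otimes_{K[\![\hbar]\!]}K((\hbar))=0$ for $j\neq n$ is used — which is purely a statement after inverting $\hbar$ and hence unproblematic — and then deducing torsion-freeness of $H^n$ from the torsion-freeness of the terms $\Omega^k_{(X/Z)[\![\hbar]\!]}$ by a short homological-algebra lemma over $K[\![\hbar]\!]$. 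A secondary subtlety is checking that $\Omega^{n-1}_{(X/Z)[\![\hbar]\!]}$ and the kernel of $\nabla^\hbar_\omega$ in degree $n$ fit together without introducing torsion; this follows from the terms being free $K[\![\hbar]\!]$-modules, so the only real input needed is the after-inverting-$\hbar$ vanishing, which I would cite from Proposition~\ref{prop:3.1} and Theorem~\ref{thm:relative_TCLI}.
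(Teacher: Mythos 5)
There is a genuine gap: your homological bookkeeping points at the wrong Tor group, and the step you hope to avoid is exactly the content of the lemma. Write $C^\bullet$ for $(\Omega^\bullet_{(X/Z)[\![\hbar]\!]},\nabla^\hbar_\omega)$ and $C_0^\bullet=C^\bullet/\hbar C^\bullet$, whose differential is pure wedge by $\omega$ (the $\hbar d_x$ term dies at $\hbar=0$). Since the terms of $C^\bullet$ are $\hbar$-torsion free, the short exact sequence $0\to C^\bullet\overset{\hbar}{\to}C^\bullet\to C_0^\bullet\to 0$ shows that the kernel of multiplication by $\hbar$ on $H^n(C^\bullet)$ (whose vanishing is what torsion-freeness amounts to) is a quotient of $H^{n-1}(C_0^\bullet)=\ker(\omega\wedge:\Omega^{n-1}_{X/Z}\to\Omega^{n}_{X/Z})/\omega\wedge\Omega^{n-2}_{X/Z}$; equivalently, in the universal coefficient sequence this kernel is $\mathrm{Tor}_1^{K[\![\hbar]\!]}(H^n,K)$, which is a quotient of $H^{n-1}(C_0^\bullet)$, not something that injects into $\mathrm{Tor}_1^{K[\![\hbar]\!]}(H^{n+1},K)$ --- that group sits one degree up and its vanishing is irrelevant. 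Your parenthetical ``direct'' argument is false as stated: for the two-term complex $K[\![\hbar]\!]\to K[\![\hbar]\!]^{2}$, $1\mapsto(\hbar,0)$, the terms are free, the cohomology after inverting $\hbar$ is concentrated in top degree, yet the top cohomology is $K\oplus K[\![\hbar]\!]$ and has $\hbar$-torsion. So torsion-freeness of the terms plus acyclicity over $K((\hbar))$ in degrees $<n$ --- which is all your argument uses --- cannot prove the lemma.

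The missing input is the vanishing of $H^{n-1}(C_0^\bullet)$, i.e.\ the identity $\ker(\omega\wedge:\Omega^{n-1}_{X/Z}\to\Omega^{n}_{X/Z})=\omega\wedge\Omega^{n-2}_{X/Z}$, and this is where the paper does the real work: it considers the incidence variety $V=\{(x,z,[\nu])\in X\times\mathbb{P}^{n}\mid \omega_\nu(x,z)=0\}$, uses the smoothness of $X$ and a Bertini-type theorem to get $\dim_{\C}V=\dim_{\C}Z$, and concludes that the coefficients of $\omega$ form a regular sequence, so the Koszul-type complex $(\Omega^\bullet_{X/Z},\omega\wedge)$ is exact in degrees below $n$; injectivity of $\hbar$ on $H^n$, hence torsion-freeness, then follows from the long exact sequence. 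Your proposal never engages with the $\hbar=0$ fibre of the complex beyond freeness of its terms, so this essential geometric step is absent, and it cannot be supplied by the $K((\hbar))$-level comparison with Theorem \ref{thm:relative_TCLI} or Proposition \ref{prop:3.1}.
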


\begin{proof}
    An exact sequence $
        0\to(\Omega^\bullet_{(X/Z)[\![\hbar]\!]},\hbar d+\omega\wedge)\overset{\hbar}{\to}(\Omega^\bullet_{(X/Z)[\![\hbar]\!]},\hbar d+\omega\wedge)\to (\Omega^\bullet_{(X/Z)},\omega\wedge)\to 0$ induces the following exact sequence:
    \begin{equation}\label{eq:27}
        0\to \frac{{\rm Ker}\,(\omega\wedge:\Omega^{n-1}_{X/Z}\to \Omega^{n}_{X/Z})}{\omega\wedge\Omega^{n-2}_{X/Z}}\to H^n((X/Z)_{K[\![\hbar]\!]},\omega)\overset{\hbar}{\to}H^n((X/Z)_{K[\![\hbar]\!]},\omega).
    \end{equation}
    For a vector $(\nu_0,\dots,\nu_n)\in\C^{n+1}$, let the symbol $\omega_\nu$ denotes a one form $-\nu_0\frac{d_xf}{f}+\sum_{i=1}^n\nu_i\frac{dx_i}{x_i}$ on $T^n\setminus V_T(f)$.
    We consider a projection $\pi:V:=\{ (x,z,[\nu])\in X\times\mathbb{P}^{n} \mid \omega_{\nu}(x,z)=0\}\ni (x,z,[\nu])\mapsto [\nu]\in \mathbb{P}^{n}$.
    Note that the smoothness of $X$ implies the smoothness of $V$.
    By a version of Bertini's theorem \cite[Th\'eor\`eme 6.10]{jouanolou42theoremes}, there is a Zariski closed proper subset $Y\subset\mathbb{P}^{n}$ such that each fiber $\pi^{-1}([\nu])$ is smooth for any $[\nu]\in\mathbb{P}^{n}$.
    Moreover, since the dimension $\dim_{\C}V$ is equal to $\dim_\C Z$, it follows that $\omega\wedge$ forms a regular sequence, hence the leftmost term of \eqref{eq:27} is zero, which implies that $H^n((X/Z)_{K[\![\hbar]\!]},\omega)$ has no $\hbar$-torsion.
\end{proof}


\begin{theorem}\label{thm:hbar isomorphism}
    The morphism $M_{[\![\hbar]\!]}^{\rm hyp}\to H^n((X/Z)_{K[\![\hbar]\!]},\omega)$ sending $[1]$ to $\left[\frac{dx_1}{x_1}\wedge\cdots\wedge \frac{dx_n}{x_n}\right]$ is an isomorphism of $\DD_{[\![\hbar]\!]}$-modules.
\end{theorem}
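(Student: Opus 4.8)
The plan is to mimic, in the $\hbar$-deformed setting, the argument that identifies $M^{\mathrm{hyp}}$ with the twisted de Rham cohomology $H^n((X/Z)_K,\omega)$ from Proposition \ref{prop:3.1}, together with the torsion-freeness just established in Lemma \ref{lem:no torsion}. First I would note that the map is well-defined: one checks directly that each generator of $J_{[\![\hbar]\!]}$ in \eqref{eq:generators of J_h} annihilates the class $\left[\frac{dx_1}{x_1}\wedge\cdots\wedge\frac{dx_n}{x_n}\right]$ in $H^n((X/Z)_{K[\![\hbar]\!]},\omega)$. The relations $1-\s_s f$ and $\nu_j - s\,\s_s\s_{\nu_j}\frac{\partial f}{\partial x_j}$ reflect the way $\s_s$ and $\s_{\nu_j}$ act on cohomology classes (multiplication by $f$, resp. by $x_j$, combined with the parameter shift), exactly as in the non-deformed case, while the relation $\partial_z^\hbar + s\,\s_s\partial_z f$ encodes the Gauss--Manin-type identity $\hbar\,\partial_z\left[\eta\right] = \left[\hbar\,\partial_z\eta\right] = -\left[(\partial_z f/f)\,\nu_0\,\eta + \cdots\right]$ after moving the $\hbar\,d_x$-exact correction terms; this is precisely the $\hbar$-twisted analogue, and the $\hbar$ appears in the right place because $\nabla_\omega^\hbar = \hbar\,d_x + \omega\wedge$.

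Next I would reduce the isomorphism claim to the $\hbar\to 0$ limit. Both sides are $K[\![\hbar]\!]$-modules; the source $M_{[\![\hbar]\!]}^{\mathrm{hyp}}$ may a priori have $\hbar$-torsion, but the target does not by Lemma \ref{lem:no torsion}, so the map factors through $M_{[\![\hbar]\!]}^{\mathrm{hyp}}/(\hbar\text{-torsion})$. The strategy is then: (i) show the induced map modulo $\hbar$, namely $M_0^{\mathrm{hyp}} = M_{[\![\hbar]\!]}^{\mathrm{hyp}}/\hbar \to H^n((X/Z)_{K[\![\hbar]\!]},\omega)/\hbar = H^n((X/Z)_K,\omega_0)$ (the untwisted de Rham cohomology, since modulo $\hbar$ the differential becomes $\omega\wedge$ with no $d_x$), is an isomorphism; (ii) invoke a graded/complete Nakayama argument. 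For (ii) one uses that both modules are $\hbar$-adically separated and complete (or finitely generated over the Noetherian ring $D_{[\![\hbar]\!]}$, with $\hbar$ in the Jacobson radical after suitable localization), so that surjectivity mod $\hbar$ gives surjectivity, and then injectivity follows from torsion-freeness of the target together with the fact that a surjection of $\hbar$-torsion-free modules inducing an isomorphism mod $\hbar$ must be injective (if $m$ is in the kernel, write $m = \hbar m'$, and $\hbar(\phi(m'))=0$ forces $\phi(m')=0$, so $m'$ is also in the kernel, and by separatedness $m=0$).

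For step (i), the computation mod $\hbar$ is essentially the content of Proposition \ref{prop:3.1} read off at $\hbar = 0$: the relations \eqref{eq:generators of J_h} reduce to $1 = \s_s f$, $\nu_j = s\,\s_s\s_{\nu_j}\frac{\partial f}{\partial x_j}$, and $s\,\s_s\,\partial_z f = 0$ in the associated graded, which is exactly the presentation of the symbol of the hypergeometric module / the defining relations of the characteristic variety; more efficiently, one can deduce it by the same localization-over-$\C[\nu]$ and base-change arguments used around Proposition \ref{prop:defined over C}, combined with the explicit de Rham description. Alternatively — and this is probably the cleanest route — one avoids (i) entirely: scaling \eqref{eq:DD scaling}–\eqref{eq:scaling} shows $\DD_{((\hbar))} \cong K((\hbar))\otimes_K \DD$ carries $J_{((\hbar))}$ to $K((\hbar))\otimes_K J$, so Proposition \ref{prop:3.1} immediately gives $M_{((\hbar))}^{\mathrm{hyp}} \cong H^n((X/Z)_{K((\hbar))},\omega)$; then one only needs that passing from $K((\hbar))$-coefficients down to the $K[\![\hbar]\!]$-lattice is compatible on both sides, which is where torsion-freeness (Lemma \ref{lem:no torsion}) and the fact that $M_{[\![\hbar]\!]}^{\mathrm{hyp}}$ is generated by the single element $[1]$ whose image generates a $K[\![\hbar]\!]$-lattice do the work.

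The main obstacle I anticipate is step (ii)/the lattice comparison: controlling that the obvious $K[\![\hbar]\!]$-submodule generated by $[1]$ on the cohomology side really equals $H^n((X/Z)_{K[\![\hbar]\!]},\omega)$ as a $D_{[\![\hbar]\!]}$-module, i.e. that no denominators in $\hbar$ are needed to express an arbitrary cohomology class in terms of $[1]$ under the $\DD_{[\![\hbar]\!]}$-action. This amounts to an $\hbar$-integrality statement for the Gauss--Manin connection and the shift operators, and is exactly the point where Lemma \ref{lem:no torsion} (via the Bertini-type regularity of the one-form $\omega_\nu$ on $V$) is used: it guarantees the relevant cohomology is a free $K[\![\hbar]\!]$-module of the expected rank, so a rank count plus the mod-$\hbar$ surjectivity closes the argument.
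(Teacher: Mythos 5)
Your main line is essentially the paper's proof: reduce modulo $\hbar$, use Lemma \ref{lem:no torsion} to keep the sequence exact after tensoring with $K[\![\hbar]\!]/(\hbar)$, identify the reduction of $M^{\rm hyp}_{[\![\hbar]\!]}$ with $\Omega^{n}_{X/Z}/\omega\wedge\Omega^{n-1}_{X/Z}$, and kill the kernel by your divide-by-$\hbar$ iteration plus $\hbar$-adic separatedness of $\DD_{[\![\hbar]\!]}/J_{[\![\hbar]\!]}$ (invertibility of $1+\hbar P$), which is exactly the paper's argument $N=\hbar N\Rightarrow N\subset\bigcap_{k\geq 0}\hbar^{k}M^{\rm hyp}_{[\![\hbar]\!]}=0$. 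Three cautions: (a) modulo $\hbar$ the third generator of \eqref{eq:generators of J_h} reduces to $\zeta_z+s\,\s_{s}\,\partial_z f=0$, where $\zeta_z$ is the image of $\partial_z^\hbar$ in $\mathcal{O}_{T^*Z_K}$ --- it does not become $s\,\s_{s}\,\partial_z f=0$ --- and the reduction of the target is the Koszul quotient $\Omega^{n}_{X/Z}/\omega\wedge\Omega^{n-1}_{X/Z}$, not an ``untwisted de Rham'' group; (b) surjectivity is better taken directly, since the class of $\frac{dx_1}{x_1}\wedge\cdots\wedge\frac{dx_n}{x_n}$ visibly generates the target over $\DD_{[\![\hbar]\!]}$, whereas neither module is obviously $\hbar$-adically complete, so the ``complete Nakayama'' justification is shaky; (c) your fallback appeals to coherence of $M^{\rm hyp}_{[\![\hbar]\!]}$ over $D_{[\![\hbar]\!]}$ and to its $\hbar$-torsion-freeness would be circular, as those are Theorem \ref{thm:3.7} and Corollary \ref{cor:no torsion}, both established after (and using) the present theorem.
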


\begin{proof}
The morphism is obviously surjective.
Let us write $N$ for the kernel of the morphism $M_{[\![\hbar]\!]}^{\rm hyp}\to H^n((X/Z)_{K[\![\hbar]\!]},\omega)$.
We obtain an exact sequence $0\to N\to M_{[\![\hbar]\!]}^{\rm hyp}\to H^n((X/Z)_{K[\![\hbar]\!]},\omega)\to 0$.
Taking tensor product with $K[\![\hbar]\!]/(\hbar)$, one obtains an exact sequence $
    0\to N/\hbar N\to M_{[\![\hbar]\!]}^{\rm hyp}\otimes_{K[\![\hbar]\!]} K[\![\hbar]\!]/(\hbar)\to \frac{\Omega^{n}_{X/Z}}{\omega\wedge\Omega^{n-1}_{X/Z}}\to 0$
by Lemma \ref{lem:no torsion}.
The last morphism is an isomorphism.
Therefore, we obtain that $N=\hbar N$.
Inductively, we obtain that $
N=\bigcap_{n\geq 0}\hbar^n N\subset \bigcap_{n\geq 0}\hbar^n(\DD_{[\![\hbar]\!]}/J_{[\![\hbar]\!]})=0$.    
The last equality is a consequence of completion theory.
In fact, it is equivalent to that the quotient $M_{[\![\hbar]\!]}^{\rm hyp}=\DD_{[\![\hbar]\!]}/J_{[\![\hbar]\!]}$ is Haussdorff with respect to the $\hbar$-adic topology.
The last condition can be verified immediately since $1+\hbar P$ is an invertible element of $\DD_{[\![\hbar]\!]}$ for any $P\in\DD_{[\![\hbar]\!]}$ (see e.g., the proof of \cite[Theorem 8.10]{Matsumura}).
\end{proof}

\begin{corollary}\label{cor:no torsion}
    $\DD_{[\![\hbar]\!]}/J_{[\![\hbar]\!]}$ has no $\hbar$-torsion.
\end{corollary}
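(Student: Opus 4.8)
The plan is to deduce Corollary~\ref{cor:no torsion} directly from Theorem~\ref{thm:hbar isomorphism} together with Lemma~\ref{lem:no torsion}. Indeed, Theorem~\ref{thm:hbar isomorphism} exhibits an isomorphism of $\DD_{[\![\hbar]\!]}$-modules
$$
\DD_{[\![\hbar]\!]}/J_{[\![\hbar]\!]}=M_{[\![\hbar]\!]}^{\rm hyp}\;\overset{\sim}{\longrightarrow}\;H^n((X/Z)_{K[\![\hbar]\!]},\omega),
$$
and Lemma~\ref{lem:no torsion} asserts that the right-hand side has no $\hbar$-torsion. Since the property of having no $\hbar$-torsion is preserved under isomorphism, the left-hand side $\DD_{[\![\hbar]\!]}/J_{[\![\hbar]\!]}$ has no $\hbar$-torsion as well.

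First I would recall what "$\hbar$-torsion" means in this context: an element $m$ of a $K[\![\hbar]\!]$-module $N$ is $\hbar$-torsion if $\hbar^k m=0$ for some $k\geq 1$, and $N$ has no $\hbar$-torsion precisely when multiplication by $\hbar$ is injective on $N$. Both $M_{[\![\hbar]\!]}^{\rm hyp}$ and $H^n((X/Z)_{K[\![\hbar]\!]},\omega)$ are $K[\![\hbar]\!]$-modules (in fact $\DD_{[\![\hbar]\!]}$-modules, and $K[\![\hbar]\!]$ sits inside $\DD_{[\![\hbar]\!]}$ as scalars commuting with everything), and the isomorphism of Theorem~\ref{thm:hbar isomorphism} is $K[\![\hbar]\!]$-linear since it is $\DD_{[\![\hbar]\!]}$-linear. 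Hence it intertwines the two multiplication-by-$\hbar$ maps, so one is injective if and only if the other is.

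Then I would simply invoke Lemma~\ref{lem:no torsion}, which says $H^n((X/Z)_{K[\![\hbar]\!]},\omega)$ has no $\hbar$-torsion, to conclude that $\DD_{[\![\hbar]\!]}/J_{[\![\hbar]\!]}$ has none either. This is the entire argument; there is essentially no obstacle, since the real work has already been done in proving Lemma~\ref{lem:no torsion} (via the Bertini-type genericity argument showing $\omega\wedge$ is a regular sequence, which forces the leftmost term of the long exact sequence \eqref{eq:27} to vanish) and in proving Theorem~\ref{thm:hbar isomorphism} (via the $\hbar$-adic completeness/Hausdorff argument). If one wanted a self-contained restatement, one could alternatively note that the proof of Theorem~\ref{thm:hbar isomorphism} already shows $N=\bigcap_{k\geq 0}\hbar^k(\DD_{[\![\hbar]\!]}/J_{[\![\hbar]\!]})=0$, and the same Hausdorff/completeness input shows that no nonzero element is killed by a power of $\hbar$; but the cleanest route is the one-line transport of Lemma~\ref{lem:no torsion} across the isomorphism.
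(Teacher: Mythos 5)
Your proposal is correct and matches the paper's intended argument: the corollary is stated immediately after Theorem \ref{thm:hbar isomorphism} precisely so that one transports the $\hbar$-torsion-freeness of $H^n((X/Z)_{K[\![\hbar]\!]},\omega)$ from Lemma \ref{lem:no torsion} across the $\DD_{[\![\hbar]\!]}$-linear (hence $K[\![\hbar]\!]$-linear) isomorphism. Nothing further is needed.
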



\section{Hypergeometric discriminants}\label{sec:Hypergeometric Discriminant}

\subsection{Likelihood formula of the hypergeometric discriminants}\label{subsec:likelihood formula}
{\it Hypergeometric discriminant} is the characteristic cycle $E^{\rm hyp}$ of $M^{\rm hyp}$.
The purpose of this section is to study hypergeometric discriminant employing an $\hbar$-deformation.
For any integer $k$, $F_kD_{[\![ \hbar]\!]}$ denotes the $\mathcal{O}_{Z_{[\![ \hbar]\!]}}$-submodule of $D_{[\![ \hbar]\!]}$ generated by $\partial_{i_1}^\hbar\cdots\partial_{i_\ell}^\hbar$ for any $\ell=0,1,\dots,k$ and $1\leq i_1,\dots,i_\ell\leq N$. 
We also define a filtration $F_kD_{((\hbar))} \subset D_{((\hbar))}$ as $F_kD_{((\hbar))} := K((\hbar)) \otimes_K F_kD$. Likewise, we construct a filtration $F_k\mathcal{O}_{T^*Z_K} \subset \mathcal{O}_{T^*Z_K}$ by setting $F_k\mathcal{O}_{T^*Z_K} := (K[[\hbar]]/(\hbar)) \otimes_{K[[\hbar]]} F_kD_{[\![\hbar]\!]}$. Thus, $D_{[\![ \hbar]\!]}, D_{((\hbar))}$, and $\mathcal{O}_{T^*Z_K}$ are filtered rings.
For a coherent $R$-module $M$ where $R$ is either $D_{[\![ \hbar]\!]},D_{((\hbar))}$ or $\mathcal{O}_{T^*Z_K}$, a filtration $\{ F_kM\}_{k\in\Z}$ of $M$ is a good filtration if the associated graded ${\rm gr}(M)$ is a finitely generated ${\rm gr}(R)$-module.
The associated algebraic cycle is a formal sum
$CC(M):=\sum_{\mathfrak{p}\in {\rm Supp}_0({\rm gr}(M))}m_{\mathfrak{p}}V(\mathfrak{p})$, which we call the characteristic cycle.
The characteristic cycle $CC(M)$ does not depend on the choice of a good filtration of $M$.
In particular, the characteristic cycle of an $\mathcal{O}_{T^*Z_K}$-module is an algebraic cycle on $T^*Z_K$, as there is a canonical isomorphism $\mathcal{O}_{T^*Z_K}\simeq {\rm gr}(\mathcal{O}_{T^*Z_K})$.
The following theorem is the main result of this section.

\begin{theorem}\label{thm:main result}
\begin{equation*}
    E^{\rm hyp}=CC(M^{\rm hyp}_0).
\end{equation*}    
\end{theorem}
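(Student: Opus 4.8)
The plan is to interpolate between the noncommutative module $M^{\rm hyp}$, with $CC(M^{\rm hyp})=E^{\rm hyp}$, and the commutative module $M^{\rm hyp}_0$ by means of the $\hbar$-deformation $M^{\rm hyp}_{[\![\hbar]\!]}=\DD_{[\![\hbar]\!]}/J_{[\![\hbar]\!]}$, viewed as a $D_{[\![\hbar]\!]}$-module. By Theorem \ref{thm:hbar isomorphism} we have $M^{\rm hyp}_{[\![\hbar]\!]}\simeq H^n((X/Z)_{K[\![\hbar]\!]},\omega)$, which is the relative de Rham pushforward along the affine smooth morphism $X\to Z$ of the integrable connection $\mathcal{O}_{X}f^{-\nu_0}x^\nu$; in particular it is coherent over $D_{[\![\hbar]\!]}$, and its generic fibre $M^{\rm hyp}$ is holonomic, so $E^{\rm hyp}$ is a genuine $(\dim Z)$-cycle. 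Fix a good filtration $F_\bullet M^{\rm hyp}_{[\![\hbar]\!]}$ over the filtered ring $D_{[\![\hbar]\!]}$; then $\mathrm{gr}^F M^{\rm hyp}_{[\![\hbar]\!]}$ is coherent over the commutative ring $\mathrm{gr}^F D_{[\![\hbar]\!]}$, whose specialization at $\hbar=0$ is $\mathcal{O}_{T^*Z_K}=\mathrm{gr}(D_{[\![\hbar]\!]}/\hbar D_{[\![\hbar]\!]})$, and the associated cycle $[\mathrm{gr}^F M^{\rm hyp}_{[\![\hbar]\!]}]$ is a family of cycles on $T^*Z_K$ over $\mathrm{Spec}\,K[\![\hbar]\!]$. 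The proof reduces to identifying its two fibres.

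For the generic fibre I invert $\hbar$ and use the rescaling isomorphism \eqref{eq:DD scaling}, induced by the substitutions $\nu\mapsto\nu/\hbar$ and $\partial_z\mapsto\hbar^{-1}\partial_z^\hbar$ of \eqref{eq:scaling}: under it the generators \eqref{eq:generators of J_h} of $J_{((\hbar))}$ correspond, up to the central unit $\hbar$, to the generators \eqref{eq:generators of J} of $J$, so $M^{\rm hyp}_{((\hbar))}:=\DD_{((\hbar))}/J_{((\hbar))}\simeq K((\hbar))\otimes_K M^{\rm hyp}$ as modules over $D_{((\hbar))}\simeq D_{K((\hbar))}$. This isomorphism scales the cotangent fibres by a unit, hence is an automorphism of the conic variety $T^*Z$ and preserves characteristic cycles, so $CC_{D_{((\hbar))}}(M^{\rm hyp}_{((\hbar))})=E^{\rm hyp}\otimes_K K((\hbar))$. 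Since $K((\hbar))$ is flat over $K[\![\hbar]\!]$, the filtration $K((\hbar))\otimes_{K[\![\hbar]\!]}F_\bullet$ is good on $M^{\rm hyp}_{((\hbar))}=K((\hbar))\otimes_{K[\![\hbar]\!]}M^{\rm hyp}_{[\![\hbar]\!]}$ and $\mathrm{gr}$ commutes with the base change, so the restriction of $[\mathrm{gr}^F M^{\rm hyp}_{[\![\hbar]\!]}]$ to $\hbar\ne 0$ equals $E^{\rm hyp}\otimes_K K((\hbar))$.

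For the special fibre, $M^{\rm hyp}_0=M^{\rm hyp}_{[\![\hbar]\!]}/\hbar M^{\rm hyp}_{[\![\hbar]\!]}$ is coherent over $D_{[\![\hbar]\!]}/\hbar D_{[\![\hbar]\!]}\simeq\mathcal{O}_{T^*Z_K}$, and I want $\mathrm{gr}^F M^{\rm hyp}_{[\![\hbar]\!]}/\hbar\,\mathrm{gr}^F M^{\rm hyp}_{[\![\hbar]\!]}=\mathrm{gr}(M^{\rm hyp}_0)$ for the induced filtration. This holds as soon as the filtration is strict for multiplication by $\hbar$, which can be arranged by replacing $F_\bullet$ with its $\hbar$-saturation $\widetilde F_k:=\{\,m\in M^{\rm hyp}_{[\![\hbar]\!]}\mid \hbar^N m\in F_k M^{\rm hyp}_{[\![\hbar]\!]}\text{ for some }N\,\}$: by Corollary \ref{cor:no torsion} ($M^{\rm hyp}_{[\![\hbar]\!]}$ has no $\hbar$-torsion) together with Noetherianity of the Rees algebra of $D_{[\![\hbar]\!]}$, the filtration $\widetilde F_\bullet$ is again a good filtration with the same characteristic cycle, and it is $\hbar$-strict by construction. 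Applying $\mathrm{gr}^{\widetilde F}$ to the exact sequence $0\to M^{\rm hyp}_{[\![\hbar]\!]}\xrightarrow{\ \hbar\ }M^{\rm hyp}_{[\![\hbar]\!]}\to M^{\rm hyp}_0\to 0$ (exact by Corollary \ref{cor:no torsion}) then yields an exact sequence $0\to\mathrm{gr}^{\widetilde F}M^{\rm hyp}_{[\![\hbar]\!]}\xrightarrow{\ \hbar\ }\mathrm{gr}^{\widetilde F}M^{\rm hyp}_{[\![\hbar]\!]}\to\mathrm{gr}(M^{\rm hyp}_0)\to 0$; hence $\mathrm{gr}^{\widetilde F}M^{\rm hyp}_{[\![\hbar]\!]}$ is $\hbar$-torsion-free, so flat over the discrete valuation ring $K[\![\hbar]\!]$, so the family of cycles $[\mathrm{gr}^{\widetilde F}M^{\rm hyp}_{[\![\hbar]\!]}]$ has no component over $\hbar=0$ and is therefore the constant extension $E^{\rm hyp}\otimes_K K[\![\hbar]\!]$ of its generic fibre computed above. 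Restricting to $\hbar=0$ gives $CC(M^{\rm hyp}_0)=[\mathrm{gr}(M^{\rm hyp}_0)]=E^{\rm hyp}$.

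The one genuinely technical point is the $\hbar$-saturation step — that $\widetilde F_\bullet$ remains a good filtration and has unchanged characteristic cycle. I expect to settle this with the standard Rees-module formalism for good filtrations (Noetherianity of the Rees algebra of $D_{[\![\hbar]\!]}$, which makes the increasing chains $\hbar^{-N}F_k M^{\rm hyp}_{[\![\hbar]\!]}\cap M^{\rm hyp}_{[\![\hbar]\!]}$ stabilize) together with the independence of $CC$ from the choice of good filtration. An alternative, perhaps closer in spirit to \S\ref{subsec:3.2}, is to take $F_\bullet$ to be the pole-order (Griffiths) filtration carried by the de Rham presentation of Theorem \ref{thm:hbar isomorphism} and verify $\hbar$-strictness directly: the associated graded is then a Koszul-type complex whose acyclicity outside the top degree is exactly the dimension count $\dim V=\dim Z$ used in the proof of Lemma \ref{lem:no torsion}, which simultaneously forces $\mathrm{gr}^F M^{\rm hyp}_{[\![\hbar]\!]}$ to be $\hbar$-torsion-free.
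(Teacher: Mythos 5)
Your overall strategy is the paper's: interpolate through $M^{\rm hyp}_{[\![\hbar]\!]}$, use the $\hbar$-torsion-freeness of Corollary \ref{cor:no torsion} to build an $\hbar$-strict good filtration, and compare the associated graded at the generic and special fibres; your $\hbar$-saturation plays the role of the paper's filtration constructed from a presentation so that \eqref{eq:intersection} holds. The genuine gap is at the decisive step. The identification $M^{\rm hyp}_{((\hbar))}\simeq K((\hbar))\otimes_K M^{\rm hyp}$ is taken along the field extension \eqref{eq:scaling}, i.e.\ $g(\nu)\mapsto g(\nu/\hbar)$, not along the standard inclusion $K\subset K((\hbar))$. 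You describe the induced map as ``scaling the cotangent fibres by a unit,'' which accounts for $\partial_z\mapsto\hbar^{-1}\partial_z^{\hbar}$ but ignores the substitution $\nu\mapsto\nu/\hbar$ in the coefficients. Hence the generic-fibre cycle is the image of $E^{\rm hyp}$ under that substitution, and your inference ``${\rm gr}$ is flat over $K[\![\hbar]\!]$, therefore the family of cycles is the constant extension $E^{\rm hyp}\otimes_K K[\![\hbar]\!]$ of its generic fibre'' is unjustified: flatness only rules out components contained in $\{\hbar=0\}$, and since the family over ${\rm Spec}\,K[\![\hbar]\!]$ is not proper, components whose defining equations involve $\nu/\hbar$ can escape as $\hbar\to 0$, making the special fibre strictly smaller than the generic one. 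A toy example with exactly the allowed (conic Lagrangian) shape is $K[\![\hbar]\!][z,\zeta]/(\hbar z-\nu)$: it is $\hbar$-torsion-free, its generic fibre is the scaled conormal $\{z=\nu/\hbar\}$ of a point, and its fibre at $\hbar=0$ is empty.

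What closes this gap in the paper is Proposition \ref{prop:defined over C}: every minimal prime of the associated graded, hence $E^{\rm hyp}$ itself, is defined over $\C$, i.e.\ independent of $\nu$ (proved from the contiguity isomorphism of Proposition \ref{prop:contiguity} by descent). Only because the components are $\nu$-free does the substitution $\nu\mapsto\nu/\hbar$ act trivially on them, so that the family is genuinely constant in $\hbar$ and its specialization at $\hbar=0$ returns $E^{\rm hyp}$; this is exactly how the paper passes from \eqref{eq:supp0} to the identification of the supports of ${\rm gr}(M^{\rm hyp}_0)$, ${\rm gr}(M^{\rm hyp}_{[\![\hbar]\!]})$ and ${\rm gr}(M^{\rm hyp}_{((\hbar))})$ via Lemma \ref{lem:identification}, and then matches multiplicities by the explicit composition-series argument at the end of \S\ref{subsec:likelihood formula}. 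Your proof must either invoke Proposition \ref{prop:defined over C} at the point where you declare the family constant, or reprove that $\nu$-independence; as written the ``constant extension'' step fails. Two further points you defer — that the $\hbar$-saturated filtration is still good with unchanged $CC$, and that cycle specialization over the DVR preserves multiplicities in this non-proper, non-finite-type setting — are precisely what the paper's presentation-based filtration satisfying \eqref{eq:intersection} and its direct length computation are designed to avoid, so they need proofs rather than expectations.
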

In the rest of this section, we prove Theorem \ref{thm:main result}.

\begin{theorem}\label{thm:3.7}
    $M_{[\![\hbar]\!]}^{\rm hyp}$ is a coherent left $D_{[\![\hbar]\!]}$-module.
\end{theorem}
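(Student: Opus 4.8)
The plan is to reduce the statement, via Theorem~\ref{thm:hbar isomorphism}, to a finiteness assertion for the classical limit $M^{\rm hyp}_0$ over the \emph{commutative} ring $\mathcal{O}_{T^*Z_K}\simeq D_{[\![\hbar]\!]}/\hbar D_{[\![\hbar]\!]}$, and then to extract that finiteness from the geometry of the likelihood fibration that already underlies the proof of Lemma~\ref{lem:no torsion}. First I would use the $D_{[\![\hbar]\!]}$-linear isomorphism $M^{\rm hyp}_{[\![\hbar]\!]}\simeq H^n((X/Z)_{K[\![\hbar]\!]},\omega)$ of Theorem~\ref{thm:hbar isomorphism} to replace the module in question by the relative $\hbar$-twisted de Rham cohomology. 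Since $D_{[\![\hbar]\!]}$ is left Noetherian, coherence is the same as local finite generation, so I may work on a chart carrying an \'etale coordinate $(U,g_1,\dots,g_N)$ over which $D_{[\![\hbar]\!]}$ is $\mathcal{O}_{Z_{[\![\hbar]\!]}}\langle\partial_1^\hbar,\dots,\partial_N^\hbar\rangle$ and $D_{[\![\hbar]\!]}/\hbar D_{[\![\hbar]\!]}\simeq\mathcal{O}_{T^*Z_K}$. I recall from Corollary~\ref{cor:no torsion} and the proof of Theorem~\ref{thm:hbar isomorphism} that $M^{\rm hyp}_{[\![\hbar]\!]}$ has no $\hbar$-torsion and is $\hbar$-adically separated.

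Next I would set $\hbar=0$. Applying the long exact cohomology sequence to $0\to\Omega^{\bullet}_{(X/Z)[\![\hbar]\!]}\xrightarrow{\,\hbar\,}\Omega^{\bullet}_{(X/Z)[\![\hbar]\!]}\to\Omega^{\bullet}_{X/Z}\to 0$ (the exact sequence behind \eqref{eq:27}) and using $\Omega^{n+1}_{X/Z}=0$ gives an isomorphism $M^{\rm hyp}_0=M^{\rm hyp}_{[\![\hbar]\!]}/\hbar M^{\rm hyp}_{[\![\hbar]\!]}\simeq\Omega^{n}_{X/Z}\big/\,\omega\wedge\Omega^{n-1}_{X/Z}$, a module over $\mathcal{O}_{T^*Z_K}$ on which the symbol $\zeta_l$ of $\partial_l^\hbar$ acts by multiplication by $-\nu_0 f^{-1}\partial_{g_l}f$. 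By an $\hbar$-adic Nakayama/limit argument---lifting a finite $\mathcal{O}_{T^*Z_K}$-generating set of $M^{\rm hyp}_0$ to $M^{\rm hyp}_{[\![\hbar]\!]}$ and exploiting the $\hbar$-torsion freeness and $\hbar$-adic separatedness just recalled, completing in $\hbar$ where needed---it then suffices to prove that $M^{\rm hyp}_0$ is a finitely generated $\mathcal{O}_{T^*Z_K}$-module.

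For that last point I would identify $\Omega^{n}_{X/Z}/\omega\wedge\Omega^{n-1}_{X/Z}$ with $\pi_{Z*}\mathcal{O}_{\mathcal{C}}$, where $\mathcal{C}\subset Z_K\times T^n_K$ is the likelihood critical locus $\{\nu_i f=\nu_0 x_i\partial_{x_i}f,\ i=1,\dots,n\}$, with $[\tfrac{dx_1}{x_1}\wedge\cdots\wedge\tfrac{dx_n}{x_n}]$ a cyclic generator and $\zeta_l$ acting by multiplication by $-\nu_0 f^{-1}\partial_{g_l}f$. Since $\nu$ is a transcendence basis over $\C$, the Bertini-type argument in the proof of Lemma~\ref{lem:no torsion} shows the $n$ equations defining $\mathcal{C}$ form a regular sequence, so $\mathcal{C}$ is purely $(\dim Z)$-dimensional and is quasi-finite over $Z_K$ away from the Euler discriminant $\nabla^\pi_\chi(Z)$; I would then check that the refined map $\mathcal{C}\to T^*Z_K$ sending $(z,x)$ to $z$ together with the covector $-\nu_0 f^{-1}d_zf(z,x)\in T^*_zZ_K$ is finite onto its image---the cotangent directions absorbing exactly the positive-dimensional fibres of $\mathcal{C}\to Z_K$ over $\nabla^\pi_\chi(Z)$---so that $\mathcal{O}_{\mathcal{C}}$ is finite over $\mathcal{O}_{T^*Z_K}$. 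Alternatively, one could invoke the $\hbar$-deformed pushforward formalism of \cite[\S5.3]{kuwagaki2022hbar}, under which $H^n((X/Z)_{K[\![\hbar]\!]},\omega)$ is the unique nonvanishing cohomology of the pushforward along the affine morphism $\pi_Z$ of the $\hbar$-holonomic module attached to $\mathcal{O}_{Z_K\times T^n_K}[1/f]\cdot f^{-\nu_0}x^\nu$, cf.\ Theorem~\ref{thm:relative_TCLI}.

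The hard part is precisely this finiteness of $M^{\rm hyp}_0$ over $\mathcal{O}_{T^*Z_K}$, equivalently the statement that $\mathcal{O}_{\mathcal{C}}$ becomes finite over $\mathcal{O}_{Z_K}$ once the $N$ multiplication operators $-\nu_0 f^{-1}\partial_{g_l}f$ are adjoined: the preceding reductions are formal homological algebra, whereas here the geometry of the likelihood fibration and the genericity of $\nu$ genuinely enter, and this is exactly the input that makes the characteristic cycle $CC(M^{\rm hyp}_0)$ well defined in the first place.
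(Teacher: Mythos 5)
Your formal reductions match the paper's own strategy almost exactly: invoke Theorem~\ref{thm:hbar isomorphism} to write $M^{\rm hyp}_{[\![\hbar]\!]}$ as $H^n((X/Z)_{K[\![\hbar]\!]},\omega)$, use $\hbar$-adic completeness (plus torsion-freeness from Lemma~\ref{lem:no torsion}) to reduce coherence over $D_{[\![\hbar]\!]}$ to finite generation of $M^{\rm hyp}_0\simeq \Omega^n_{X/Z}/\omega\wedge\Omega^{n-1}_{X/Z}$ over $\mathcal{O}_{T^*Z_K}$, and recognize that this module is supported on the family of likelihood equations mapped into $T^*Z_K$ via $\zeta_l=-\nu_0 f^{-1}\partial_{g_l}f$. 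All of that is correct and is exactly how the paper proceeds (its variety $W$ in \eqref{eq:family of likelihood equations} is the graph of your refined map $\mathcal{C}\to T^*Z_K$).

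However, at the decisive point you stop: the finiteness of $\mathcal{C}$ (equivalently of $W$) over $T^*Z_K$ is asserted with the heuristic that ``the cotangent directions absorb exactly the positive-dimensional fibres of $\mathcal{C}\to Z_K$,'' and you yourself flag it as the hard part. This is a genuine gap, and the heuristic does not suffice: quasi-finiteness of $\mathcal{C}\to Z_K$ away from $\nabla^\pi_\chi(Z)$ says nothing about sequences in $\mathcal{C}$ escaping to the toric boundary in the $x$-direction while the covector $-\nu_0 f^{-1}d_zf$ stays bounded; ruling out exactly that degeneration is the content of the theorem. The paper's proof closes this by passing to the algebraic closure $k$ of $K$, choosing a smooth toric compactification $Y$ of the torus, and applying a valuative criterion: if a formal disc $\gamma:\Delta\to Y\times T^*Z_k$ had $\gamma(\Delta^\circ)\subset W$ and $\gamma(0)$ on the toric boundary (with the $T^*Z_k$-component finite), then the residue at $t=0$ of $\gamma^*\bigl(-\omega_0+\nu_0\tfrac{dy}{y}+\sum_i\nu_i\tfrac{dx_i}{x_i}\bigr)$ would vanish; but since $\omega_0$ contributes no pole there, the residue equals $\langle\sum_i\rho_i\phi_i,\nu\rangle$ with $\rho_i>0$, which is nonzero precisely because $\nu$ is a transcendence basis. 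Hence $\overline{W}=W$, the (affine) morphism $W\to T^*Z_k$ is proper, and coherence of the direct image follows from \cite[Th\'eor\`eme 3.2.1]{EGA}. Note also that genericity of $\nu$ enters through this residue computation, not through the Bertini argument of Lemma~\ref{lem:no torsion}, which only gives the regular-sequence/pure-dimensionality statement and is logically independent of the properness you need. Without some version of this compactification-and-residue argument (or another proof of properness of the support over $T^*Z_K$), your proposal does not establish the theorem.
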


\begin{proof}
    In view of the fact that $M_{[\![\hbar]\!]}^{\rm hyp}=H^n((X/Z)_{K[\![\hbar]\!]},\omega)$ is $\hbar$-adically Hausdorff by Theorem \ref{thm:hbar isomorphism}, it is enough to show that $(K[\![\hbar]\!]/(\hbar))\otimes_{K[\![\hbar]\!]} H^n((X/Z)_{K[\![\hbar]\!]},\omega)=\frac{\Omega^{n}_{(X/Z)}(X)}{\omega\wedge\Omega^{n-1}_{(X/Z)}(X)}$ is a coherent $\mathcal{O}_{T^*Z_K}$-module.
    Let $k$ be the algebraic closure of $K$.
    This is because $H^n((X/Z)_{K[\![\hbar]\!]},\omega)$ is $\hbar$-adically complete.
    We set $\Omega_{X_k/Z_k}^\ell:=k\otimes_K\Omega_{X/Z}^\ell$.
    By extending the coefficients, it is enough to prove that $\frac{\Omega^{n}_{X_k/Z_k}}{\omega\wedge\Omega^{n-1}_{X_k/Z_k}}$ is a coherent $\mathcal{O}_{T^*Z_k}$-module.
    We choose a system of \'etale coordinates $(V;g_1,\dots,g_N)$.
    Namely, $V\subset Z_k$ is an open subset, $g_1,\dots,g_N\in\mathcal{O}_{Z_k}(V)$, and $(g_1,\dots,g_N):V\to {\rm Spec}(k[t_1,\dots,t_N])$ is an \'etale morphism.
    A system of \'etale coordinates $(V;g_1,\dots,g_N)$ induces an isomorphism $V\times k^N\ni (z,\zeta)\mapsto \sum_{j=1}^N \zeta_jdg_j(z)\in T^*V\subset T^*Z_k$.
    Let $\omega_{0}$ be the tautological one-form on $T^*Z_k$.
    On $T^*V$, the tautological one-form $\omega_0$ is represented by $\sum_{j=1}^N \zeta_jdg_j(z)$. 
    A closed subvariety of $T_k\times T^*Z_k$ defined by
    \begin{equation}\label{eq:family of likelihood equations}
        W:=\{ (y,x,z,\zeta)\in T_k\times T^*Z_k\mid -\omega_0(z,\zeta)+\nu_0\frac{dy}{y}+\sum_{i=1}^n\nu_i\frac{dx_i}{x_i}=0,\ \ \ 1=yf(x;z)\}
    \end{equation}
    is the support of  $\frac{\Omega^{n}_{(X_k/Z_k)}(X_k)}{\omega\wedge\Omega^{n-1}_{(X_k/Z_k)}(X_k)}$ as an $\mathcal{O}_{T_k\times T^*Z_k}$-module.
    Let $Y$ be a toric compactification of $T_k$ associated to a smooth fan $\Sigma$.
    If the closure $\overline{W}$ of $W$ in $Y\times T^*Z_k$ is $V$ itself, it follows from the coherence of proper direct image sheaves (\cite[Th\'eor\`eme  3.2.1]{EGA}) that $\frac{\Omega^{n}_{(X_k/Z_k)}(X_k)}{\omega\wedge\Omega^{n-1}_{(X_k/Z_k)}(X_k)}$ is a coherent $\mathcal{O}_{T^*Z_k}$-module.
    Let us pick a maximal cone $C$ of $\Sigma$ and assume that $\phi_0,\phi_1,\dots,\phi_n$ be the primitive generators of its rays.
    The affine space associated to $C$ is denoted by $U_C={\rm Spec}\, k[\xi_0,\dots,\xi_n]$.
    Let $\Delta$ (resp. $\Delta^\circ$) denotes a formal disc (resp. formal punctured disc) ${\rm Spec}\, k[\![t]\!]$ (resp. ${\rm Spec}\, k[\![t]\!][t^{-1}]$). 
    The punctured formal disc $\Delta^\circ$ is the generic point of $\Delta$.
    By a well-known argument (see, e.g., \cite[Lemma 2.5]{sattelberger2023maximum}), a closed point $p\in Y\times T^*Z_k$ is in the closure $\overline{W}$ if there exists a morphism $\gamma:\Delta\to Y\times T^*Z_k$ such that $\gamma(\Delta^\circ)\in W$ and $\gamma(0)=p$.
    We pick such a curve $\gamma$ and assume $\gamma(0)\in\cap_{i=0}^a\{ \xi_i=0\}\setminus\cup_{i=a+1}^n\{\xi_i=0\}$ for some $0\leq a\leq n$.
    Then, it follows that the residue of a one-form $\gamma^*(-\omega_0+\nu_0\frac{dy}{y}+\sum_{i=1}^n\nu_i\frac{dx_i}{x_i})$ at $t=0$ must be zero.
    The residue is given by $\langle\sum_{i=0}^a\rho_i\phi_i,\nu\rangle$, where $\rho_i>0$ is the order of $\xi_i\circ\gamma$ at $t=0$ for $i=0,\dots,a$. This is a non-zero element of $k$.
    This is a contradiction and it follows that $\overline{W}=W$.
\end{proof}

\begin{remark}
    The general theory of $D_{[\![\hbar]\!]}$-modules is not parallel to that of usual $D$-modules.
    A difference is that pushforward functor does not preserve coherency.
    For example, the sheaf of meromorphic functions $\mathcal{O}_{\mathbb{C}}(*0)[\![\hbar]\!]$ is not coherent as a $D_{[\![\hbar]\!]}$-module.
\end{remark}

\begin{example}
    We illustrate the role of $\omega_0$ by a simple example.
    Let us consider the case of $Z=\C$.
    We trivialize the cotangent bundle as $T^*Z\simeq\C\times\C={\rm Spec}\,\C[z,\zeta]$.
    Let $f(x;z)$ be $x+z$ and let $J_0$ be the ideal of $K[y,x,z,\zeta]$ generated by 
    $1-yf(x;z),\frac{\nu}{s}-yx,\zeta+sy$.
    It follows from Theorem \ref{thm:3.7} that the quotient $K[y,x,z,\zeta]/J_0$ is a finitely generated $K[z,\zeta]$-module.
    In fact, an identity $y\equiv -\frac{\zeta}{s}$ modulo $J_0$ is easily seen.
    Moreover, 
    $0\equiv f(x;z)(\frac{\nu}{s}-yx)\equiv \frac{\nu}{s}(x+z)-x $, hence $x\equiv\frac{\nu z}{s-\nu}$ modulo $J_0$.
    Thus, $K[y,x,z,\zeta]/J_0\simeq K[z,\zeta]/(z\zeta+s-\nu)$ as $K[z,\zeta]$-modules.
    On the other hand, $K[y,x,z,\zeta]/J_0$ is not a coherent $K[z]$-module.
\end{example}

Since $M_{[\![\hbar]\!]}^{\rm hyp}$ is $\hbar$-torsion free, the canonical morphism $M^{\rm hyp}_{[\![\hbar]\!]}\to M^{\rm hyp}_{((\hbar))}$ is injective.
We equip $M^{\rm hyp}_{[\![\hbar]\!]}$ and $M^{\rm hyp}_{((\hbar))}$ with good filtrations $F_kM^{\rm hyp}_{[\![\hbar]\!]}$ and $F_kM^{\rm hyp}_{((\hbar))}$ so that an identity 
\begin{equation}\label{eq:intersection}
    F_kM^{\rm hyp}_{[\![\hbar]\!]}=F_kM^{\rm hyp}_{((\hbar))}\cap M^{\rm hyp}_{[\![\hbar]\!]}
\end{equation}
holds true.
In fact, let us take a finite presentation $M^{\rm hyp}_{[\![\hbar]\!]}\simeq D_{[\![\hbar]\!]}^\ell/N$ where $N$ is a $D_{[\![\hbar]\!]}$-submodule of a free module $D_{[\![\hbar]\!]}^\ell$.
Since $M^{\rm hyp}_{[\![\hbar]\!]}$ is torsion free by Corollary \ref{cor:no torsion}, it follows that $N=N_{((\hbar))}\cap D_{[\![\hbar]\!]}^\ell$ where $N_{((\hbar))}:=K((\hbar))\otimes_KN$.
Then, we may set $F_kM^{\rm hyp}_{[\![\hbar]\!]}:=(F_kD_{[\![\hbar]\!]})^\ell/N\cap(F_kD_{[\![\hbar]\!]})^\ell$ and $F_kM^{\rm hyp}_{((\hbar))}:=(F_kD_{((\hbar))})^\ell/N\cap(F_kD_{((\hbar))})^\ell$ to obtain the equality \eqref{eq:intersection}.
Let us define a filtration $F_kM_0^{\rm hyp}$ by $
    F_kM_0^{\rm hyp}:=\left(K[\![\hbar]\!]/(\hbar)\right)\otimes_{K[\![\hbar]\!]}F_kM_{[\![\hbar]\!]}^{\rm hyp}\hookrightarrow M_0^{\rm hyp}$.
The last morphism is an injection by \eqref{eq:intersection}.
In fact, if $\overline{m}\in M_{[\![\hbar]\!]}^{\rm hyp}/F_kM_{[\![\hbar]\!]}^{\rm hyp}$ is an $\hbar$-torsion, it follows that $m\in F_kM_{((\hbar))}^{\rm hyp}\cap M_{[\![\hbar]\!]}^{\rm hyp}=F_kM_{[\![\hbar]\!]}^{\rm hyp}$, which proves that $M_{[\![\hbar]\!]}^{\rm hyp}/F_kM_{[\![\hbar]\!]}^{\rm hyp}$ has no $\hbar$-torsion.
By the right-exactness of tensor product, we see that ${\rm gr}_k(M_0^{\rm hyp})=\left(K[\![\hbar]\!]/(\hbar)\right)\otimes_{K[\![\hbar]\!]} {\rm gr}_k(M_{[\![\hbar]\!]}^{\rm hyp})$
and ${\rm gr}(M_0^{\rm hyp})=\left(K[\![\hbar]\!]/(\hbar)\right)\otimes_{K[\![\hbar]\!]} {\rm gr}(M_{[\![\hbar]\!]}^{\rm hyp})$.
In particular, ${\rm gr}(M_0^{\rm hyp})$ is finitely generated.
Therefore, $\{ F_kM_0^{\rm hyp}\}_k$ is a good filtration.
We note that ${\rm gr}(M_{[\![\hbar]\!]}^{\rm hyp})$ has no $\hbar$-torsion.
In fact, if an element $\overline{m}\in {\rm gr}_k(M_{[\![\hbar]\!]}^{\rm hyp})$ satisfies $\hbar^\ell\overline{m}=0$ for some $\ell\geq 1$, it follows that $m\in M_{[\![\hbar]\!]}^{\rm hyp}\cap F_{k-1}M_{((\hbar))}^{\rm hyp}=F_kM_{[\![\hbar]\!]}^{\rm hyp}$ in view of \eqref{eq:intersection}.
This implies that $\overline{m}=0$ in ${\rm gr}_k(M_{[\![\hbar]\!]}^{\rm hyp})$.
Since ${\rm gr}(M_{[\![\hbar]\!]}^{\rm hyp})$ has no $\hbar$-torsion and any element of ${\rm Supp}_0({\rm gr}(M_{[\![\hbar]\!]}^{\rm hyp}))$ is an associated prime, we obtain
\begin{equation}\label{eq:supp0}
{\rm Supp}_0({\rm gr}(M_{((\hbar))}^{\rm hyp}))=\{ \mathfrak{p}\in {\rm Supp}_0({\rm gr}(M_{[\![\hbar]\!]}^{\rm hyp}))\mid\hbar\notin\mathfrak{p}\}={\rm Supp}_0({\rm gr}(M_{[\![\hbar]\!]}^{\rm hyp})).
\end{equation}
Now let us consider an isomorphism $K((\hbar))\otimes_KM^{\rm hyp}\simeq M^{\rm hyp}_{((\hbar))}$ where the field extension $K\hookrightarrow K((\hbar))$ is given by the scaling \eqref{eq:scaling}.
This isomorphism follows immediately by comparing the generators \eqref{eq:generators of J} of $J$ and those of $J_{[\![\hbar]\!]}$ given by \eqref{eq:generators of J_h} through the isomorphism \eqref{eq:DD scaling}.
Thus, by Proposition \ref{prop:defined over C}, it follows that any element $\mathfrak{p}$ in the middle term of \eqref{eq:supp0} is defined over $\C$, hence so is $CC(M_{[\![\hbar]\!]}^{\rm hyp})$.
We need a lemma on the support of a module.

\begin{lemma}\label{lem:identification}
    Let $A$ be a Noetherian commutative ring and let $I\subset A$ be an ideal contained in the Jacobson radical of $A$.
    Let $L$ be a finitely generated $A$-module which is Hausdorff with respect to the $I$-adic topology.
    Then, the following identity holds true:
    \begin{equation}
        {\rm Supp}_{A/I}(L/I\cdot L)\simeq{\rm Supp}_A(L)\cap V_{{\rm Spec}(A)}(I).
    \end{equation}
    Here, the identification is given by the correspondence $\mathfrak{p}_0\mapsto\mathfrak{p}:=\pi^{-1}(\mathfrak{p}_0)$ for any $\mathfrak{p}_0\in {\rm Supp}_{A/I}(L/I\cdot L)$ where $\pi:A\to A/I$ is the quotient map.
\end{lemma}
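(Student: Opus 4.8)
The plan is to reduce the statement to a fiberwise computation at a single prime and then invoke Nakayama's lemma. First I would recall the standard order-preserving bijection $\mathfrak{p}_0\mapsto\pi^{-1}(\mathfrak{p}_0)$ between ${\rm Spec}(A/I)$ and the closed subset $V_{{\rm Spec}(A)}(I)$; under this identification, writing $\mathfrak{p}=\pi^{-1}(\mathfrak{p}_0)$, the assertion of the lemma amounts to the equivalence ``$\mathfrak{p}_0\in{\rm Supp}_{A/I}(L/I\cdot L)$ $\Longleftrightarrow$ $\mathfrak{p}\in{\rm Supp}_A(L)$'' for every prime $\mathfrak{p}\supseteq I$. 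The key elementary input is the localization identity $(L/I\cdot L)_{\mathfrak{p}_0}\simeq L_{\mathfrak{p}}/IL_{\mathfrak{p}}$, which I would obtain from the canonical isomorphism $(A/I)_{\mathfrak{p}_0}\simeq A_{\mathfrak{p}}/IA_{\mathfrak{p}}$ together with the flatness of localization and its commutation with cokernels.

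Granting this, the direction ``$\Leftarrow$'', which is where the hypotheses enter, runs as follows. Since $A$ is Noetherian and $L$ is finitely generated, $L_{\mathfrak{p}}$ is a finitely generated module over the local ring $(A_{\mathfrak{p}},\mathfrak{p}A_{\mathfrak{p}})$; if $\mathfrak{p}\in{\rm Supp}_A(L)$ then $L_{\mathfrak{p}}\neq 0$, so Nakayama's lemma yields $\mathfrak{p}A_{\mathfrak{p}}\cdot L_{\mathfrak{p}}\subsetneq L_{\mathfrak{p}}$. Because $I\subseteq\mathfrak{p}$, the image of $I$ in $A_{\mathfrak{p}}$ lies in $\mathfrak{p}A_{\mathfrak{p}}$, hence $IL_{\mathfrak{p}}\subseteq\mathfrak{p}A_{\mathfrak{p}}L_{\mathfrak{p}}\subsetneq L_{\mathfrak{p}}$, and therefore $(L/I\cdot L)_{\mathfrak{p}_0}\simeq L_{\mathfrak{p}}/IL_{\mathfrak{p}}\neq 0$. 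The converse ``$\Rightarrow$'' is immediate from the same localization identity: a nonzero quotient $L_{\mathfrak{p}}/IL_{\mathfrak{p}}$ forces $L_{\mathfrak{p}}\neq 0$. The global hypotheses that $I$ lies in the Jacobson radical and that $L$ is $I$-adically Hausdorff are more than enough for this argument (by Krull's intersection theorem they already force $L\neq 0\Rightarrow L\neq I\cdot L$), but the local Nakayama input above is all that the support statement strictly requires.

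To conclude I would assemble the pieces: the map $\mathfrak{p}_0\mapsto\pi^{-1}(\mathfrak{p}_0)$ is injective because ${\rm Spec}(A/I)\hookrightarrow{\rm Spec}(A)$ is, it takes values in ${\rm Supp}_A(L)\cap V_{{\rm Spec}(A)}(I)$ by the ``$\Rightarrow$'' direction, and it surjects onto that set by the ``$\Leftarrow$'' direction, giving the claimed bijective identification. I do not anticipate a genuine obstacle; the only point that requires care is the localization identity $(L/I\cdot L)_{\mathfrak{p}_0}\simeq L_{\mathfrak{p}}/IL_{\mathfrak{p}}$, which makes ``support of the quotient over $A/I$'' literally coincide with ``support of $L$ over $A$ restricted to $V_{{\rm Spec}(A)}(I)$'', after which Nakayama's lemma finishes the argument.
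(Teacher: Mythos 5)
Your proof is correct and follows essentially the same route as the paper: identify $(L/I\cdot L)_{\mathfrak{p}_0}$ with $L_{\mathfrak{p}}/I_{\mathfrak{p}}L_{\mathfrak{p}}$ via $(A/I)_{\mathfrak{p}_0}\simeq A_{\mathfrak{p}}/I_{\mathfrak{p}}$ and then kill $L_{\mathfrak{p}}=I_{\mathfrak{p}}L_{\mathfrak{p}}$ by a Nakayama-type argument, the paper merely phrasing this last step through Krull's intersection theorem ($L_{\mathfrak{p}}=\bigcap_n I_{\mathfrak{p}}^nL_{\mathfrak{p}}=0$) instead of Nakayama directly. Your side remark that the global Jacobson-radical and Hausdorff hypotheses are not strictly needed for the support identity is also accurate, since the localized ideal automatically lies in the maximal ideal of $A_{\mathfrak{p}}$.
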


\begin{proof}
    We set $L_0:=L/I\cdot L$.
    Since $(A/I)_{\mathfrak{p}_0}=A_\mathfrak{p}/I_\mathfrak{p}$ by \cite[Theorem 4.2]{Matsumura}, it follows that the correspondence is well-defined.
    It is obviously injective.
    We prove that it is surjective.
    For any element $\mathfrak{p}\in {\rm Supp}_A(L)\cap V_{{\rm Spec}(A)}(I)$, we set $\mathfrak{p}_0:=\pi(\mathfrak{p})$.
    Suppose that $(L_0)_{\mathfrak{p}_0}=0$.
    Then, it follows that $L_\mathfrak{p}=I_\mathfrak{p}\cdot L_\mathfrak{p}$, hence $L_\mathfrak{p}=\cap_{n>0}I_\mathfrak{p}^n\cdot L_\mathfrak{p}=0$.
    The last equality follows from the definition of the Jacobson radical and \cite[Theorem 8.9]{Matsumura}.
\end{proof}

By Lemma \ref{lem:identification} and \eqref{eq:supp0}, there is a bijection ${\rm Supp}_0({\rm gr}(M^{\rm hyp}_0))\ni\mathfrak{p}_0\mapsto \pi^{-1}(\mathfrak{p}_0)=\mathfrak{p}\in{\rm Supp}_0({\rm gr}(M^{\rm hyp}_{[\![\hbar]\!]}))$.
Since any $\mathfrak{p}\in{\rm Supp}_0({\rm gr}(M^{\rm hyp}_{[\![\hbar]\!]}))$ is defined over $\C$, it follows that we can identify ${\rm Supp}_0({\rm gr}(M^{\rm hyp}_0))$ with ${\rm Supp}_0({\rm gr}(M^{\rm hyp}_{[\![\hbar]\!]}))$.
Thus, we can identify three objects ${\rm Supp}_0({\rm gr}(M^{\rm hyp}_0))$, ${\rm Supp}_0({\rm gr}(M^{\rm hyp}_{((\hbar))}))$ and ${\rm Supp}_0({\rm gr}(M^{\rm hyp}_{[\![\hbar]\!]}))$.
To prove Theorem \ref{thm:main result}, it remains to study multiplicities.

\begin{proposition}
    For any $\mathfrak{p}\in {\rm Supp}_0({\rm gr}(M^{\rm hyp}_{[\![\hbar]\!]}))$, one has the following equalities:
    $$
    {\rm length}_{\mathcal{O}_{T^*Z,\mathfrak{p}_0}}({\rm gr}(M^{\rm hyp}_{0})_{\mathfrak{p}_0})
    =
    {\rm length}_{\mathcal{O}_{T^*Z_{[\![\hbar]\!]},\mathfrak{p}}}({\rm gr}(M^{\rm hyp}_{[\![\hbar]\!]})_\mathfrak{p})
    =
    {\rm length}_{\mathcal{O}_{T^*Z_{((\hbar))},\mathfrak{p}}}({\rm gr}(M^{\rm hyp}_{((\hbar))})_\mathfrak{p}).
    $$
\end{proposition}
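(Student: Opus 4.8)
The plan is to establish the two equalities separately, both by exploiting that all three graded modules are compatibly obtained from ${\rm gr}(M^{\rm hyp}_{[\![\hbar]\!]})$, which by the preceding discussion is $\hbar$-torsion free and whose minimal primes $\mathfrak{p}$ all satisfy $\hbar\notin\mathfrak{p}$ and are defined over $\C$. For the second equality, ${\rm gr}(M^{\rm hyp}_{((\hbar))})$ is obtained from ${\rm gr}(M^{\rm hyp}_{[\![\hbar]\!]})$ by inverting $\hbar$, so ${\rm gr}(M^{\rm hyp}_{((\hbar))})_\mathfrak{p}$ is just the localization of ${\rm gr}(M^{\rm hyp}_{[\![\hbar]\!]})_\mathfrak{p}$ at the multiplicative set $\{1,\hbar,\hbar^2,\dots\}$; since $\hbar\notin\mathfrak{p}$, this multiplicative set already consists of units in the local ring $\mathcal{O}_{T^*Z_{[\![\hbar]\!]},\mathfrak{p}}$, so the localization map is an isomorphism and length is preserved verbatim. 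The localizations at $\mathfrak{p}$ over the two (isomorphic) local rings $\mathcal{O}_{T^*Z_{[\![\hbar]\!]},\mathfrak{p}}$ and $\mathcal{O}_{T^*Z_{((\hbar))},\mathfrak{p}}$ then agree, giving the right-hand equality.

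For the first equality, the idea is to degenerate in $\hbar$. Using the good filtrations chosen so that \eqref{eq:intersection} holds and the fact (established above) that ${\rm gr}(M^{\rm hyp}_{[\![\hbar]\!]})$ has no $\hbar$-torsion, one has ${\rm gr}(M^{\rm hyp}_0)=(K[\![\hbar]\!]/(\hbar))\otimes_{K[\![\hbar]\!]}{\rm gr}(M^{\rm hyp}_{[\![\hbar]\!]})$, i.e. ${\rm gr}(M^{\rm hyp}_0)={\rm gr}(M^{\rm hyp}_{[\![\hbar]\!]})/\hbar\,{\rm gr}(M^{\rm hyp}_{[\![\hbar]\!]})$ with $\hbar$ a nonzerodivisor on ${\rm gr}(M^{\rm hyp}_{[\![\hbar]\!]})$. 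Localizing the short exact sequence $0\to {\rm gr}(M^{\rm hyp}_{[\![\hbar]\!]})\xrightarrow{\hbar}{\rm gr}(M^{\rm hyp}_{[\![\hbar]\!]})\to {\rm gr}(M^{\rm hyp}_0)\to 0$ at $\mathfrak{p}$ and invoking that $\mathcal{O}_{T^*Z_{[\![\hbar]\!]},\mathfrak{p}}$ is a Noetherian local ring on which $\hbar$ lies in the maximal ideal (indeed $\hbar$ lies in every maximal ideal here, since $\mathcal{O}_{T^*Z_{[\![\hbar]\!]}}$ is complete along $\hbar$), one reduces the claim to the standard fact that for a finitely generated module $N$ over a Noetherian local ring $(R,\mathfrak{m})$ with $t\in\mathfrak{m}$ a nonzerodivisor on $N$, one has ${\rm length}_R(N/tN)={\rm length}_{R/tR}(N/tN)$ computed over $R/tR$ coincides with the length contribution of $N$ at $\mathfrak{p}$ — more precisely, that the multiplicity of $N$ along the minimal prime $\mathfrak{p}$ of $N/tN$ lying over the minimal prime $\mathfrak{p}_0$ equals the length of $(N/tN)_{\mathfrak p_0}$ over $(R/tR)_{\mathfrak p_0}$. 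This is where I would be careful: one should pass to the localizations $R_\mathfrak{p}\to (R/tR)_{\mathfrak p_0}=R_\mathfrak{p}/tR_\mathfrak{p}$, note that $tR_\mathfrak{p}$ is primary for the maximal ideal (since $\mathfrak{p}$ has height one over the minimal prime $\mathfrak{p}_0'$ of $N$, by the $\hbar$-torsion-freeness of ${\rm gr}(M^{\rm hyp}_{[\![\hbar]\!]})$), and then apply the associativity formula for multiplicities / the standard dévissage computing ${\rm length}_{R_\mathfrak{p}/tR_\mathfrak{p}}(N_\mathfrak{p}/tN_\mathfrak{p})$ in terms of ${\rm length}_{R_\mathfrak{p}}$ of the filtration quotients of $N_\mathfrak{p}$ by powers of $\mathfrak{p}_0'$. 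Alternatively, and more cleanly, one localizes first at $\mathfrak p$: since $\hbar$ is a nonzerodivisor on the finite-length-over-$(R_{\mathfrak p})_{\hbar\text{-adic}}$... — the shortest route is: $({\rm gr} M^{\rm hyp}_{[\![\hbar]\!]})_\mathfrak{p}$ is a finitely generated module over the local ring $R_\mathfrak p$ with $\hbar$ a parameter, and $\dim\mathrm{Supp}=1$, so ${\rm length}_{R_\mathfrak p}(({\rm gr} M^{\rm hyp}_{[\![\hbar]\!]})_\mathfrak{p}/\hbar)$ equals the Hilbert–Samuel multiplicity $e(\hbar; ({\rm gr} M^{\rm hyp}_{[\![\hbar]\!]})_\mathfrak{p})$, which by the $\hbar$-torsion-freeness and additivity of multiplicities equals ${\rm length}_{(R/\hbar R)_{\mathfrak p_0}}$ of the generic part, i.e. exactly ${\rm length}_{\mathcal O_{T^*Z,\mathfrak p_0}}({\rm gr}(M^{\rm hyp}_0)_{\mathfrak p_0})$.

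The main obstacle, then, is the bookkeeping in this last reduction: making precise that ${\rm gr}(M^{\rm hyp}_{[\![\hbar]\!]})_\mathfrak{p}$ has a composition series whose successive quotients, after killing $\hbar$, recover precisely the length of ${\rm gr}(M^{\rm hyp}_0)$ at $\mathfrak{p}_0$, with no loss from embedded behaviour. This is handled by the fact that $\hbar$ is a nonzerodivisor, so multiplication by $\hbar$ is injective on every step of a composition series compatible with the $\hbar$-adic structure, and by the compatibility ${\rm gr}_k(M_0^{\rm hyp})=(K[\![\hbar]\!]/(\hbar))\otimes {\rm gr}_k(M_{[\![\hbar]\!]}^{\rm hyp})$ already recorded, which guarantees the filtration is strict. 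Once this is set up, Theorem~\ref{thm:main result} follows at once: combining the proposition with \eqref{eq:supp0}, with the identification of ${\rm Supp}_0$'s via Lemma~\ref{lem:identification}, and with the scaling isomorphism $K((\hbar))\otimes_K M^{\rm hyp}\simeq M^{\rm hyp}_{((\hbar))}$ and Proposition~\ref{prop:defined over C} (so that $CC(M^{\rm hyp}_{((\hbar))})$, base-changed back along $\C\hookrightarrow K$, is $E^{\rm hyp}$), we get $E^{\rm hyp}=CC(M^{\rm hyp}_{((\hbar))})=CC(M^{\rm hyp}_{[\![\hbar]\!]})=CC(M^{\rm hyp}_0)$.
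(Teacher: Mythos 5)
Your argument for the right-hand equality is correct and, if anything, simpler than the paper's: since $\hbar\notin\mathfrak{p}$ by \eqref{eq:supp0}, $\hbar$ is already invertible in $\mathcal{O}_{T^*Z_{[\![\hbar]\!]},\mathfrak{p}}$, so localizing at $\mathfrak{p}$ identifies ${\rm gr}(M^{\rm hyp}_{[\![\hbar]\!]})_{\mathfrak{p}}$ with ${\rm gr}(M^{\rm hyp}_{((\hbar))})_{\mathfrak{p}}$ over one and the same local ring. Your route to the left-hand equality (length of the $\hbar$-fibre equals the Hilbert--Samuel multiplicity $e(\hbar;-)$, then the associativity formula) is also genuinely different from the paper's, which instead filters the localized module by submodules with subquotients $K[\![\hbar]\!]\otimes_K\kappa(\mathfrak{p}_0)$ and pushes that filtration through $-\otimes_{K[\![\hbar]\!]}K[\![\hbar]\!]/(\hbar)$ and $-\otimes_{K[\![\hbar]\!]}K((\hbar))$, using ${\rm Tor}_1$-vanishing to keep it strict; your version trades that explicit d\'evissage for standard multiplicity theory.

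As written, however, the left-hand argument has two concrete gaps. First, you localize at $\mathfrak{p}$ and assert that $\hbar$ lies in the maximal ideal of $\mathcal{O}_{T^*Z_{[\![\hbar]\!]},\mathfrak{p}}$ because the ring is ``complete along $\hbar$''. This is false: $\mathfrak{p}$ is a minimal prime of the support and $\hbar\notin\mathfrak{p}$, so $\hbar$ is a unit in that local ring and ${\rm gr}(M^{\rm hyp}_0)$ localizes to zero there; moreover $\mathcal{O}_{T^*Z_{[\![\hbar]\!]}}\simeq K[\![\hbar]\!]\otimes_K\mathcal{O}_{T^*Z_K}$ is not $\hbar$-adically complete, and $\hbar$ is not in its Jacobson radical (any maximal ideal containing $1-\hbar\zeta$, $\zeta$ a fibre coordinate, avoids $\hbar$). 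The degeneration must be carried out at $\widetilde{\mathfrak{p}}:=\pi^{-1}(\mathfrak{p}_0)=\mathfrak{p}+(\hbar)$, which is \emph{not} a minimal prime of the support; you drift toward this prime mid-paragraph but never cleanly separate it from $\mathfrak{p}$, and since the proposition pairs the length at $\mathfrak{p}$ with the length at $\mathfrak{p}_0$, this bookkeeping is the whole content. Second, in your ``shortest route'' you credit $\hbar$-torsion-freeness and additivity for the identity $e(\hbar;N_{\widetilde{\mathfrak{p}}})={\rm length}_{A_{\mathfrak{p}}}(N_{\mathfrak{p}})$, where $N={\rm gr}(M^{\rm hyp}_{[\![\hbar]\!]})$ and $A=\mathcal{O}_{T^*Z_{[\![\hbar]\!]}}$. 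Torsion-freeness only gives ${\rm length}(N_{\widetilde{\mathfrak{p}}}/\hbar N_{\widetilde{\mathfrak{p}}})=e(\hbar;N_{\widetilde{\mathfrak{p}}})$; to evaluate the multiplicity you still need (i) that $\mathfrak{p}$ is the \emph{only} minimal prime of ${\rm Supp}(N)$ contained in $\widetilde{\mathfrak{p}}$, and (ii) that $e(\hbar;(A/\mathfrak{p})_{\widetilde{\mathfrak{p}}})=1$. Both follow precisely from the fact, established just before the proposition, that every $\mathfrak{p}\in{\rm Supp}_0({\rm gr}(M^{\rm hyp}_{[\![\hbar]\!]}))$ is extended from a prime of $\mathcal{O}_{T^*Z}$ (defined over $\C$); torsion-freeness alone does not suffice, as $N=K[\![\hbar]\!][x]/(x^2-\hbar)$ shows: it is $\hbar$-torsion free and has length $1$ at its minimal prime, yet $N/\hbar N=K[x]/(x^2)$ has length $2$ at $(x)$. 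You quote the defined-over-$\C$ fact in your preamble but never invoke it at this step, which is exactly where the equality could otherwise fail. Once (i) and (ii) are supplied, your multiplicity argument does close and gives a valid alternative to the paper's composition-series proof.
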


\begin{proof}
    Let $0\subset N_1\subset \cdots \subset N_\ell={\rm gr}(M^{\rm hyp}_{[\![\hbar]\!]})_\mathfrak{p}$ be a composition series of ${\rm gr}(M^{\rm hyp}_{[\![\hbar]\!]})_\mathfrak{p}$.
    Since $\mathcal{O}_{T^*Z,\mathfrak{p}}$ is a Noetherian local ring, it follows that $N_{j}/N_{j-1}\simeq \mathcal{O}_{T^*Z_{[\![\hbar]\!]},\mathfrak{p}}/\mathfrak{p}\mathcal{O}_{T^*Z_{[\![\hbar]\!]},\mathfrak{p}}\simeq K[\![\hbar]\!]\otimes_K\mathcal{O}_{T^*Z,\mathfrak{p}_0}/\mathfrak{p}_0\mathcal{O}_{T^*Z,\mathfrak{p}_0}$
    for any $j=1,\dots,\ell$.
    Since ${\rm Tor}^{K[\![\hbar]\!]}_1(N_j/N_{j-1},K[\![\hbar]\!]/(\hbar))=0$, the natural morphism $(K[\![\hbar]\!]/(\hbar))\otimes_{K[\![\hbar]\!]}N_{j-1}\to (K[\![\hbar]\!]/(\hbar))\otimes_{K[\![\hbar]\!]}N_j$ is an injection and that $(K[\![\hbar]\!]/(\hbar))\otimes_{K[\![\hbar]\!]}N_j/N_{j-1}\simeq \mathcal{O}_{T^*Z,\mathfrak{p}_0}/\mathfrak{p}_0\mathcal{O}_{T^*Z,\mathfrak{p}_0}$.
    Hence, $0\subset (K[\![\hbar]\!]/(\hbar))\otimes_{K[\![\hbar]\!]}N_1\subset \cdots \subset (K[\![\hbar]\!]/(\hbar))\otimes_{K[\![\hbar]\!]}N_\ell={\rm gr}(M^{\rm hyp}_{0})_{\mathfrak{p}_0}$ is a composition series of ${\rm gr}(M^{\rm hyp}_{0})_{\mathfrak{p}_0}$.
    Likewise, $0\subset K((\hbar))\otimes_{K[\![\hbar]\!]}N_1\subset \cdots \subset K((\hbar))\otimes_{K[\![\hbar]\!]}N_\ell={\rm gr}(M^{\rm hyp}_{((\hbar))})_\mathfrak{p}$ is a composition series of ${\rm gr}(M^{\rm hyp}_{((\hbar))})_\mathfrak{p}$.
\end{proof}

\subsection{A geometric description of Euler discriminant}\label{subsec:geometric description}

Let us first consider the homogenized cotangent bundle of $Z$.
This is simply defined as a direct sum $E:=\underline{\C}_Z\oplus T^*Z$ where $\underline{\C}_Z$ is the trivial bundle of rank one over $Z$.
Let $0_E$ be the global zero section of $E$.
The homogenized cotangent bundle is defined as the quotient bundle $\mathbb{P}^*Z:=(E\setminus 0_E)//\C^*$ where $\C^*$ acts by fiberwise scaling.
The symbol $\pi_{\mathbb{P}^*Z}:\mathbb{P}^*Z{\to} Z$ denotes the associated projection.
The morphism $T^*Z\ni(z;\zeta)\mapsto (z;1,\zeta)\in E$ naturally induces a morphism $\iota_{T^*Z}:T^*Z\to \mathbb{P}^*Z$.
A point in $\mathbb{P}^*Z$ is symbolically denoted by $(z;[\tau:\zeta])$.
We write $H_\infty\subset \mathbb{P}^*Z$ for the "hyperplane at infinity" in the fiber direction defined by $H_\infty:=\{\tau=0\}$.
$H_\infty$ is also identified with a quotient $(T^*Z\setminus 0_{T^*Z})//\C^*$ where $0_{T^*Z}$ is the global zero section of $T^*Z$ and $\C^*$ acts by fiberwise scaling.
Let $\pi_{T^*Z}:T^{n+1}\times T^*Z{\to} T^*Z$ be the natural projection.
To simplify the notation, we write $V_0$ for the closure of ${\pi_{T^*Z}(V(J_0))}$ in $T^*Z$.
We write $\mathbb{P}{\rm Char}(M^{\rm hyp})$ for the image of ${\rm Char}(M^{\rm hyp})\setminus 0_{T^*Z}$ in $H_\infty\simeq (T^*Z\setminus 0_{T^*Z})//\C^*$.
In view of a standard argument on initial ideals (\cite[Theorem 15.17]{eisenbud2013commutative}) and Theorem \ref{thm:main result}, the variety $\mathbb{P}{\rm Char}(M^{\rm hyp})$ is identified with $\overline{\iota_{T^*Z}(V_0)}\cap H_\infty$ where the closure $\overline{\bullet}$ is taken in $\mathbb{P}^*Z$.
By the definition of the singular locus and Corollary \ref{cor:ED=Sing}, we obtain the following description.

\begin{proposition}\label{prop:ED formula}
$$\nabla^\pi_\chi(Z)=\pi_{\mathbb{P}^*Z}\left(\overline{\iota_{T^*Z}(V_0)}\cap H_\infty\right).$$ 
\end{proposition}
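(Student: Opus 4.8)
The plan is to assemble the statement from three ingredients already in hand: Corollary \ref{cor:ED=Sing}, which identifies $\nabla^\pi_\chi(Z)$ with ${\rm Sing}(M^{\rm hyp}(\nu))$ for generic $\nu$ (equivalently with ${\rm Sing}(M^{\rm hyp})$, the singular locus over $K$, which is defined over $\C$ by Proposition \ref{prop:defined over C}); Theorem \ref{thm:main result}, which says $E^{\rm hyp}=CC(M^{\rm hyp}_0)$; and the general relation between a characteristic cycle and the initial (principal symbol) ideal of a good filtration. Concretely, ${\rm Sing}(M^{\rm hyp})=\varpi({\rm Char}(M^{\rm hyp})\setminus T^*_ZZ)$, so the content of the proposition is to rewrite ${\rm Char}(M^{\rm hyp})\setminus 0_{T^*Z}$, after projectivizing the fibers, as $\overline{\iota_{T^*Z}(V_0)}\cap H_\infty$, and then note that $\pi_{\mathbb{P}^*Z}$ restricted to $H_\infty$ is just the fiberwise-scaling quotient of $\varpi$.

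The key step is the identification $\mathbb{P}{\rm Char}(M^{\rm hyp})=\overline{\iota_{T^*Z}(V_0)}\cap H_\infty$, which the paragraph preceding the proposition already asserts via ``a standard argument on initial ideals (\cite[Theorem 15.17]{eisenbud2013commutative}) and Theorem \ref{thm:main result}.'' I would spell this out as follows. By Theorem \ref{thm:main result}, $CC(M^{\rm hyp})$ (a cycle on $T^*Z_K$) equals $CC(M^{\rm hyp}_0)$, and $M^{\rm hyp}_0=\mathcal{O}_{T^{n+1}\times T^*Z_K}/J_0\big|_{{\rm pushforward}}$; its support, before taking associated graded, is $V(J_0)$, whose image under $\pi_{T^*Z}$ has closure $V_0$. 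Passing to the graded ring with respect to the $\C^*$-grading on $\mathcal{O}_{T^*Z}$ by fiberwise scaling replaces $V_0$ by the cone over its ``part at infinity'': the minimal primes of ${\rm gr}$ correspond (Eisenbud, Thm.\ 15.17, on flat degeneration to the initial ideal) to the components of $\overline{\iota_{T^*Z}(V_0)}\cap H_\infty$ pulled back to the homogeneous coordinate ring. Hence ${\rm Char}(M^{\rm hyp})$, being conic, is the affine cone over $\overline{\iota_{T^*Z}(V_0)}\cap H_\infty$ in the chart $\{\tau=1\}$, and removing the zero section and projectivizing gives exactly $\overline{\iota_{T^*Z}(V_0)}\cap H_\infty\subset H_\infty$.

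Given that identification, the conclusion is immediate: $\nabla^\pi_\chi(Z)={\rm Sing}(M^{\rm hyp})=\varpi\big({\rm Char}(M^{\rm hyp})\setminus 0_{T^*Z}\big)$, and under the quotient $(T^*Z\setminus 0_{T^*Z})//\C^*=H_\infty$ the projection $\varpi$ descends to $\pi_{\mathbb{P}^*Z}|_{H_\infty}$, so $\varpi\big({\rm Char}(M^{\rm hyp})\setminus 0_{T^*Z}\big)=\pi_{\mathbb{P}^*Z}\big(\mathbb{P}{\rm Char}(M^{\rm hyp})\big)=\pi_{\mathbb{P}^*Z}\big(\overline{\iota_{T^*Z}(V_0)}\cap H_\infty\big)$. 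One should also record that everything is happening over $\C$: ${\rm Char}(M^{\rm hyp})$ is defined over $\C$ by Proposition \ref{prop:defined over C}, and $V_0$, being defined by $J_0\subset\mathcal{O}_{T^{n+1}\times T^*Z}$ with $\C$-coefficients after the scaling substitution, is as well, so the equality of closed subvarieties of $Z$ makes sense literally.

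The main obstacle is the bookkeeping in the initial-ideal step, specifically making precise how the non-proper direct image $pr_*$ interacts with taking associated graded and with the closure operations — i.e.\ checking that $\mathbb{P}{\rm Char}$, computed as the projectivized support of ${\rm gr}(M^{\rm hyp}_0)$, really coincides with $\overline{\iota_{T^*Z}(V_0)}\cap H_\infty$ rather than with some a priori larger or smaller set (extra components could in principle appear from the closure in the toric compactification used in the proof of Theorem \ref{thm:3.7}, or components could be lost). The coherence statement from Theorem \ref{thm:3.7} together with the torsion-freeness in Corollary \ref{cor:no torsion} is what rules this out, so the argument should invoke those explicitly; once coherence is in place, Eisenbud's flat-degeneration theorem delivers the set-theoretic identification cleanly.
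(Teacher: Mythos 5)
Your proposal is correct and follows essentially the same route as the paper, whose entire proof is the paragraph preceding Proposition \ref{prop:ED formula}: identify $\mathbb{P}{\rm Char}(M^{\rm hyp})$ with $\overline{\iota_{T^*Z}(V_0)}\cap H_\infty$ via Theorem \ref{thm:main result} together with the standard flat degeneration to initial ideals (\cite[Theorem 15.17]{eisenbud2013commutative}), then conclude from Corollary \ref{cor:ED=Sing} and the definition of the singular locus. The only slip is the claim that ${\rm Char}(M^{\rm hyp})$ \emph{is} the affine cone over $\overline{\iota_{T^*Z}(V_0)}\cap H_\infty$ --- it also contains zero-section components such as $T^*_ZZ$, which lie outside that cone over the complement of the Euler discriminant locus --- but since you immediately discard the zero section before projectivizing, this imprecision does not affect the argument.
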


Let $\mathcal{F}$ be the vanishing locus of a one-form
\begin{equation}\label{eq:likelihood one-form}
    -\omega_0-\nu_0\tau\frac{df}{f}+\sum_{i=1}^n\tau\nu_i\frac{dx_i}{x_i}
    \end{equation}
on $T^n\times\mathbb{P}^*Z$ and let $Y$ be any smooth toric compactification of $T^n$.
Let $p_1={\rm id}_Y\times\pi_{\mathbb{P}^*Z}:Y\times\mathbb{P}^*Z\to Y\times Z$ and $p_Z:Y\times Z\to Z$ be projections.
We set $V_f:=\overline{V_{T^n\times Z}(f)}$, where the closure is taken in $Y\times Z$.
In \cite[\S 7 item 4]{telen2024euler}, the authors conjectured an identity
\begin{equation}\label{eq:TW identity}
 \nabla_\chi^\pi(Z)=p_Z(\overline{p_1(\mathcal{F})}\cap V_f).
\end{equation}
We deduce \eqref{eq:TW identity} from a stronger result \eqref{eq:7.2}.
In particular, Corollary \ref{cor:ED=Sing} and \eqref{eq:TW identity} resolve two questions raised in \cite[\S7, item 4 and item 7]{telen2024euler}: $p_Z\left(\overline{p_1(\mathcal{F})}\cap V_f\right)$ does not depend on the choice of a toric compactification $Y$ and it is the singular locus of a $D$-module corresponding to an Euler integral.
Let $p_2:Y\times\mathbb{P}^*Z\to\mathbb{P}^*Z$ be the natural projection.

\begin{theorem}
The following identity holds true:
    \begin{equation}\label{eq:7.2}
    \mathbb{P}{\rm Char}(M^{\rm hyp})=
    p_2(\overline{\mathcal{F}}\cap p_1^{-1}(V_f)).
\end{equation}
Here, the closure $\overline{\mathcal{F}}$ is taken in the product variety $Y\times\mathbb{P}^*Z$.
\end{theorem}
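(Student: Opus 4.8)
The plan is to combine the identity $\mathbb{P}{\rm Char}(M^{\rm hyp})=\overline{\iota_{T^*Z}(V_0)}\cap H_\infty$ recorded just above (which rests on Theorem~\ref{thm:main result} and the theory of initial ideals) with a direct analysis of the likelihood locus $\mathcal F$ on the toric compactification $Y$. First I would record that, on the chart $\tau=1$, the defining one‑form $-\omega_0-\nu_0\tfrac{df}{f}+\sum_{i}\nu_i\tfrac{dx_i}{x_i}$ of $\mathcal F$ splits, according to its $dx$‑ and $dz$‑parts, into the relative likelihood equations $\nu_i f=\nu_0x_i\partial_{x_i}f$ $(1\le i\le n)$ and the equations $\zeta_j f=-\nu_0\partial_{g_j}f$; after the substitution $y=1/f$ these are precisely the generators of $J_0$, so $\mathcal F\cap(T^n\times T^*Z)\cong V(J_0)$, and under this identification $p_2$ restricts to $\iota_{T^*Z}\circ\pi_{T^*Z}$. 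Since $Y$ is complete, $p_1$ and $p_2$ are proper, so $p_2(\overline{\mathcal F})=\overline{p_2(\mathcal F\cap(T^n\times T^*Z))}=\overline{\iota_{T^*Z}(\pi_{T^*Z}(V(J_0)))}=\overline{\iota_{T^*Z}(V_0)}$, and forming $p_1^{-1}(V_f)$ is compatible with images of closures. It therefore remains to establish the single identity
\[
p_2\big(\overline{\mathcal F}\cap p_1^{-1}(V_f)\big)\ =\ p_2(\overline{\mathcal F})\cap H_\infty .
\]

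The engine for this will be a residue computation in the spirit of the proof of Theorem~\ref{thm:3.7}. Given an arc $\gamma\colon\Delta\to Y\times\mathbb P^*Z$ with $\gamma(\Delta^\circ)\subset\mathcal F\cap(T^n\times T^*Z)$, the pull‑back $\gamma^*\big(-\omega_0-\nu_0\tfrac{df}{f}+\sum_i\nu_i\tfrac{dx_i}{x_i}\big)$ vanishes identically, hence has vanishing residue at $t=0$. Passing to a maximal cone of the fan of $Y$ with primitive ray generators $\phi_1,\dots,\phi_n$ and coordinates $\xi_1,\dots,\xi_n$ and writing $f=\xi^{m}\tilde f$ with $\tilde f$ the transform of $f$ (so that $V_f=\{\tilde f=0\}$ in this chart and $d_zf/f=d_z\tilde f/\tilde f$), one computes that the $dz$‑components cancel and the defining form of $\mathcal F$ becomes, in the chart, the vanishing of
\[
-\nu_0\,\frac{d_x\tilde f}{\tilde f}\ +\ \sum_{j=1}^{n}\big(\langle\phi_j,\nu'\rangle-\nu_0 m_j\big)\,\frac{d\xi_j}{\xi_j},\qquad \nu'=(\nu_1,\dots,\nu_n).
\]
If $\gamma(0)$ lies over the stratum $\{\xi_j=0:j\in S\}$ with $\rho_j:=\operatorname{ord}_t\xi_j>0$ for $j\in S$, the residue identity reads $\nu_0\operatorname{res}_{t=0}\gamma^*\tfrac{d_x\tilde f}{\tilde f}=\sum_{j\in S}\big(\langle\phi_j,\nu'\rangle-\nu_0 m_j\big)\rho_j$. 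Since $\nu_0,\dots,\nu_n$ are algebraically independent over $\mathbb Q$, the right side is non‑zero whenever $S\ne\varnothing$ (the $\phi_j$, $j\in S$, being linearly independent), while $\operatorname{res}_{t=0}\gamma^*\tfrac{d_x\tilde f}{\tilde f}$ can be non‑zero only if $\tilde f(\gamma(0))=0$; for $S=\varnothing$ the same identity forces $\operatorname{ord}_tf(x(t);z(t))=0$ as soon as $\zeta$ stays bounded. Combining this with $\zeta_j f=-\nu_0\partial_{g_j}f$ should give: (i) every point of $\overline{\mathcal F}$ lying over $V_f$ has $[\tau:\zeta]\in H_\infty$ (such a point cannot lie in $Y\times T^*Z$), yielding "$\subseteq$"; and (ii) for $q\in p_2(\overline{\mathcal F})\cap H_\infty$, lifting a chosen arc through $\iota_{T^*Z}(\pi_{T^*Z}(V(J_0)))$ converging to $q$ to an arc in $\mathcal F\cap(T^n\times T^*Z)$ (possible after a base change, since $\mathcal F\cap(T^n\times T^*Z)\to\pi_{T^*Z}(V(J_0))$ is generically finite) and using completeness of $Y$, one checks that its limit lies over $V_f$: the blow‑up of $\zeta$ forces $f\to0$ in the chart, and the displayed form then forces the vanishing of the relevant initial form of $f$. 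Applying $\pi_{\mathbb P^*Z}$ to \eqref{eq:7.2} recovers \eqref{eq:TW identity} and the independence of $p_Z(\overline{p_1(\mathcal F)}\cap V_f)$ from $Y$.

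The main obstacle I anticipate is precisely this boundary bookkeeping: making the equivalence ``$\zeta$ blows up along $\gamma$'' $\Longleftrightarrow$ ``$\gamma(0)$ lies over $V_f$'' precise uniformly over all cones of the fan of $Y$ and all codimensions of boundary strata, tracking $\operatorname{ord}_t f$, $\operatorname{ord}_t\partial_{x_i}f$ and $\operatorname{ord}_t\partial_{g_j}f$ simultaneously via the likelihood equations, and ruling out — for generic $\nu$, i.e.\ over $K$ — the potentially problematic limit points where a critical point escapes into $\overline{V_Z(f)}\cap(\text{toric boundary})$ while $\zeta$ remains finite. Everything else (the splitting of the form defining $\mathcal F$, its identification with $V(J_0)$, properness of $p_1,p_2$, and the formal manipulations with images of closures) is routine given Theorem~\ref{thm:main result}, Corollary~\ref{cor:ED=Sing}, and the discussion preceding the statement.
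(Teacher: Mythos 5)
Your route is the paper's: identify $\mathcal{F}\cap(T^n\times T^*Z)$ with $V(J_0)$, use closedness of $p_1,p_2$ to reduce \eqref{eq:7.2} to an exchange statement between $p_1^{-1}(V_f)$ and $H_\infty$ on $\overline{\mathcal{F}}$, prove the hard inclusion by an arc--residue computation in toric charts as in Theorem \ref{thm:3.7}, and get the other inclusion from the relation $\zeta_j f=-\nu_0\tau\,\partial_{g_j}f$. (For the latter the paper simply observes that $[\tau:\zeta]$ is proportional to $(f,-\nu_0\partial_g f)$ on $\overline{\mathcal{F}}$, which makes your arc-lifting through $V(J_0)\to V_0$ in step (ii) unnecessary, though your version also works once one notes that $V_f$ meets a chart $U_\sigma\times Z$ exactly in $V(\tilde f)$ because $\tilde f$ has no monomial factor.)

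The execution of your step (i), however, has a genuine flaw. First, the $dz$-components of \eqref{eq:likelihood one-form} do not ``cancel'': they are defining equations of $\mathcal{F}$; what is true is only that, under the contradiction hypothesis that $\gamma(0)$ stays in $Y\times T^*Z$ (so $\zeta\circ\gamma$ and $g_j\circ\gamma$ are regular at $t=0$), the pullback $\gamma^*\omega_0$ is holomorphic and contributes no residue. Second, you must keep the full differential $d\tilde f$, not $d_x\tilde f$: the point of the argument is that ${\rm res}_{t=0}\,\gamma^*\frac{d\tilde f}{\tilde f}={\rm ord}_t(\tilde f\circ\gamma)$ is a non-negative \emph{integer}, whereas ${\rm res}_{t=0}\,\gamma^*\frac{d_x\tilde f}{\tilde f}$ is merely an element of the base field, so the independence-over-$\mathbb{Q}$ argument cannot be applied to it. Third, and most importantly, your inference in the case $S\neq\varnothing$ does not reach the required conclusion: you note that the right-hand side is nonzero and that the left-hand side ``can be nonzero only if $\tilde f(\gamma(0))=0$'' --- but $\tilde f(\gamma(0))=0$ is precisely the hypothesis (the point lies over $V_f$), so no contradiction follows from this. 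The correct deduction, as in the last part of the proof of Theorem \ref{thm:3.7}, is: assuming $\tau(\gamma(0))\neq 0$ and working in the chart $\tau=1$, the vanishing of the total residue reads $\sum_{j\in S}\rho_j\langle\phi_j,\nu'\rangle=\nu_0\bigl({\rm ord}_t(\tilde f\circ\gamma)+\sum_{j\in S}\rho_j m_j\bigr)$; since all orders are integers and $\nu_0,\dots,\nu_n$ are algebraically independent over $\mathbb{Q}$, this forces $\sum_{j\in S}\rho_j\phi_j=0$, impossible for $S\neq\varnothing$ because the $\phi_j$ are linearly independent and $\rho_j>0$, while for $S=\varnothing$ it forces ${\rm ord}_t(\tilde f\circ\gamma)=0$, contradicting $\gamma(0)\in p_1^{-1}(V_f)$. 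Hence $\tau\circ\gamma$ vanishes at $t=0$ and $\gamma(0)\in p_2^{-1}(H_\infty)$. With this repair your plan coincides with the paper's proof.
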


\begin{proof}
    
Let us confirm that \eqref{eq:7.2} implies \eqref{eq:TW identity}.
We first note that $\overline{p_1(\mathcal{F})}=p_1(\overline{\mathcal{F}})$ since $p_1$ is a closed map.
This also implies that
$\overline{p_1(\mathcal{F})}\cap V_f=p_1(\overline{\mathcal{F}}\cap p_1^{-1}(V_f))$.
Thus, it follows that $\nabla_\chi^\pi(Z)=\pi_{\mathbb{P}^*Z}(p_2(\overline{\mathcal{F}}\cap p_1^{-1}(V_f)))=p_Z(\overline{p_1(\mathcal{F})}\cap V_f)$, which proves \eqref{eq:TW identity}.

On the other hand, \eqref{eq:7.2} follows from the following identity.
\begin{equation}\label{eq:old lemma}
\overline{\mathcal{F}}\cap p_1^{-1}(V_f)=\overline{\mathcal{F}}\cap p_2^{-1}(H_\infty). 
\end{equation}
In fact, \eqref{eq:old lemma} and the closedness of the morphism $p_2:Y\times\mathbb{P}^*Z\to\mathbb{P}^*Z$ imply a sequence of identities $\mathbb{P}{\rm Char}(M^{\rm hyp})=\overline{p_2(\mathcal{F})}\cap H_\infty =p_2(\overline{\mathcal{F}}\cap p_2^{-1}(H_\infty))=
    p_2(\overline{\mathcal{F}}\cap p_1^{-1}(V_f))$.

Let us prove \eqref{eq:old lemma}.
    The inclusion $\overline{\mathcal{F}}\cap p_1^{-1}(V_f)\supset\overline{\mathcal{F}}\cap H_\infty$ is because $\tau=f(x;z)$ on $\overline{\mathcal{F}}$ and $H_\infty=\{\tau=0\}$.
    We show the other inclusion.
    We pick a toric coordinate $\xi=(\xi_1,\dots,\xi_n)$ associated to a cone spanned by $\phi_1,\dots,\phi_n$.
    We set 
    $\rho_i=\underset{v\in {\rm New}(f)}{\rm min}\langle\phi_i,v\rangle$ for $i=1,\dots,n$.
    Let $\gamma:\Delta\to \overline{\mathcal{F}}$ be a morphism such that $\gamma(0)\in p_1^{-1}(V_f)\cap\cap_{i=1}^k\{\xi_i=0\}\setminus\cup_{j=k+1}^n\{\xi_j=0\}$ for some $k$.
    In this coordinate, $f$ is written as 
    $
    f=\prod_{i=1}^k\xi_i^{\rho_i}\tilde{f}(\xi)=\prod_{i=1}^k\xi_i^{\rho_i}\left(g_0(\xi)+\sum_{i=1}^k\xi_ig_i(\xi)\right),
    $
    where $\tilde{f},g_1,\dots,g_k$ are polynomials in $\xi$ with $g_0|_{\xi_1=\cdots=\xi_k=0}\not\equiv 0$ and $\rho_i$ are integers.
    The one-form \eqref{eq:likelihood one-form} takes the following form:
    $$
    -\omega_0-\nu_0\tau\frac{d\tilde{f}}{\tilde{f}}+\sum_{i=1}^k\tau(\langle\phi_i,\nu\rangle-\nu_0\rho_i)\frac{d\xi_i}{\xi_i}
    +\sum_{i=k+1}^n\tau\langle\phi_i,\nu\rangle\frac{d\xi_i}{\xi_i}.
    $$
    As in the last part of Theorem \ref{thm:3.7}, it follows that ${\rm ord}(\tau\circ\gamma)>0$, which implies $\gamma(0)\in H_\infty$.

\end{proof}

\subsection{Gelfand-Kapranov-Zelevinsky case}\label{subsec:GKZ case}

Let us first recall the definition of GKZ system.
In this section, we consider a special family of polynomials $f(x;z)=\sum_{a\in A}z_ax^a$ where $A\subset \Z^n$ is a finite subset and $z=(z_a)_{a\in A}\in\C^A=Z$.
We recycle the notation used in Example \ref{ex:GKZ}.
Let $A^h$ be the homogenization of $A$, i.e., $A^h$ is a collection of lattice vectors $(1,a)\in\Z^{n+1}$ with $a\in A$.
Throughout this section, we assume that $A^h$ spans $\mathbb{Q}^{n+1}$ over $\mathbb{Q}$.
Let $R_A\subset D_{Z_K}$ be the subring of $D_{Z_K}$ generated by $\partial_a$ for $a\in A$ over $K$.
Let $I_A$ be the toric ideal of the configuration $A^h$.
Any element of $I_A$ is a binomial $\prod_{a\in A}\partial_a^{u_a}-\prod_{a\in A}\partial_a^{v_a}$ of $\partial_a$.
Here, $u_a,v_a\in\Z_{\geq 0}$ satisfy a relation $\sum_{a\in A}u_a=\sum_{a\in A}v_a$ and $\sum_{a\in A}u_aa=\sum_{a\in A}v_aa$.
We set
$E_0:=\sum_{a\in A}z_a\partial_a$ and $E_i:=\sum_{a\in A}a_iz_a\partial_a$ for $i=1,\dots,n$, where $a_i$ is the $i$-th coordinate of $a\in A$.
The GKZ ideal $H_A(\nu)\subset D_{Z_K}$ is a left ideal generated by $E_i+\nu_i$ for $i=0,\dots,n$ and elements of $I_A$.
The GKZ system $M_A(\nu)$ is the left $D_{Z_K}$-module $D_{Z_K}/H_A(\nu)$.
In the same manner, we define an $\hbar$-version of GKZ system.
Let $I_A^{\,\hbar}$ be the left ideal generated by binomials of the form $\prod_{a\in A}(\partial_a^\hbar)^{u_a}-\prod_{a\in A}(\partial_a^\hbar)^{v_a}$, where $u_a,v_a\in\Z_{\geq 0}$ satisfy a relation $\sum_{a\in A}u_a=\sum_{a\in A}v_a$ and $\sum_{a\in A}u_aa=\sum_{a\in A}v_aa$.
We define $E_i^\hbar\in D_{[\![\hbar]\!]}$ for $i=0,\dots,n$ by
$E_0^\hbar:=\sum_{a\in A}z_a\partial_a^\hbar$ and  $E_i^\hbar:=\sum_{a\in A}a_iz_a\partial_a^\hbar$ for $i=1,\dots,n$.
An $\hbar$-version of GKZ ideal $H_A^\hbar(\nu)\subset D_{[\![\hbar]\!]}$ is a left ideal generated by $E_i^\hbar+\nu_i$ for $i=0,\dots,n$ and elements of $I_A^{\,\hbar}$.
An $\hbar$-version of GKZ system $M_A^\hbar(\nu)$ is the left $D_{[\![\hbar]\!]}$-module $D_{[\![\hbar]\!]}/H_A^\hbar(\nu)$.
The purpose of this subsection is to prove the following equality.

\begin{theorem}\label{thm:hbar GKZ}
    There is an isomorphism of $D_{[\![\hbar]\!]}$-modules $M_A^\hbar(\nu)\to M^{\rm hyp}_{[\![\hbar]\!]}$ which sends the class $[1]$ in $M_A^\hbar(\nu)$ to that in $M^{\rm hyp}_{[\![\hbar]\!]}$.
    In particular, the class $[1]$ in $M^{\rm hyp}_{[\![\hbar]\!]}$ is a cyclic generator of $M^{\rm hyp}_{[\![\hbar]\!]}$ as a $D_{[\![\hbar]\!]}$-module.
\end{theorem}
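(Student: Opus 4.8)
The plan is to construct the isomorphism $M_A^\hbar(\nu)\to M^{\rm hyp}_{[\![\hbar]\!]}$ in two stages, mirroring the classical case (Example \ref{ex:GKZ}) but tracking the parameter $\hbar$. First I would introduce an auxiliary rank-one $D_{[\![\hbar]\!]}$-module $M_0^\hbar$ over $K[\![\hbar]\!]$, generated by a symbol $e$, with $\s_{s}\cdot e := -s\cdot e$, $\s_{\nu_i}\cdot e := e$, and $\partial_z^\hbar\cdot e := 0$; this plays the role of an $\hbar$-deformed Gamma factor. Forming the tensor product $M^{\rm hyp}_{[\![\hbar]\!]}\otimes_{K[\![\hbar]\!]}M_0^\hbar$ with the $\DD_{[\![\hbar]\!]}$-action $\s\circ(m\otimes ae)=\s m\otimes \s(ae)$, $\partial_z^\hbar\circ(m\otimes ae)=(\partial_z^\hbar m)\otimes ae$, the claim becomes that there is a $\DD_{[\![\hbar]\!]}$-isomorphism $M^{\rm hyp}_{[\![\hbar]\!]}\otimes_{K[\![\hbar]\!]}M_0^\hbar\simeq \DD_{[\![\hbar]\!]}/J_{\rm GG}^\hbar$ sending $[1]\otimes e$ to $[1]$, where $J_{\rm GG}^\hbar$ is the $\hbar$-deformed GG ideal generated by $E_0^\hbar+\nu_0$, $E_i^\hbar+\nu_i$, and $\partial_{z_a}^\hbar-\s_{s}\s_\nu^{a}$. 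This is a direct computation: one checks that the images of the generators \eqref{eq:generators of J_h} of $J_{[\![\hbar]\!]}$ and the images of the generators of $J_{\rm GG}^\hbar$ annihilate the respective cyclic vectors, and that each set of relations can be derived from the other after tensoring with $M_0^\hbar$. Concretely, $\partial_{z_a}^\hbar+s\s_s x^a$ (the $z$-relation, here $\partial_z f=x^a$ since $f=\sum z_ax^a$) together with the action $\partial_{z_a}^\hbar(1\otimes e)=0$ and $\s_s(1\otimes e)=-s(1\otimes e)$ reproduces $\partial_{z_a}^\hbar-\s_s\s_\nu^a$ on the tensor product; the Euler operators $E_i^\hbar+\nu_i$ come from applying $\sum_a a_iz_a\partial_{z_a}^\hbar$ and using the first relation $1=\s_sf(\s_\nu;z)$; and the toric relations in $I_A^\hbar$ follow from $\partial_{z_a}^\hbar\equiv -s\s_s\s_\nu^a$ modulo $J_{[\![\hbar]\!]}$ together with the commutation rules \eqref{eq:commutators1}, which are designed precisely so that monomials in the $\partial_{z_a}^\hbar$ map to monomials in $\s_s\s_\nu^a$ consistently.

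The second stage is to descend from the $\DD_{[\![\hbar]\!]}$-level to the $D_{[\![\hbar]\!]}$-level, i.e. to show that tensoring by $M_0^\hbar$ does not change the underlying $D_{[\![\hbar]\!]}$-module and that $\DD_{[\![\hbar]\!]}/J_{\rm GG}^\hbar$ restricts, as a $D_{[\![\hbar]\!]}$-module, to $D_{[\![\hbar]\!]}/H_A^\hbar(\nu)$. For the former, $M_0^\hbar$ is rank one over $K[\![\hbar]\!]$ with trivial $\partial_z^\hbar$-action, so $M^{\rm hyp}_{[\![\hbar]\!]}\otimes_{K[\![\hbar]\!]}M_0^\hbar$ is isomorphic to $M^{\rm hyp}_{[\![\hbar]\!]}$ as a $D_{[\![\hbar]\!]}$-module, the isomorphism being $m\mapsto m\otimes e$ (this uses that the $D_{[\![\hbar]\!]}$-action ignores the $M_0^\hbar$ factor). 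For the latter, one observes that inside $\DD_{[\![\hbar]\!]}/J_{\rm GG}^\hbar$ every difference operator $\s_{\nu_i}$, $\s_s$ can be eliminated: using $\partial_{z_a}^\hbar\equiv -s\s_s\s_\nu^a$ one expresses $\s_s\s_\nu^a$ in terms of $\partial_{z_a}^\hbar$, and since $A^h$ spans $\mathbb{Q}^{n+1}$ the monomials $\{\s_s\s_\nu^a\}_{a\in A}$ generate all of $\s_s,\s_{\nu_1},\dots,\s_{\nu_n}$ multiplicatively up to rational powers — more carefully, one argues that the cyclic vector $[1]$ is already generated over $D_{[\![\hbar]\!]}$ and that the relations among the $\partial_{z_a}^\hbar$ acting on it are exactly $E_i^\hbar+\nu_i$ and $I_A^\hbar$. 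This is the $\hbar$-analogue of the classical statement that the GG system and the GKZ system have the same cyclic $D$-module; the spanning hypothesis on $A^h$ is what guarantees the elimination goes through.

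The main obstacle I anticipate is the elimination step in the second stage: showing that passing from the difference-differential presentation to the purely differential one loses nothing, i.e. that $[1]$ genuinely generates $M^{\rm hyp}_{[\![\hbar]\!]}$ over $D_{[\![\hbar]\!]}$ and that no extra relations are introduced or lost. In the classical ($\hbar=1$) setting this is standard, but here one must verify that the commutation relations \eqref{eq:commutators1}, which carry explicit factors of $\hbar$, do not obstruct the inductive rewriting of an arbitrary element of $\DD_{[\![\hbar]\!]}[1]$ into $D_{[\![\hbar]\!]}[1]$; in particular one should check that no division by $\hbar$ is secretly required, which is where Corollary \ref{cor:no torsion} ($\DD_{[\![\hbar]\!]}/J_{[\![\hbar]\!]}$ has no $\hbar$-torsion) and the $\hbar$-adic Hausdorff property from Theorem \ref{thm:hbar isomorphism} are essential. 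Once cyclicity over $D_{[\![\hbar]\!]}$ is established, matching the annihilator of $[1]$ with $H_A^\hbar(\nu)$ is a bookkeeping exercise using the three families of generators in \eqref{eq:generators of J_h} exactly as above. A clean way to organize the whole argument is to set $\hbar$ formally invertible, apply the scaling isomorphism \eqref{eq:DD scaling} together with the classical Example \ref{ex:GKZ} to get the isomorphism over $\DD_{((\hbar))}$, and then use torsion-freeness plus the fact that both $H_A^\hbar(\nu)$ and $J_{[\![\hbar]\!]}$ are defined integrally over $K[\![\hbar]\!]$ with the expected special fibre at $\hbar=0$ to conclude the isomorphism already holds over $K[\![\hbar]\!]$; this route sidesteps most of the explicit $\hbar$-bookkeeping.
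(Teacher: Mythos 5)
Your Stage 1 (the $\hbar$-deformed Gamma twist $M_0^\hbar$ and the matching of $J_{[\![\hbar]\!]}$ with an $\hbar$-GG ideal) is a reasonable parallel to Example \ref{ex:GKZ}, but the whole mathematical content of the theorem sits in your Stage 2, and there you assert rather than prove it. The claim that ``the spanning hypothesis on $A^h$ is what guarantees the elimination goes through'' is not correct: spanning of $A^h$ over $\mathbb{Q}$ only lets you realize shifts $\s_{\nu_0}\s_\nu^{a}$ with $a$ in the semigroup generated by $A$ via $\partial_{z_a}^\hbar\equiv-\nu_0\s_{\nu_0}\s_\nu^a$; to rewrite an arbitrary element of $\DD_{[\![\hbar]\!]}\cdot[1]$ inside $D_{[\![\hbar]\!]}\cdot[1]$ you need the \emph{negative} shifts as well, i.e.\ you need the operators $\partial_a^\hbar$ to act invertibly on the module. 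That invertibility is a non-resonance statement about $\nu$, not a combinatorial consequence of $A^h$, and it is exactly what Lemma \ref{lem:non-resonance} (the $\hbar$-analogue of Schulze--Walther) supplies in the paper. Likewise, ``matching the annihilator of $[1]$ with $H_A^\hbar(\nu)$ is a bookkeeping exercise'' is false even at $\hbar=1$: that the Euler operators and the toric ideal exhaust the annihilator of the Euler-integral generator is a theorem of GKZ/Adolphson/Schulze--Walther valid only for non-resonant parameters, and it fails for resonant ones, so no purely formal rewriting using \eqref{eq:commutators1} can establish it. The paper's proof replaces your elimination step by the $\hbar$-Euler--Koszul machinery (Lemmas \ref{lem:hbar GKZ isom} and \ref{lem:non-resonance}), a regular-sequence computation showing $\mathcal{H}_1(E^0+\nu,S_A[\partial_a^{-1};a\in A])=0$, hence $\hbar$-torsion-freeness of $M_A^\hbar(\nu)$ via \eqref{eq:no torsion for GKZ}, and then a characteristic-cycle comparison $CC(M_A^\hbar(\nu))=CC(M_A(\nu))=E^{\rm hyp}=CC(M^{\rm hyp}_{[\![\hbar]\!]})$ (by the same argument as Theorem \ref{thm:main result}) together with additivity of $CC$ to kill the kernel of the natural map $[1]\mapsto[1]$. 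None of these ingredients, nor a substitute for them, appears in your proposal.

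Your fallback route has the same gap in disguise. Inverting $\hbar$ and applying the scaling \eqref{eq:DD scaling} plus Example \ref{ex:GKZ} only gives a $\DD_{((\hbar))}$-isomorphism with the \emph{GG} system; passing from that to a $D_{((\hbar))}$-isomorphism with the GKZ module $M_A(\nu)$ is again the classical non-resonant theorem, which you would at least have to cite and check compatibly with the generator $[1]$. Moreover the descent from $K((\hbar))$ to $K[\![\hbar]\!]$ is not formal: injectivity of the integral map needs $\hbar$-torsion-freeness of $M_A^\hbar(\nu)$ (the paper proves this; Corollary \ref{cor:no torsion} concerns the target, not the source), and surjectivity needs surjectivity modulo $\hbar$ plus a completeness/Nakayama argument as in Theorem \ref{thm:hbar isomorphism}; that mod-$\hbar$ surjectivity is itself nontrivial, as Section \ref{sec:Landau analysis} shows by exhibiting families where the cyclic $\mathcal{O}_{T^*Z}$-submodule of $M_0^{\rm hyp}$ generated by the class of $1$ is strictly smaller. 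So as written the proposal does not yield a proof; it needs either the Euler--Koszul/characteristic-cycle argument of the paper or an explicit $\hbar$-deformed version of the classical non-resonance theorem to close Stage 2.
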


In the following, we state two lemmas necessary to prove Theorem \ref{thm:hbar GKZ} from \cite{adolphson1994hypergeometric} and \cite{schulze2009hypergeometric}.
We do not give proofs to the lemmas below as they are $\hbar$-analogues of existing propositions.
We specify the corresponding propositions below.

\begin{remark}
    It is also possible to introduce and study the usual static version of GKZ system.
    Namely, one can take $\nu_0,\dots,\nu_n$ to be any complex numbers.
    For example, a parameter $\nu$ is said to be s-resonant if $\nu\in (1,a)+{\rm qdeg}(S_A/\langle\zeta_a\rangle)$ for some $a\in A$, where ${\rm qdeg}(S_A/\langle\zeta_a\rangle)$ denotes the set of quasi-degrees \cite[Definition 5.2]{Matusevich-Miller-Walther}.
    Then, the static version of Lemma \ref{lem:non-resonance} is the following: $\nu$ is s-resonant if and only if $\partial_a^\hbar:\mathcal{K}_\bullet(E^\hbar+\nu,S_A)\to \mathcal{K}_\bullet(E^\hbar+\nu,S_A)$ is not a quasi-isomorphism for some $a\in A$.
    This is not pursued to keep the exposition consistent with other sections.
\end{remark}

The ring $R_A$ is naturally $\Z^{n+1}$-graded by setting ${\rm deg} (\partial_a)=-(1,a)$, which induces a $\Z^{n+1}$-grading of $S_A:=R_A/I_A$.
We also introduce a $\Z^{n+1}$-grading to the Weyl algebra $D_{[\![\hbar]\!]}$ by setting ${\rm deg} (\partial_a^\hbar):=-(1,a)$, ${\rm deg}(z_a):=(1,a)$ and ${\rm deg}(\hbar):=(0,0)$.
For a homogeneous element $m$ of a $\Z^{n+1}$-graded $D_{[\![\hbar]\!]}$-module $M$, we set
\begin{equation}\label{eq:EK action}
    (E_i^\hbar+\nu_i)\circ m:=(E_i^\hbar+\nu_i+\deg(m)\hbar)m.
\end{equation}
We identify $R_A$ as a subring of $D_{[\![\hbar]\!]}$ generated by $\partial^\hbar_a$ by the correspondence $\partial_a\mapsto \partial^\hbar_a$.
For a $\Z^{n+1}$-graded $R_A$-module $N$, the tensor product $D_{[\![\hbar]\!]}\otimes_{R_A}N$ is naturally equipped with a structure of a $\Z^{n+1}$-graded $D_{[\![\hbar]\!]}$-module.
The {\it Euler-Koszul complex} $\mathcal{K}_\bullet(E^\hbar+\nu,N)$ 
is the Koszul complex of left $D_{[\![\hbar]\!]}$-modules defined by the sequence $E^\hbar +\nu$ of commuting endomorphisms on the left $D_{[\![\hbar]\!]}$-module $D_{[\![\hbar]\!]}\otimes_{R_A}N$ concentrated in homological degrees $n+1$ to $0$.
The $i$-th Euler–Koszul homology of $N$ is $
\mathcal{H}_i(E^\hbar +\nu;N) := H_i(\mathcal{K}_\bullet(E^\hbar +\nu;N))$.
The following lemmas are $\hbar$-versions of \cite[Theorem 4.4]{adolphson1994hypergeometric} and \cite[Theorem 3.6]{schulze2009hypergeometric}.

\begin{lemma}\label{lem:hbar GKZ isom}
    There is an isomorphism $M_A^\hbar(\nu)\simeq \mathcal{H}_0^\hbar(E^\hbar+\nu,S_A)$ of left $D_{[\![\hbar]\!]}$-modules.
    It sends the equivalence class of unity of $M_A^\hbar(\nu)$ to that of $\mathcal{H}_0^\hbar(E^\hbar+\nu,S_A)$.
\end{lemma}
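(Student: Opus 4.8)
The plan is to unwind the definition of $\mathcal H_0^\hbar(E^\hbar+\nu,S_A)$ and recognize it as the standard presentation of $M_A^\hbar(\nu)$, following the Euler--Koszul computation behind \cite[Theorem~4.4]{adolphson1994hypergeometric} (see also \cite{Matusevich-Miller-Walther}) step by step. The only thing to monitor throughout is that the formal parameter $\hbar$ is central and that the $\Z^{n+1}$-grading of \S\ref{subsec:GKZ case} has been arranged with $\deg(\hbar)=(0,0)$, so that the purely homological arguments carry over verbatim with $\C$ replaced by $K[\![\hbar]\!]$.

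First I would record the graded identification $D_{[\![\hbar]\!]}\otimes_{R_A}S_A\simeq D_{[\![\hbar]\!]}/D_{[\![\hbar]\!]}I_A^{\,\hbar}$ of left $D_{[\![\hbar]\!]}$-modules carrying $1\otimes\overline 1$ to $\overline 1$; this needs no flatness, since applying the right-exact functor $D_{[\![\hbar]\!]}\otimes_{R_A}(-)$ to $0\to I_A\to R_A\to S_A\to 0$ exhibits the left-hand side as the cokernel of $D_{[\![\hbar]\!]}\otimes_{R_A}I_A\to D_{[\![\hbar]\!]}$, whose image is the left ideal $D_{[\![\hbar]\!]}I_A^{\,\hbar}$ because $R_A\hookrightarrow D_{[\![\hbar]\!]}$, $\partial_a\mapsto\partial_a^\hbar$, is a graded ring inclusion. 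By definition $\mathcal H_0^\hbar(E^\hbar+\nu,S_A)$ is then the cokernel of the last Euler--Koszul differential
\[
\bigoplus_{i=0}^{n}\bigl(D_{[\![\hbar]\!]}\otimes_{R_A}S_A\bigr)\longrightarrow D_{[\![\hbar]\!]}\otimes_{R_A}S_A,\qquad (m_i)_i\longmapsto\sum_{i=0}^{n}(E_i^\hbar+\nu_i)\circ m_i,
\]
and the heart of the matter is the observation that the twist in \eqref{eq:EK action} is chosen precisely so that, on the class $\overline P$ of a $\Z^{n+1}$-homogeneous $P\in D_{[\![\hbar]\!]}$, the operation $(E_i^\hbar+\nu_i)\circ(-)$ agrees with right multiplication by $E_i^\hbar+\nu_i$. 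Concretely, $[\partial_z^\hbar,g]=\hbar\partial_z g$ forces $[E_i^\hbar,P]=\hbar\,\deg_i(P)\,P$ for homogeneous $P$ (one checks this on the generators $z_a$ and $\partial_a^\hbar$), and the $\hbar\,\deg_i$-term in \eqref{eq:EK action} cancels this commutator, so that $(E_i^\hbar+\nu_i)\circ\overline P=\overline{P(E_i^\hbar+\nu_i)}$ in $D_{[\![\hbar]\!]}/D_{[\![\hbar]\!]}I_A^{\,\hbar}$. Since every element of $D_{[\![\hbar]\!]}$ is a finite $K[\![\hbar]\!]$-combination of homogeneous monomials $z^u(\partial^\hbar)^v$, the image of the displayed differential is exactly the image of $\sum_{i=0}^{n}D_{[\![\hbar]\!]}(E_i^\hbar+\nu_i)$, whence $\mathcal H_0^\hbar(E^\hbar+\nu,S_A)\simeq D_{[\![\hbar]\!]}/\bigl(D_{[\![\hbar]\!]}I_A^{\,\hbar}+\sum_{i}D_{[\![\hbar]\!]}(E_i^\hbar+\nu_i)\bigr)=D_{[\![\hbar]\!]}/H_A^\hbar(\nu)=M_A^\hbar(\nu)$, and the class of $1$ on one side goes to the class of $1$ on the other.

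I expect the only mildly delicate points to be the graded bookkeeping over $K[\![\hbar]\!]$ rather than $\C$ — confirming that $D_{[\![\hbar]\!]}$ and $S_A$ are $\Z^{n+1}$-graded with $\hbar$ in degree $0$, that $R_A\hookrightarrow D_{[\![\hbar]\!]}$ and all Euler--Koszul differentials are graded, and that the twisted action in \eqref{eq:EK action} is well defined and $K[\![\hbar]\!]$-linear — together with pinning down the exact sign in the cancellation above, which is dictated entirely by those grading conventions. No $\hbar$-adic completeness issue enters, since the computation only ever manipulates finite $K[\![\hbar]\!]$-combinations of monomials; once these formalities are in place the argument is literally the $\hbar$-specialization of \cite[Theorem~4.4]{adolphson1994hypergeometric}.
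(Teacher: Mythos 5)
Your argument is correct, and it is worth noting that the paper itself offers no proof of this lemma: it simply declares it (together with Lemma \ref{lem:non-resonance}) to be the $\hbar$-analogue of \cite[Theorem 4.4]{adolphson1994hypergeometric} and the corresponding statements in \cite{Matusevich-Miller-Walther,schulze2009hypergeometric}. What you have written out is exactly the standard Euler--Koszul computation that those references rely on, transported to $D_{[\![\hbar]\!]}$: the identification $D_{[\![\hbar]\!]}\otimes_{R_A}S_A\simeq D_{[\![\hbar]\!]}/D_{[\![\hbar]\!]}I_A^{\,\hbar}$ by right-exactness, the observation that the twisted Euler operators act on this cyclic module by right multiplication by $E_i^\hbar+\nu_i$, and the resulting presentation of the cokernel of the last Koszul differential as $D_{[\![\hbar]\!]}/H_A^\hbar(\nu)$ with $[1]\mapsto[1]$. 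Since $\hbar$ is central of degree $(0,0)$ and elements of $D_{[\![\hbar]\!]}$ are finite $K[\![\hbar]\!]$-combinations of the homogeneous monomials $z^u(\partial^\hbar)^v$, nothing in the classical argument is affected by the deformation, as you say; note also that for $\mathcal{H}_0$ alone you never need commutativity of the twisted endomorphisms, only the image of the last differential.

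The one point to state more carefully is the sign you flagged. With the grading conventions of \S\ref{subsec:GKZ case} ($\deg(z_a)=(1,a)$, $\deg(\partial_a^\hbar)=-(1,a)$) one has $[E_i^\hbar,P]=\hbar\deg_i(P)P$ for homogeneous $P$, so the twist that cancels the commutator is $(E_i^\hbar+\nu_i)\circ m=(E_i^\hbar+\nu_i-\hbar\deg_i(m))m$, not the formula with the plus sign as printed in \eqref{eq:EK action}; with the plus sign the commutator is doubled rather than cancelled, and the operation is not even $D_{[\![\hbar]\!]}$-linear, so the Euler--Koszul complex would not be a complex of left $D_{[\![\hbar]\!]}$-modules. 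This is evidently a sign slip in the paper (the Matusevich--Miller--Walther convention has the minus sign), and the $D_{[\![\hbar]\!]}$-linearity requirement pins the convention down uniquely, so your proof goes through once you adopt it; just make the corrected sign explicit rather than asserting that the printed formula cancels the commutator.
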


\begin{lemma}\label{lem:non-resonance}
    The morphism $\partial_a^\hbar:\mathcal{K}_\bullet(E^\hbar+\nu,S_A)\to \mathcal{K}_\bullet(E^\hbar+\nu,S_A)$ is a quasi-isomorphism for any $a\in A$.
\end{lemma}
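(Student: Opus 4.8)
The plan is to imitate the proof of the non-deformed statement \cite[Theorem 3.6]{schulze2009hypergeometric}, carrying the formal parameter $\hbar$ through the argument; the only genuinely indispensable input is the genericity of $\nu$, which is built into the choice $K=\C(\nu)$. The first step is to reduce the quasi-isomorphism assertion to a vanishing of Euler--Koszul homology. Since $R_A$ is a polynomial subring of $D_{[\![\hbar]\!]}$ over which $D_{[\![\hbar]\!]}$ is free (by the $\hbar$-PBW theorem), the Euler--Koszul functor $N\mapsto\mathcal{K}_\bullet(E^\hbar+\nu;N)$ is exact on $\Z^{n+1}$-graded $R_A$-modules; moreover, as in \cite{Matusevich-Miller-Walther,schulze2009hypergeometric}, left multiplication by $\partial_a^\hbar$ on $\mathcal{K}_\bullet(E^\hbar+\nu;S_A)$ induces on homology the same map as does multiplication by the monomial $\zeta_a\in S_A$ (the image of $\partial_a^\hbar$). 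Because $A^h$ spans $\mathbb{Q}^{n+1}$, the ring $S_A=K[\mathbb{N}A^h]$ is a domain, so $\zeta_a$ is a nonzerodivisor; feeding the short exact sequence $0\to S_A\xrightarrow{\zeta_a}S_A\to S_A/\zeta_a S_A\to 0$ (with an evident degree shift on the middle term) through the exact functor and chasing the resulting long exact sequence of $\mathcal{H}_\bullet$ shows that $\partial_a^\hbar$ is a quasi-isomorphism on $\mathcal{K}_\bullet(E^\hbar+\nu;S_A)$ if and only if $\mathcal{H}_i(E^\hbar+\nu;S_A/\zeta_a S_A)=0$ for all $i$.

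The second step is the $\hbar$-analogue of the acyclicity criteria of \cite{adolphson1994hypergeometric} and \cite{Matusevich-Miller-Walther}: for a finitely generated $\Z^{n+1}$-graded $S_A$-module $M$, the homology $\mathcal{H}_0(E^\hbar+\nu;M)$ vanishes precisely when $\nu$ avoids a certain translate (rescaled by a power of $\hbar$, in accordance with \eqref{eq:EK action}) of $\mathrm{qdeg}(M)$, and whenever $\mathcal{H}_0(E^\hbar+\nu;M)=0$ one has $\mathcal{H}_i(E^\hbar+\nu;M)=0$ for every $i$. These statements are proved one graded degree at a time, and replacing the degree $\mathbf d$ by $\hbar\mathbf d$ in the twisted Euler action leaves the underlying combinatorics untouched, so I would transcribe those proofs. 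It then remains only to check that $\nu$ lies outside the resonance set attached to $M=S_A/\zeta_a S_A$.

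The third step is where genericity is used. Since $\zeta_a$ is a nonzerodivisor, $\dim(S_A/\zeta_a S_A)=\dim S_A-1=n$, so $\mathrm{qdeg}(S_A/\zeta_a S_A)$ — hence the associated resonance set — is a finite union of proper affine subspaces of $\C^{n+1}$, each of dimension at most $n$ and each cut out by linear equations with rational coefficients (they are translates of $\C$-spans of proper sub-configurations of $A^h$, possibly dilated by $\hbar$). Membership of $\nu$ in any such subspace would force a nontrivial $\mathbb{Q}$-linear relation among $\nu_0,\dots,\nu_n$ and $\hbar$, contradicting their algebraic independence over $\C$. Hence $\mathcal{H}_i(E^\hbar+\nu;S_A/\zeta_a S_A)=0$ for all $i$ and every $a\in A$, which yields the lemma.

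I expect the main obstacle to lie in the second step. It is bookkeeping rather than conceptual, but one must verify that the homological computations of Adolphson and of Matusevich--Miller--Walther — originally carried out over a field containing the parameters — go through over the base $K[\![\hbar]\!]$, which is a discrete valuation ring rather than a field. The point making this work is that every module involved ($D_{[\![\hbar]\!]}$, $D_{[\![\hbar]\!]}\otimes_{R_A}(S_A/\zeta_a S_A)$, and the Euler--Koszul complexes) is $\hbar$-torsion-free, so that the quasi-degree computation and the implication $\mathcal{H}_0=0\Rightarrow\mathcal{H}_i=0$ survive after replacing the degree $\mathbf d$ by $\hbar\mathbf d$ in \eqref{eq:EK action}; as a consistency check, inverting $\hbar$ and applying \eqref{eq:DD scaling} identifies the complex over $K((\hbar))$ with the classical Euler--Koszul complex at the still-generic parameter $\nu/\hbar$, for which the vanishing is \cite[Theorem 3.6]{schulze2009hypergeometric}.
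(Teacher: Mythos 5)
Your proposal is correct and follows exactly the route the paper intends: the paper gives no proof of Lemma \ref{lem:non-resonance}, declaring it an $\hbar$-analogue of \cite[Theorem 4.4]{adolphson1994hypergeometric} and \cite[Theorem 3.6]{schulze2009hypergeometric}, and its remark on s-resonance ($\nu\in(1,a)+\mathrm{qdeg}(S_A/\langle\zeta_a\rangle)$) confirms that the intended argument is precisely your reduction via the sequence $0\to S_A\xrightarrow{\zeta_a}S_A\to S_A/\zeta_aS_A\to 0$ to Euler--Koszul vanishing, settled by genericity of $\nu$ over $K=\C(\nu)$. Your transcription over $K[\![\hbar]\!]$ is sound (the face-prime filtration argument survives because the relevant eigenvalue scalars $\langle c,\nu\rangle+\hbar\Z$ are units in $K[\![\hbar]\!]$), so the proof is essentially the same as the paper's, only more detailed.
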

Since the cotangent bundle $T^*Z_K$ is trivialized as $Z_K\times (Z_K)^*$ where $*$ denotes the dual vector space, the coordinate ring $\mathcal{O}_{T^*Z_K}$ is also trivialized as $\mathcal{O}_{T^*Z_K}=K[z_a,\zeta_a;a\in A]$.
We set ${\rm deg}(\zeta_a):=-(1,a)$ and ${\rm deg}(z_a):=(1,a)$ so that $\mathcal{O}_{T^*Z_K}$ is a $\Z^{n+1}$-graded module.
We identify $R_A$ as a subring of $\mathcal{O}_{T^*Z_K}$ generated by $\zeta_a$ by the correspondence $\partial_a\mapsto \zeta_a$.
We set
$E_0^0:=\sum_{a\in A}z_a\zeta_a$ and $E_i^0:=\sum_{a\in A}a_iz_a\zeta_a$ for $i=1,\dots,n$, where $a_i$ is the $i$-th coordinate of $a\in A$.
Given a $\Z^{n+1}$-graded $S_A$ module $M$, we write $\mathcal{K}_\bullet (E^0+\nu,M)$ for the Koszul complex of the operators $(E_i^0+\nu_i):\mathcal{O}_{T^*Z_K}\otimes_{R_A}M\to \mathcal{O}_{T^*Z_K}\otimes_{R_A}M$ for $i=0,\dots,n$.
The $i$-th Koszul homology of $N$ is $
\mathcal{H}_i(E^0 +\nu;N) := H_i(\mathcal{K}_\bullet(E^0 +\nu;N))$.

Let us now prove Theorem \ref{thm:hbar GKZ}.
By Lemma \ref{lem:hbar GKZ isom} and Lemma \ref{lem:non-resonance}, there is an isomorphism of $D_{[\![\hbar]\!]}$-modules $M_A^\hbar(\nu)\simeq \mathcal{H}_0(E^\hbar+\nu,S_A[\partial_a^{-1};a\in A])$.
We consider a short exact sequence 
$0\to \mathcal{K}_\bullet(E^\hbar+\nu,S_A[\partial_a^{-1};a\in A])\overset{\hbar}{\to} \mathcal{K}_\bullet(E^\hbar+\nu,S_A[\partial_a^{-1};a\in A])\to \mathcal{K}_\bullet(E^0+\nu,S_A[\partial_a^{-1};a\in A])\to 0$.
By taking its homology groups, we obtain the following exact sequence:
\begin{equation}\label{eq:no torsion for GKZ}
    \mathcal{H}_1(E^0+\nu,S_A[\partial_a^{-1};a\in A])\to M_A^\hbar(\nu)\overset{\hbar}{\to}M_A^\hbar(\nu).
\end{equation}
We claim that $\{ E^0_i+\nu\}_{i=0}^n$ is a regular sequence of $\mathcal{O}_{T^*Z_K}\otimes_RS_A[\partial_a^{-1};a\in A]$.
To see this, it is enough to show that $\mathcal{O}:=\mathcal{O}_{T^*Z_K}\otimes_RS_A[\partial_a^{-1};a\in A]/\sum_{i=0}^n\mathcal{O}_{T^*Z_K}\otimes_RS_A[\partial_a^{-1};a\in A](E_i+\nu_i)$ is a smooth $\C$-algebra of dimension $|A|-n-1$.
Let us regard $A^h$ as a $(n+1)\times |A|$ matrix, $z$ as a $|A|$-dimensional column vector, and regard $\nu$ as an $(n+1)$-dimensional column vector.
${\rm Spec}\,\mathcal{O}\subset {\rm Spec}\,\mathcal{O}_{T^*Z_K}\otimes_RS_A[\partial_a^{-1};a\in A]$ is defined by a constraint $A^h{\rm diag}(\zeta_a)_{a\in A}\cdot z=-\nu$, which is always solvable as an equation for $z$ since the matrix $A^h{\rm diag}(\zeta_a)_{a\in A}$ has rank $n+1$.
Thus, $\mathcal{O}$ is the coordinate ring of a rank $(|A|-n-1)$-vector bundle on ${\rm Spec}\,S_A[\zeta^{-1}]\simeq T_K$.
It follows that $\mathcal{H}_1(E^0+\nu,S_A[\partial_a^{-1};a\in A])=0$, hence $M_A^\hbar(\nu)$ has no $\hbar$-torsion by \eqref{eq:no torsion for GKZ}.
Now the same proof as that of Theorem \ref{thm:main result} shows that $CC(M_A^\hbar(\nu))=CC(M_A(\nu))$.
Thus, we obtain $CC(M_A^\hbar(\nu))=CC(M_A(\nu))=CC(M^{\rm hyp}_{[\![\hbar]\!]})=E^{\rm hyp}$.
Let $N$ be the kernel of the morphism $M_A^\hbar(\nu)\to M^{\rm hyp}_{[\![\hbar]\!]}$.
Since the characteristic cycle is additive, we obtain that $CC(N)=0$.
It follows that $N=0$.
\qed

A version of Theorem \ref{thm:main result} in the context of this subsection is the following corollary.
Note that the characteristic cycle of GKZ system is already studied in detail in \cite{gel1989hypergeometric} and \cite{Schulze_Walther}.

\begin{corollary}
    Let $I_0$ be the ideal of $\mathcal{O}_{T^*\C^A}$ generated by $I_A$ and $E^0_i$ for $i=0,\dots,n$.
    Then, $CC(M_A(\nu))$ is the algebraic cycle associated to ${\rm gr}(\mathcal{O}_{T^*\C^A}/I_0)$.
\end{corollary}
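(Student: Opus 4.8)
The plan is to deduce this as a straightforward consequence of Theorem~\ref{thm:main result} together with the $\hbar$-deformed comparison in Theorem~\ref{thm:hbar GKZ}, after unwinding the definitions of the various characteristic cycles. First I would observe that by Theorem~\ref{thm:hbar GKZ} we have an isomorphism $M_A^\hbar(\nu)\simeq M^{\rm hyp}_{[\![\hbar]\!]}$ of $D_{[\![\hbar]\!]}$-modules, so their characteristic cycles agree; moreover the argument given at the end of the proof of Theorem~\ref{thm:hbar GKZ} already records the chain of equalities $CC(M_A^\hbar(\nu))=CC(M_A(\nu))=CC(M^{\rm hyp}_{[\![\hbar]\!]})=E^{\rm hyp}=CC(M_0^{\rm hyp})$. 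Thus it only remains to identify $CC(M_A(\nu))$ with the algebraic cycle attached to ${\rm gr}(\mathcal{O}_{T^*\C^A}/I_0)$, i.e.\ to show that the specialization $\hbar\to 0$ of $M_A^\hbar(\nu)$ has associated graded $\mathcal{O}_{T^*\C^A}/I_0$ with its natural filtration.

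Concretely, I would run the $\hbar\to 0$ argument of Section~\ref{subsec:likelihood formula} in the GKZ setting. Set $M_A^0(\nu):=(K[\![\hbar]\!]/(\hbar))\otimes_{K[\![\hbar]\!]}M_A^\hbar(\nu)$. Since $M_A^\hbar(\nu)$ is $\hbar$-torsion free (established in the proof of Theorem~\ref{thm:hbar GKZ} via the vanishing $\mathcal{H}_1(E^0+\nu,S_A[\partial_a^{-1}])=0$), the same formal manipulations with good filtrations used to prove Theorem~\ref{thm:main result} — equip $M_A^\hbar(\nu)$ and $M_A^\hbar(\nu)_{((\hbar))}$ with compatible good filtrations satisfying the analogue of \eqref{eq:intersection}, push the filtration down to $M_A^0(\nu)$, and use flatness of $K[\![\hbar]\!]/(\hbar)$ over the relevant graded pieces — give $CC(M_A(\nu))=CC(M_A^0(\nu))$. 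Next I would identify $M_A^0(\nu)$ explicitly: reducing the generators $E_i^\hbar+\nu_i$ and the binomials of $I_A^{\,\hbar}$ modulo $\hbar$ under the isomorphism $D_{[\![\hbar]\!]}/\hbar D_{[\![\hbar]\!]}\simeq\mathcal{O}_{T^*Z_K}$ (which sends $\partial_a^\hbar\mapsto\zeta_a$), the Euler operators $E_i^\hbar$ become $E_i^0=\sum_a a_i z_a\zeta_a$ (with $a_0=1$) and the binomials of $I_A^{\,\hbar}$ become the generators of $I_A\subset K[\zeta_a]$; hence $M_A^0(\nu)\simeq\mathcal{O}_{T^*\C^A}/I_0$. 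Since $\mathcal{O}_{T^*\C^A}\simeq{\rm gr}(\mathcal{O}_{T^*\C^A})$ canonically, the characteristic cycle of $M_A^0(\nu)$ as a filtered module is the algebraic cycle of ${\rm gr}(\mathcal{O}_{T^*\C^A}/I_0)$, which is what the corollary asserts.

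The main obstacle — really the only nonroutine point — is the bookkeeping needed to guarantee that the good filtration on $M_A^\hbar(\nu)$ descends compatibly to both $M_A^0(\nu)$ and $M_A^\hbar(\nu)_{((\hbar))}$ and that ${\rm gr}$ commutes with the two specializations, so that multiplicities are preserved under $\hbar\to 0$; but this is precisely the content of the lemmas and the multiplicity proposition already proved in Section~\ref{subsec:likelihood formula}, applied verbatim with $M^{\rm hyp}_{[\![\hbar]\!]}$ replaced by $M_A^\hbar(\nu)$, so no new idea is required. One should also be mildly careful that the $\Z^{n+1}$-grading twist \eqref{eq:EK action} in the Euler operators does not survive the $\hbar\to 0$ limit (the $\deg(m)\hbar$ term drops out), so that the resulting commutative ideal is exactly $I_0$ and not a twisted variant; this is immediate from the definition. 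With these checks the corollary follows.
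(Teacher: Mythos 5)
Your overall strategy --- combine Theorem \ref{thm:hbar GKZ} with the $\hbar\to 0$ specialization argument of \S\ref{subsec:likelihood formula}, using the $\hbar$-torsion-freeness of $M_A^\hbar(\nu)$ established in the proof of Theorem \ref{thm:hbar GKZ} --- is exactly the intended route, and reducing the corollary to the computation of $CC\bigl(M_A^\hbar(\nu)/\hbar M_A^\hbar(\nu)\bigr)$ is fine. The gap is in your identification of that specialization. Reducing $H_A^\hbar(\nu)$ modulo $\hbar$ kills only terms explicitly proportional to $\hbar$ (the commutators and the $\deg(m)\hbar$ twist in \eqref{eq:EK action}); the parameters $\nu_i$ are units of $K$ and survive. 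Hence the generators $E_i^\hbar+\nu_i$ reduce to $E_i^0+\nu_i$, not to $E_i^0$, and $M_A^0(\nu)\simeq\mathcal{O}_{T^*Z_K}/\bigl(I_A+\langle E_0^0+\nu_0,\dots,E_n^0+\nu_n\rangle\bigr)$. This is the GKZ analogue of the likelihood equations: its support is cut out by $V(I_A)$ together with $A^h\,{\rm diag}(\zeta_a)\,z=-\nu$, it is not conic, and it is not isomorphic to $\mathcal{O}_{T^*\C^A}/I_0$. (If the $\nu_i$ really dropped out, the analogous limit in Theorem \ref{thm:main result} would be cut out by $\omega_0$ alone, which is visibly wrong; compare the paper's example $K[y,x,z,\zeta]/J_0\simeq K[z,\zeta]/(z\zeta+s-\nu)$, whose associated graded is $K[z,\zeta]/(z\zeta)$.)

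Consequently your final step is not ``immediate from the definition'': because the limit ideal is inhomogeneous in $\zeta$, computing the characteristic cycle requires taking ${\rm gr}$ with respect to the filtration $F_\bullet\mathcal{O}_{T^*Z_K}$, i.e.\ replacing $I_A+\langle E^0+\nu\rangle$ by its ideal of initial forms, and one must then argue that the resulting cycle agrees with that of $\mathcal{O}_{T^*\C^A}/I_0$ --- a priori the initial ideal only \emph{contains} $I_A+\langle E_0^0,\dots,E_n^0\rangle$ and could be strictly larger along some components, affecting multiplicities. That bridge is where the actual content of the corollary lies; it can be supplied, e.g., via the regular-sequence/Euler--Koszul vanishing already used in the proof of Theorem \ref{thm:hbar GKZ} (the sequence $E^0+\nu$ is regular on $\mathcal{O}_{T^*Z_K}\otimes_{R_A}S_A[\partial_a^{-1};a\in A]$), or by invoking known descriptions of ${\rm gr}(H_A(\nu))$ for generic parameters, but your proposal skips it entirely and instead asserts the wrong limit module. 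A smaller slip: $K[\![\hbar]\!]/(\hbar)$ is not flat over $K[\![\hbar]\!]$; the multiplicity comparison in \S\ref{subsec:likelihood formula} rests on $\hbar$-torsion-freeness of the composition factors (vanishing of ${\rm Tor}_1$), not on flatness.
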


\section{A hypergeometric view on Landau analysis}\label{sec:Landau analysis}

We assume $Z$ is an affine space in this section and $\mathcal{O}_{T^*Z}$ denotes the coordinate ring of $T^*Z$, which is a polynomial ring.
Let us set $I_0:=\mathcal{O}_{T^*Z}\cap J_0$.
The quotient $\mathcal{O}_{T^*Z}/I_0$ is an $\mathcal{O}_{T^*Z}$-submodule of $M^{\rm hyp}_0$.
Let $E^{\rm sub}$ denote the characteristic cycle of $\mathcal{O}_{T^*Z}/I_0$.
It is a sub-cycle of $E^{\rm hyp}$ whose support is same as the characteristic variety of $M^{\rm hyp}$.
We note that any irreducible component of the characteristic variety of $M^{\rm hyp}$ is a conic Lagrangian subvariety of $T^*Z$, hence it is a conormal bundle $T^*_Y Z$ of an irreducible closed subvariety $Y\subset Z$.
Let us write $E^{\rm hyp}=\sum_Y m_Y T^*_Y Z\ \ \ \text{and}\ \ \ E^{\rm sub}=\sum_Y m'_Y T^*_Y Z$.
Then, it follows by construction that $0<m'_Y\leq m_Y$ for any $Y$ occurring in the summation.
There are examples in which a strict inequality $m'_Y<m_Y$ holds true.
An example is $Z=\C={\rm Spec}\, \C[z]$ and $f(x;z)=z f(x)$ where $f(x)$ is a Laurent polynomial such that the absolute value of the Euler characteristic of $V_{T}(f)$ is strictly larger than one.

In many examples, however, it is possible to check whether $m_Y=m'_Y$ or not.
A way to confirm it is through Kashiwara's index formula.
For a closed subvariety $Y\subset Z$, let ${\rm Eu}_Y:Z\to\Z$ denotes the Euler obstruction of $Y$ (\cite[Definition 4.1.36]{dimca2004sheaves}).
Kashiwara's index formula applied to our setup proves an identity 
\begin{equation}\label{eq:Euler index}
\chi_z=\sum_{Y}(-1)^{{\rm codim}Y}m_Y {\rm Eu}_Y(z)    
\end{equation}
for any $z\in Z$.
Computing $\chi_z$ and ${\rm Eu}_Y(z)$ for various $z$, we can recover the multiplicity $m_Y$.
Computing the Euler obstruction is, however, a challenging problem in general.
We point out that it can be computed for an isolated singularity \cite[Example 2]{kashiwara1973index} and for an affine toric variety \cite[Theorem 4.7]{matsui2011geometric}.
An algorithm for computing Euler obstruction in terms of the maximal likelihood degree at a point is also proposed in \cite{rodriguez2020computing}.
For any $\mathfrak{p}\in {\rm Supp}_0({\rm gr}(M))$, one has an identity $V(\mathfrak{p})=T_{\pi_{T^*Z}(V(\mathfrak{p}))}^*Z$.
This is because $V(\mathfrak{p})\subset T^*Z$ is an irreducible conic Lagrangian subvariety.
Therefore, $E^{\rm hyp}$ is determined by a list $\{(\pi_{T^*Z}(V(\mathfrak{p})),m_\mathfrak{p})\}_{\mathfrak{p}\in {\rm Supp}_0({\rm gr}(M))}$.

In quantum field theory,the Euler integral \eqref{eq:Euler Integral intro} appears as the Lee-Pomeransky representation of a Feynman integral.
In this setting, $f(x;z)$ is the graph polynomial $f_G(x;z)$ associated with a Feynman graph $G$. 
The variables $x$ are called {\it Schwinger parameters}, while $z\in Z$ represent the {\it kinematic data}.
These terminologies are standard and not recalled here.
See \cite[Chapter 2]{weinzierl2022feynman} for this account.
In the Feynman diagrams below, edge labels are indicated by putting the $i$-th Schwinger parameter $x_i$ to the $i$-th internal edge.
We write $E^{\rm hyp}_G$ for the hypergeometric discriminant associated to a family of polynomials $f_G(x;z)$.

\begin{minipage}[b]{0.33\textwidth}
\centering
\begin{tikzpicture}
    \begin{feynhand}
    \vertex (a) at (0,0);
    \vertex (b) at (2,0);
    \vertex [particle] (p1) at (-1,0){$p$};
    \vertex [particle] (p2) at (3,0){$-p$};
    \propag (p1) to (a);
    \propag (p2) to (b);
    \propag [plain] (a) to [out=90, in=90, edge label=$x_1$](b);
    \propag [plain] (a) to [out=-90, in=-90](b);
    \node at (1,-0.9){$x_2$};
    \end{feynhand}
\end{tikzpicture}
\captionsetup{font=footnotesize}
\captionof{figure}{Bubble $\texttt{B}_2$}
\end{minipage}
\begin{minipage}[b]{0.33\textwidth}
\centering
\begin{tikzpicture}
    \begin{feynhand}
    \vertex (a) at (0,0);
    \vertex (b) at (2,0);
    \vertex [particle] (p1) at (-1,0){$p$};
    \vertex [particle] (p2) at (3,0){$-p$};
    \propag (p1) to (a);
    \propag (p2) to (b);
    \propag [plain] (a) to [out=90, in=90, edge label=$x_1$](b);
    \propag [plain] (a) to [out=-90, in=-90](b);
    \propag [plain] (a) to (b);
    \end{feynhand}
    \node at (1,-.2){$x_2$};
    \node at (1,-1){$x_3$};
    \end{tikzpicture}
\captionsetup{font=footnotesize}
\captionof{figure}{Sunrise $\texttt{B}_3$}
\end{minipage}
\begin{minipage}[b]{0.33\textwidth}
\centering
\begin{tikzpicture}
    \begin{feynhand}
    \vertex (a) at (0,0);
    \vertex (b) at (2,0);
    \vertex [particle] (p1) at (-1,0){$p$};
    \vertex [particle] (p2) at (3,0){$-p$};
    \propag (p1) to (a);
    \propag (p2) to (b);
    \propag [plain] (a) to [out=90, in=90, edge label=$x_1$](b);
    \propag [plain] (a) to [out=-90, in=-90](b);
    \propag [plain] (a) to [out=-45, in=-135](b);
    \propag [plain] (a) to [out=45, in=135](b);
    \end{feynhand}
    \node at (1,.2){$x_2$};
    \node at (1,-.2){$x_3$};
    \node at (1,-1){$x_4$};
    \end{tikzpicture}
\captionsetup{font=footnotesize}
\captionof{figure}{$\texttt{B}_4$}
\end{minipage}

\begin{minipage}[b]{0.23\textwidth}
\centering
 \begin{tikzpicture}
        \begin{feynhand}
        \vertex (a) at (1,1);
        \vertex (b) at (1,3);
        \vertex (c) at (3,2);
        \vertex [particle] (p1) at (0,0){$p_1$};
        \vertex [particle] (p2) at (0,4){$p_2$};
        \vertex [particle] (p3) at (4,2){$p_3$};
        \propag (p1) to (a);
        \propag (p2) to (b);
        \propag (p3) to (c);
        \propag [plain] (a) to [out=180, in=180, edge label=$x_2$](b);
        \propag [plain] (a) to [edge label'=$x_1$](b);
        \propag [plain] (b) to [edge label=$x_4$](c);
        \propag [plain] (c) to [edge label=$x_3$](a);
        \end{feynhand}
\end{tikzpicture}
\captionsetup{font=footnotesize}
\captionof{figure}{Parachute \texttt{par}}
\end{minipage}
\begin{minipage}[b]{0.23\textwidth}
\centering
\begin{tikzpicture}
\begin{feynhand}
\vertex (a) at (-1,1);
\vertex (b) at (1,1);
\vertex (c) at (1,-1);
\vertex (d) at (-1,-1);
\vertex [particle] (p1) at (-1.7,1.7){$p_1$};
\vertex [particle] (p4) at (1.7,1.7){$p_4$};
\vertex [particle] (p3) at (1.7,-1.7){$p_3$};
\vertex [particle] (p2) at (-1.7,-1.7){$p_2$};
\propag (p1) to (a);
\propag (p4) to (b);
\propag (p3) to (c);
\propag (p2) to (d);
\propag [plain] (a) to (b);
\propag [plain] (b) to (c);
\propag [plain] (c) to (d);
\propag [plain] (d) to (a);
\propag [plain] (b) to (d);
\draw[-] (a) -- (-.1,.1);
\draw[-] (c) -- (.1,-.1);
\node at (-1.3,0){$x_1$};
\node at (0,-1.2){$x_2$};
\node at (1.3,0){$x_3$};
\node at (0,1.3){$x_4$};
\node at (-.3,0.6){$x_5$};
\node at (-.6,-0.3){$x_6$};
\end{feynhand}
\end{tikzpicture}
\captionsetup{font=footnotesize}
\captionof{figure}{Non-planar box \texttt{env}}
\label{fig:env}
\end{minipage}
\begin{minipage}[b]{0.23\textwidth}
\centering
\begin{tikzpicture}
\begin{feynhand}
\vertex (a) at (-1,1);
\vertex (b) at (1,1);
\vertex (c) at (1,-1);
\vertex (d) at (-1,-1);
\vertex [particle] (p1) at (-1.7,1.7){$p_1$};
\vertex [particle] (p4) at (1.7,1.7){$p_4$};
\vertex [particle] (p3) at (1.7,-1.7){$p_3$};
\vertex [particle] (p2) at (-1.7,-1.7){$p_2$};
\propag (p1) to (a);
\propag (p4) to (b);
\propag (p3) to (c);
\propag (p2) to (d);
\propag [plain] (a) to (b);
\propag [plain] (b) to (c);
\propag [plain] (c) to (d);
\propag [plain] (d) to (a);
\propag [plain] (b) to (d);
\node at (-1.3,0){$x_1$};
\node at (0,-1.2){$x_2$};
\node at (1.3,0){$x_3$};
\node at (0,1.3){$x_4$};
\node at (-0.4,0){$x_5$};
\end{feynhand}
\end{tikzpicture}
\captionsetup{font=footnotesize}
\captionof{figure}{Slashed box \texttt{acn}}
\label{fig:acn}
\end{minipage}
\begin{minipage}[b]{0.23\textwidth}
\centering
\begin{tikzpicture}
\begin{feynhand}
\vertex (a) at (-1,1);
\vertex (b) at (1,1);
\vertex (c) at (1,-1);
\vertex (d) at (-1,-1);
\vertex [particle] (p1) at (-1.7,1.7){$p_1$};
\vertex [particle] (p4) at (1.7,1.7){$p_4$};
\vertex [particle] (p3) at (1.7,-1.7){$p_3$};
\vertex [particle] (p2) at (-1.7,-1.7){$p_2$};
\propag (p1) to (a);
\propag (p4) to (b);
\propag (p3) to (c);
\propag (p2) to (d);
\propag [plain] (a) to [out=-135, in=135, edge label'=$x_1$](d);
\propag [plain] (b) to [edge label=$x_4$](c);
\propag [plain] (c) to [edge label=$x_3$](d);
\propag [plain] (b) to [edge label'=$x_5$](a);
\propag [plain] (a) to [out=-45, in=45, edge label=$x_2$](d);
\end{feynhand}
\end{tikzpicture}
\captionsetup{font=footnotesize}
\captionof{figure}{\texttt{debox}}
\label{fig:debox}
\end{minipage}

\subsection{Banana diagrams}

A {\it banana diagram} is a class of diagrams $\texttt{B}_n$ with two external edges and $n$ internal edges.
The diagrams $\texttt{B}_2$ and $\texttt{B}_3$ are referred to as {\it bubble diagram} and {\it sunrise diagram} respectively.
The Lee-Pomeransky polynomial is given by
$$
f_{\texttt{B}_{n}}(x;z)=\left( 1-\sum_{i=1}^nm_ix_i\right)\left( \sum_{i=1}^nx_1\cdots\widehat{x_i}\cdots x_n\right)+m_0x_1\cdots x_n,\ \ z=(m_0,m_1,\dots,m_n)\in\C^{n+1}.
$$
The variable $m_0$ is the {\it external mass} which is the scalar product $p^2$ and the variables $m_1,\dots,m_n$ are {\it internal masses}.

Let us consider the bubble diagram $\texttt{B}_2$.
The computation of $E^{\rm sub}$ can be done in a standard system of symbolic computation.
For example, a code snippet computing $E^{\rm sub}$ in \texttt{Oscar.jl} (version 1.3.1, \cite{OSCAR}) is the following:

\begin{minted}[breaklines,frame=lines]{julia}
using Oscar
R, v =polynomial_ring(QQ,[["x$i" for i=1:2];"y";["m$i" for i=0:2];["ξ$i" for i=0:2]])
x=v[1:2]
y=v[3]
m0,m1,m2=v[4:6]
ξ=v[7:end]
f=(1-sum([m1;m2].*x))*(x[1]+x[2])+m0*prod(x)
s=rand(Int64,1)[1]
ν=rand(Int64,2)
J0=[[ν[i]-s*y*x[i]*derivative(f,x[i]) for i=1:2]; [ξ[i]+s*y*derivative(f,v[i+3]) for i=1:3]; 1-y*f]
I0=eliminate(ideal(J0),[x;y])
o=degrevlex(R)
W=[[0 for i=1:6];[1 for i=1:3]]
oW = weight_ordering(W, o)
G=gens(groebner_basis(I0, ordering = oW))
nu0=tropical_semiring_map(QQ)
grI0=ideal([initial(g,nu0,-W) for g in G]);
PD=primary_decomposition(grI0)
Esub=[[eliminate(i[2],ξ);div(Oscar.degree(i[1]),Oscar.degree(i[2]))] for i in PD]
\end{minted}

The last output shows that $E^{\rm sub}$ is given by a formal sum
\begin{equation}\label{eq:Esub}
3[T^*_ZZ]+[T_{V(\Delta_2)}^*Z]+[T_{V_Z(m_0)}^*Z]+[T_{V(m_1)}^*Z]+[T_{V(m_2)}^*Z].    
\end{equation}
Here, we set $\Delta_2(m_0,m_1,m_2):=m_0^2+m_{1}^{2}+m_{2}^{2}-2m_{0}m_{1}-2\,m_{1}m_2-2m_{2}m_0$.
We claim that $E^{\rm hyp}_{\texttt{B}_2}$ is given by \eqref{eq:Esub}.
In fact, it is enough to check that the coefficient of each component correctly computes the drop of the Euler characteristic in view of the formula \eqref{eq:Euler index}.
Euler characteristic of a fiber is efficiently computed by the formula \cite[Theorem 1]{huh2013maximum} using, e.g., \texttt{HomotopyContinuation.jl} (\cite{breiding2018homotopycontinuation}).
Note that at a smooth point of a variety $Y\subset Z$, the value of the Euler obstruction ${\rm Eu}_Y$ is one.

Next,we consider the sunrise diagram $\texttt{B}_3$.
The hypergeometric discriminant is given by 
$$
E^{\rm hyp}_{\texttt{B}_3}=7[T^*_ZZ]+[T_{V(\Delta_3)}^*Z]+3\sum_{i=0}^3[T_{V(m_i)}^*Z]+\sum_{0\leq i<j\leq 3}[T_{V(m_i,m_j)}^*Z].
$$
Here, $\Delta_3(m_0,m_1,m_2,m_3)$ is a polynomial $e_{1}^{4}-8\,e_{1}^{2}e_{2}+16\,e_{2}^{2}-64\,e_{4}$, where $e_i$ denotes the $i$-th elementary symmetric polynomial in $m_0,m_1,m_2,m_3$.
The polynomial $\Delta_3(m_0,m_1,m_2,m_3)$ also appeared in \cite[Example 6.2]{telen2024euler}.
It is noteworthy that there are six components $T_{V(m_i,m_j)}^*Z$ whose projection to $Z$ have codimension two.
In view of Kashiwara's index formula \eqref{eq:Euler index}, this contributes to Euler characteristics.
For example, on a generic point $z$ of $V(m_1)$, the signed Euler characteristic is 
$\chi_z=7-3Eu_{V(m_1)}(z)=4$.
On the other hand, if $z$ is a generic point of $V_Z(m_1.m_2)$, the signed Euler characteristic is 
$$
\chi_z=7-3Eu_{V(m_1)}(z)-3Eu_{V(m_2)}(z)+Eu_{V(m_1,m_2)}(z)=7-3-3+1=2.
$$
The linearity of $Z$ naturally leads to a study of the projective dual of $V(\Delta_3)$.
In fact, $\Delta_3(m_0,m_1,m_2,m_3)$ is a homogeneous polynomial in $(m_0,m_1,m_2,m_3)$ and  $V(\Delta_3)$ can be regarded as a projective variety.
Let $Z^*$ be the dual vector space of $Z=\C^{4}$.
Then, there is a canonical identification $T^*Z=Z\times Z^*$.
We regard $Z^*$ as ${\rm Spec}\,\C[\mu_0,\mu_1,\mu_2,\mu_3]$ where $(\mu_0,\mu_1,\mu_2,\mu_3)$ is the dual coordinate of $(m_0,m_1,m_2,m_3)$.
The image of $T_{V(\Delta_3)}^*Z$ under the projection $T^*Z\to Z^*$ is the dual variety of $V(\Delta_3)$.
It turns out that the dual variety is a hypersurface defined as the Zariski closure of the set defined by a reciprocally linear equation
    $$
    \frac{1}{\mu_0}+\frac{1}{\mu_1}+\frac{1}{\mu_2}+\frac{1}{\mu_3}=0.
    $$
In particular, this gives rise to a rational parametrization of $V(\Delta_3)$.
The left surface in the following figure is the dehomogenized reciprocal hyperplane
$\{ (\mu_1,\mu_2,\mu_3)\in\mathbb{R}^3\mid 1+\frac{1}{\mu_1}+\frac{1}{\mu_2}+\frac{1}{\mu_3}=0\}$ and the right surface is the dehomogenized surface $\{(m_1,m_2,m_3)\in\mathbb{R}^3\mid \Delta(1,m_1,m_2,m_3)=0\}$.

\begin{figure}
    \centering
    \begin{minipage}{.45\textwidth}
    \includegraphics[width=\linewidth]{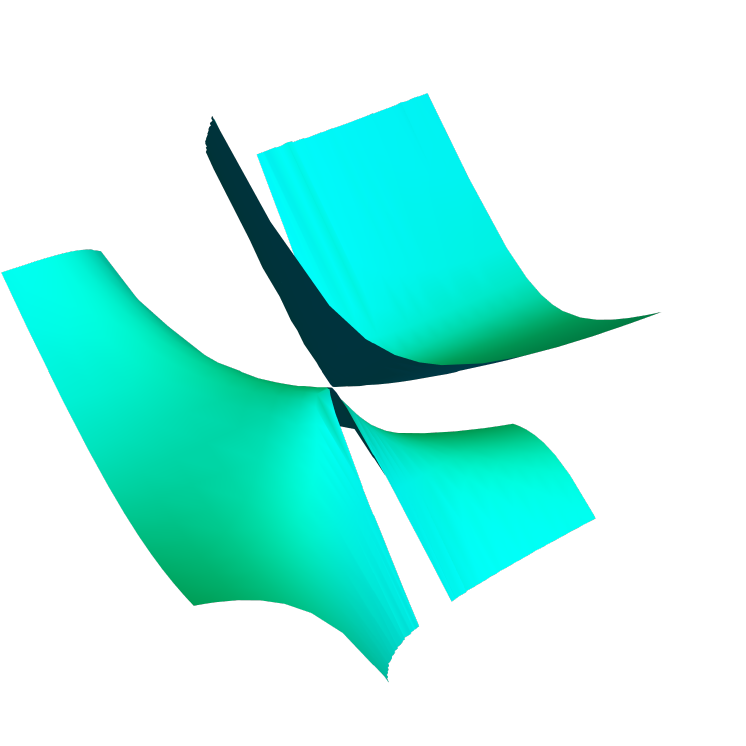}
    \caption{Reciprocal hyperplane}
    \label{fig:reciprocal}
    \end{minipage}
    \begin{minipage}{.45\textwidth}
    \includegraphics[width=\textwidth]{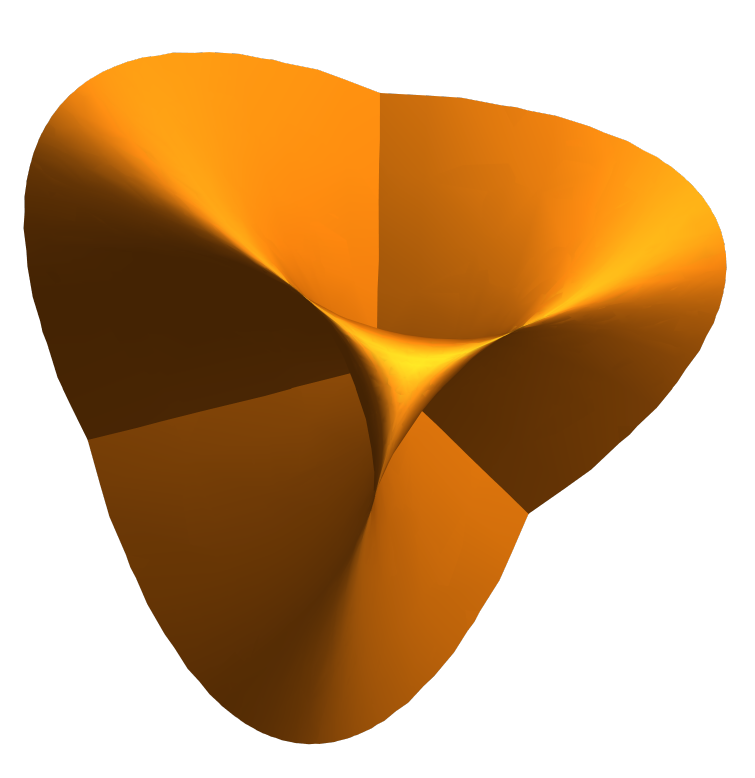}
    \caption{Dehomogenized $V(\Delta_3)$}
    \label{fig:dehomogenized}
    \end{minipage}
\end{figure}

As for $n=4$, the result is larger and harder to compute.
At least, we find that the Euler discriminant locus is $V(m_0\cdots m_4\Delta_4)$ where $\Delta_4=\Delta_4(m_0,\dots,m_4)$ is a symmetric polynomial of degree $8$.
By abuse of notation, let $e_i$ denotes the $i$-th elementary symmetric polynomial in $m_0,m_1,\dots,m_4$.
Then, we obtain
\begin{align*}
    \Delta_4(m_0,\dots,m_4)=&e_{1}^{8}-16\,e_{1}^{6}e_{2}+96\,e_{1}^{4}e_{2}^{2}-128\,e_{1}^{4}e_{4}-
      2\,048\,e_{1}^{3}e_{5}-256\,e_{1}^{2}e_{2}^{3}+1024\,e_{1}^{2}e_{2}e_{4
      }\\
    &+8192\,e_{1}e_{2}e_{5}+256\,e_{2}^{4}-2048\,e_{2}^{2}e_{4}-16\,384\,
      e_{3}e_{5}+4\,096\,e_{4}^{2}.
\end{align*}
$V(\Delta_4)$ is again projectively dual to a reciprocal hyperplane 
$$
\frac{1}{\mu_0}+\frac{1}{\mu_1}+\frac{1}{\mu_2}+\frac{1}{\mu_3}+\frac{1}{\mu_4}=0,
$$
that is, the vanishing locus of the elementary symmetric polynomial of degree four.
Hypergeometric discriminant also captures higher codimensional components.
Computing the signed Euler characteristics suggests that the following formula is true:
$$
E^{\rm hyp}_{\texttt{B}_4}=15[T^*_ZZ]+[T_{V(\Delta_4)}^*Z]+7\sum_{i=0}^4[T_{V(m_i)}^*Z]+3\sum_{0\leq i<j\leq 4}[T_{V(m_i,m_j)}^*Z]+\sum_{0\leq i<j<k\leq 4}[T_{V(m_i,m_j,m_k)}^*Z].
$$
Let $\mathbb{P}(Z)$ denote the quotient $(Z\setminus\{0\})//\C^*$ where $\C^*$ acts on $Z$ by scaling, regarding $Z$ as a $\C$-vector space.
We conjecture the following structure of $E^{\rm hyp}_{\texttt{B}_n}$:

\begin{conjecture}
The hypergeometric discriminant of $\texttt{B}_{n}$ is given by a formal sum
$$
E^{\rm hyp}_{\texttt{B}_{n}}=n_0[T^*_ZZ]+[T^*_{V(\Delta_{n})}Z]+\sum_{p=1}^{n-1}n_p\sum_{0\leq i_1<\cdots<i_p\leq n}[T^*_{V(m_{i_1},\dots,m_{i_p})}Z],
$$
where $n_p=2^{n-p}-1$.
The variety $V_{\mathbb{P}(Z)}(\Delta_{n})$ is the projective dual to the reciprocal hyperplane 
$$
\frac{1}{\mu_0}+\frac{1}{\mu_1}+\cdots+\frac{1}{\mu_n}=0.
$$
In particular, $V_{\mathbb{P}(Z)}(\Delta_{n})$ is a rational hypersurface.
A rational parametrization is given by $\mathbb{P}^{n-1}\ni [t_1:\dots:t_{n}]\mapsto \left[\left(\sum\limits_{i=1}^{n}t_i\right)^2:t_1^2:\dots:t_{n}^2\right]\in\mathbb{P}(Z)$.
Moreover, $\Delta_n$ is the Landau discriminant of the banana diagram $\texttt{B}_n$ as in \cite[Proposition 2]{mizera2022landau}.
\end{conjecture}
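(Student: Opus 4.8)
By Theorem~\ref{thm:main result} it suffices to compute $CC(M^{\rm hyp}_0)$ for $f=f_{\texttt{B}_n}$, i.e.\ the characteristic cycle attached to the likelihood ideal cut out by the one-form \eqref{eq:likelihood one-form}. Writing $f_{\texttt{B}_n}=P\cdot g$ with $P=x_1\cdots x_n$ a unit on $T^n$ and
$$
g=g_{\texttt{B}_n}(x;m)=\Big(\sum_{j=1}^n x_j^{-1}\Big)\Big(1-\sum_{i=1}^n m_ix_i\Big)+m_0,
$$
the fibre is $X_z\cong T^n\setminus V_{T^n}(g)$, so $\chi^*$ is the signed Euler characteristic — equivalently, by Huh's theorem, the maximum likelihood degree — of this smooth very affine variety for generic $z$, and (as recalled at the start of \S\ref{sec:Landau analysis}) every component of ${\rm Char}(M^{\rm hyp})$ is a conormal variety $T^*_YZ$ of an irreducible $Y\subset Z=\C^{n+1}$. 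The statement thus has three parts: (a) the list of the $Y$'s; (b) their multiplicities, including $n_0=\chi^*$; (c) the identification of the main component with the dual of the reciprocal hyperplane.

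For (a), by Corollary~\ref{cor:ED=Sing} the codimension-one $Y$'s are the components of $\nabla^\pi_\chi(Z)$, and $\chi(X_z)$ drops for three reasons. First, $\{g=0\}$ can acquire a singularity inside $T^n$; a short elimination (for $n=2$ it forces $x_2=-x_1$, whence $g\equiv m_0$) locates this locus at $V(m_0)$. Second, the Newton polytope of $g$ degenerates precisely along the coordinate hyperplanes $V(m_i)$, $i=1,\dots,n$. Third — the genuinely new phenomenon — the facial form of $g$ in the direction $\phi=(1,\dots,1)$ is
$$
g_{\mathrm{top}}=m_0-\Big(\sum_{j=1}^n x_j^{-1}\Big)\Big(\sum_{i=1}^n m_ix_i\Big),
$$
and its critical equations give $x_k^2=\big(\sum_i m_ix_i\big)\big/\big(m_k\sum_j x_j^{-1}\big)$, so at a critical point $x_k=\varepsilon_k\rho/\sqrt{m_k}$ ($\varepsilon_k\in\{\pm1\}$) with critical value $\big(\sum_i\varepsilon_i\sqrt{m_i}\big)^2$; hence $g_{\mathrm{top}}$ has a torus singularity exactly along $V(\Delta_n)$, where
$$
\Delta_n=\prod_{\varepsilon\in\{\pm1\}^n}\Big(\sqrt{m_0}-\sum_{i=1}^n\varepsilon_i\sqrt{m_i}\Big)
$$
is symmetric in $m_0,\dots,m_n$ of degree $2^{n-1}$ (so $\deg\Delta_2=2$, $\deg\Delta_3=4$, $\deg\Delta_4=8$, matching the displayed examples). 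Via \eqref{eq:7.2} and the toric-boundary analysis of Theorem~\ref{thm:3.7}, each of these loci contributes a conormal component; what must still be shown is that no other facet of ${\rm New}(g)$ contributes a further hypersurface (true for $n\le4$ by the computations above). Higher-codimension components are then pinned down by the restriction isomorphism \eqref{eq:local cohomology isom} from the proof of Theorem~\ref{thm:pure codimensionality}: restricting $M^{\rm hyp}$ to $V(m_{i_1},\dots,m_{i_p})$ yields the hypergeometric module of the banana family with the edges $i_1,\dots,i_p$ deleted, whose characteristic variety is again of the stated shape, so the only higher-codimension $Y$'s are the coordinate intersections $V(m_{i_1},\dots,m_{i_p})$. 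Establishing this exhaustiveness uniformly in $n$ — via, I expect, a deletion/contraction recursion on $\texttt{B}_n$ combined with \eqref{eq:local cohomology isom} — is the step I expect to be the main obstacle.

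For (b): $n_0=\chi^*$ is the ML degree of $T^n\setminus V(g)$, and a BKK/mixed-volume count for the critical system $(\nu_i-\nu_0)g=\nu_0\,x_i\partial_{x_i}g$ $(i=1,\dots,n)$ on $T^n$ (or a deletion/contraction recursion for banana graphs) yields $\chi^*=2^n-1$, matching $3,7,15$ for $n=2,3,4$. For $1\le p\le n-1$ put $Y=V(m_{i_1},\dots,m_{i_p})$; since $Y$ is smooth, ${\rm Eu}_Y\equiv1$ on $Y$, so Kashiwara's index formula \eqref{eq:Euler index} reads $\chi_z=\sum_{Y'\supseteq Y}(-1)^{{\rm codim}\,Y'}m_{Y'}$ at a generic $z\in Y$, the sum running over the $2^p$ coordinate subspaces between $Y$ and $Z$. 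Computing $\chi_z$ on each such stratum by Huh's formula (one expects $\chi_z=2^{n-p}$ at a generic point of a codimension-$p$ coordinate subspace, which for $n=3$ returns $\chi_z=4$ on $V(m_1)$ and $\chi_z=2$ on $V(m_1,m_2)$ as in the examples) and inverting the resulting inclusion--exclusion gives $m_Y=2^{n-p}-1$, depending only on $p$ by the symmetry of the cycle in $m_0,\dots,m_n$; this refines ``$n_p$ are natural numbers'' to $n_p=2^{n-p}-1$ and reproduces $(n_0;n_1,\dots)=(3;1)$, $(7;3,1)$, $(15;7,3,1)$ for $\texttt{B}_2,\texttt{B}_3,\texttt{B}_4$.

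Finally, for (c), write the reciprocal hyperplane as $R=V(g_R)\subset\mathbb{P}^n$, $g_R=\sum_{k=0}^n\prod_{j\ne k}\mu_j$. On $R$ one has $\sum_k\mu_k^{-1}=0$, so $\partial_{\mu_l}g_R=-\big(\prod_j\mu_j\big)\mu_l^{-2}$, and the Gauss map of $R$ is $\mu\mapsto[\mu_0^{-2}:\dots:\mu_n^{-2}]$. Parametrising $R$ by $\mu_k^{-1}=t_k$ with $t_0=-(t_1+\dots+t_n)$ identifies the projective dual $R^\vee$ with the Zariski closure of the image of
$$
\mathbb{P}^{n-1}\ni[t_1:\dots:t_n]\longmapsto\Big[\big(\textstyle\sum_{i=1}^n t_i\big)^2:t_1^2:\dots:t_n^2\Big]\in\mathbb{P}(Z),
$$
which is exactly the variety $V_{\mathbb{P}(Z)}(\Delta_n)$ produced in (a). Since a generic point of $V(\Delta_n)$ lies on exactly one of the $2^{n-1}$ branches $\sqrt{m_0}=\sum_i\varepsilon_i\sqrt{m_i}$ — equivalently, the $2^{n-1}$ values $\big(\sum_i\varepsilon_i\sqrt{m_i}\big)^2$ are the roots of $\Delta_n$ as a polynomial in $m_0$ — this parametrisation is birational, so $V_{\mathbb{P}(Z)}(\Delta_n)$ is a rational hypersurface. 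Assembling (a)--(c) gives the conjectured formula; the crux remains the exhaustiveness of the component list and the uniform-in-$n$ derivation of the multiplicities $n_p$.
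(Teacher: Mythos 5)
The statement you are addressing is stated in the paper as a \emph{conjecture}: the paper offers no proof, only the explicit computations for $\texttt{B}_2,\texttt{B}_3,\texttt{B}_4$ (Gr\"obner/elimination computations of $E^{\rm sub}$, Euler-characteristic checks via \eqref{eq:Euler index}, and the case-by-case observation that $V(\Delta_3)$, $V(\Delta_4)$ are dual to reciprocal hyperplanes). Your submission is likewise not a proof but a strategy, and you say so yourself. The parts you actually carry out are correct and in places go beyond the paper's evidence: the factorization $f_{\texttt{B}_n}=x_1\cdots x_n\cdot g$, the sign-product expression for $\Delta_n$ of degree $2^{n-1}$ matching the displayed $\Delta_2,\Delta_3,\Delta_4$, the refinement $n_p=2^{n-p}-1$ consistent with all three computed cases, and especially the general-$n$ computation that the projective dual of the reciprocal hyperplane is the closure of the image of $[t_1:\dots:t_n]\mapsto[(\sum_i t_i)^2:t_1^2:\dots:t_n^2]$ (the Gauss-map calculation $\partial_{\mu_l}g_R=-(\prod_j\mu_j)\mu_l^{-2}$ on $R$ is right). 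That last piece is a genuine uniform argument where the paper only has $n=3,4$ verifications.

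The gaps, however, are real and sit exactly where the conjecture is hard. First, exhaustiveness of the component list: your interior-singularity and facial-form analysis is a heuristic imported from the generic-coefficient (Esterov/GKZ) setting, but the whole point of this family is that the coefficients are a non-generic linear slice of $\C^{A}$, so face discriminants neither obviously exhaust nor obviously inject into the components of ${\rm Char}(M^{\rm hyp})$; making this rigorous would require running the boundary analysis of \eqref{eq:7.2} and Theorem~\ref{thm:3.7} for the specific polytope of $g$ uniformly in $n$, which you do not do. Second, your proposed control of higher-codimension components via \eqref{eq:local cohomology isom} is flawed as stated: restricting to $V(m_{i_1},\dots,m_{i_p})$ yields the \emph{massless} specialization of the same $n$-edge banana, not the banana with those edges deleted, and, more importantly, knowing the characteristic cycle of the restricted module does not by itself determine which conormals $T^*_YZ$ with ${\rm codim}\,Y\geq 2$ occur in the original module (Kashiwara-type estimates give containments in only one direction), so no recursion is actually set up. Third, the multiplicities: $n_0=2^n-1$ is asserted via an uncomputed BKK count, and the key input $\chi_z=2^{n-p}$ at a generic point of a codimension-$p$ coordinate stratum is explicitly only ``expected''; without it the inclusion--exclusion inversion of \eqref{eq:Euler index} does not yield $n_p$. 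So your proposal is a sound and useful roadmap, consistent with and partly sharper than the paper's evidence, but it does not settle the conjecture, and the identification of the $\Delta_n$-component with the dual reciprocal hyperplane remains conditional on the unproven exhaustiveness step.
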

\noindent
Note that $s,m_i$ in \cite[Proposition 2]{mizera2022landau} corresponds to $m_0,\sqrt{m_i}$ in this paper.

\subsection{Parachute diagram and non-planar box}

In this section, we consider the {\it parachute diagram} and the {\it non-planar box} diagram.
The Lee-Pomeransky polynomial $f_{\texttt{par}}(x;z)$ of the diagram \texttt{par} is given by
\begin{align*}
\left(1-\sum_{i=1}^4m_ix_i\right)\left( x_1x_2+(x_1+x_2)(x_3+x_4)\right)+ x_1x_2(M_1x_3 + M_2x_4)+M_3x_3x_4(x_1+x_2).
\end{align*}
Here, $m_1,m_2,m_3,m_4$ represent {\it internal masses} while $M_i=p_i^2$ $(i=1,2,3)$ represent {\it external masses}.
To simplify the result, we assume $m_2=m_3=M_2=0$ so that $Z={\rm Spec}\,\C[m_1,m_4,M_1,M_3]$.
We list the data $(\pi_{T^*Z}(V(\mathfrak{p})),m_\mathfrak{p})$ which determines the hypergeometric discriminant $E^{\rm hyp}_{\texttt{par}}$.
Insterad of $\pi_{T^*Z}(V(\mathfrak{p}))$, we show its defining ideal $\mathfrak{p}_{\rm proj}$:

\begin{table}[h]
    \centering
    \begin{tabular}{c|c}
      ${\rm codim}V(\mathfrak{p}_{\rm proj})$   &$(\mathfrak{p}_{\rm proj},m_\mathfrak{p})$\\
      \hline
      0   & $(\left\{0\right\},8)$\\
      \hline
      1&$(\left\{m_{4}M_{1}-m_{1}M_{3}\right\},1),(\left\{m_{1}-m_{4}-M_{1}+M_{3}\right\},1),(\left\{m_{1}-m_{4}\right\},1)$,\\
       &$(\left\{m_{1}-M_{1}\right\},1),(\left\{m_{1}\right\},4),(\left\{m_{4}+M_{1}-M_{3}\right\},1),(\left\{m_{4}-M_{3}\right\},2)$,\\
       &$(\left\{m_{4}\right\},2),(\left\{M_{1}-M_{3}\right\},2),(\left\{M_{1}\right\},2),(\left\{M_{3}\right\},2)$\\
       \hline
      2&$(\left\{m_{4}+M_{1}-M_{3},m_{1}\right\},1),(\left\{m_{4}-M_{3},m_{1}-M_{1}\right\},1),(\left\{m_{4}-M_{3},m_{1}\right\},1)$,\\
       &$(\left\{m_{1},m_{4}\right\},2),(\left\{M_{1}-M_{3},m_{1}-m_{4}\right\},1),(\left\{M_{1}-M_{3},m_{1}\right\},1)$,\\
       &$(\left\{M_{1}-M_{3},m_{4}\right\},1),(\left\{m_{1},M_{1}\right\},2),(\left\{M_{1},m_{4}-M_{3}\right\},1),(\left\{m_{1},M_{3}\right\},1)$,\\
       &$(\left\{m_{4},M_{3}\right\},2),(\left\{M_{1},M_{3}\right\},2)$\\
\hline
      3&$(\left\{M_{1}-M_{3},m_{4},m_{1}\right\},1),(\left\{M_{1},m_{4}-M_{3},m_{1}\right\},1),(\left\{m_{1},m_{4},M_{3}\right\},1)$,\\
       &$(\left\{m_{1},M_{1},M_{3}\right\},1),(\left\{m_{4},M_{1},M_{3}\right\},1)$\\
       \hline
      4&$(\left\{m_{1},m_{4},M_{1},M_{3}\right\},1)$
    \end{tabular}
\end{table}
\noindent
Higher codimensional components of $E^{\rm hyp}_{\texttt{par}}$ appear to be tricky.
Many of the intersection of codimension one components appear while some are missing such as $\{m_1- m_4,M_1\}$.

As a bigger example of a Lee-Pomeransky polynomial, let us consider a non-planar box diagram \texttt{env}.
We employed \texttt{Groebner.mpl} (\cite{maple}) to compute this and the examples in the next sections.
To be more precise, we choose all the internal and external masses to be zero.
The remaining kinematic variables are {\it Mandelstam invariants} $s=(p_1+p_2)^2=2p_1\cdot p_2$ and $t=(p_1+p_4)^2=2p_1\cdot p_4$.
The Lee-Pomeransky polynomial $f_{\texttt{env}}(x;z)$ is given by 

\begin{align*}
   &(-s - t)x_1x_2x_3x_4 + x_1x_2x_3 + x_1x_2x_4 + x_1x_2x_5 + x_1x_3x_4 + tx_1x_3x_5x_6 + x_1x_3x_5\nonumber\\
   & + x_1x_3x_6+ x_1x_4x_6+ x_1x_5x_6 + x_2x_3x_4 + x_2x_3x_6 + sx_2x_4x_5x_6 + x_2x_4x_5 + x_2x_4x_6\nonumber\\
   &+ x_2x_5x_6 + x_3x_4x_5 + x_3x_5x_6 + x_4x_5x_6 .
\end{align*}
For this example, we did not succeed in computing the primary decomposition of ${\rm gr}(I_0)$.
However, it is still possible to compute the prime decomposition of its radical $\sqrt{{\rm gr}(I_0)}$ to find all the irreducible components $V(\mathfrak{p})$ of $E^{\rm hyp}$.
By combining this with Euler characteristic computation and Kashiwara's index theorem \eqref{eq:Euler index}, we obtain
\begin{equation*}
    E^{\rm hyp}_{\texttt{env}}=10[T^*_{\C^2}\C^2]+6[T^*_{V(s)}\C^2]+6[T^*_{V(t)}\C^2]+6[T^*_{V(s+t)}\C^2]+8[T^*_{V(s,t)}\C^2].
\end{equation*}

\subsection{Slashed box}
In this section, we consider the slashed box diagram \texttt{acn}.
Landau singularity of this diagram is studied in \cite{helmer2024landau} from the viewpoint of Whitney stratification.
We take the internal masses to be zero in this diagram.
Therefore, the Lee-Pomeransky polynomial is given by
\begin{align*}
    f_{\texttt{acn}}(x;z)=&sx_2x_4x_5 + tx_1x_3x_5 + M_1x_1x_2x_4 + M_1x_1x_3x_4 + M_1x_1x_4x_5 + M_2x_1x_2x_5 + M_3x_1x_2x_3 \nonumber\\
    &+ M_3x_2x_3x_4 + M_3x_2x_3x_5 + M_4x_3x_4x_5 + x_1x_2 + x_1x_3 + x_1x_5 + x_2x_4 + x_2x_5 \nonumber\\
    &+ x_3x_4 + x_3x_5 + x_4x_5.
\end{align*}
It is still expensive to compute the Euler discriminant locus.
Therefore, we consider specializations $M_i=M_j=0$ for various $i,j$.
Due to the symmetry of the diagram, it is classified into the following three cases.

\vspace{1em}

\noindent
\underline{Case  $M_1=M_3=0$:}
In this case, we can compute the hypergeometric discriminant
\begin{equation}\label{eq:M_1=M_3=0}
    E^{\rm hyp}_{\texttt{acn}}=2[T^*_{\C^2}\C^2]+[T^*_{V(s-M_4)}\C^2]+[T^*_{V(t-M_2)}\C^2]+[T^*_{V(s+t-M_2-M_4)}\C^2]+[T^*_{V(s-M_4,t-M_2)}\C^2].
\end{equation}
In particular, the Euler discriminant locus $\nabla_\chi(Z)$ is given by the vanishing locus of $(s-M_4)(t-M_2)(s+t-M_2-M_4)$, which is identical with the Landau singularity in \cite{helmer2024landau}.
As the Euler discriminant locus is always a subset of the birfurcation locus of $\pi :X\to Z$ and the birfurcation locus is contained in the union of a Whitney stratification considered in \cite{helmer2024landau}, it is proven that the bifurcation locus is given by the vanishing locus of \eqref{eq:M_1=M_3=0}.

\vspace{1em}

\noindent
\underline{Case $M_1=M_2=0$:}
In this case, we do not succeed in computing the hypergeometric discriminant.
The primary decomposition of ${\rm gr}(I_0)$ is currently out of reach.
On the other hand, elimination algorithm is easily applied and the Euler discriminant locus $\nabla_\chi(Z)$ is given by the vanishing locus of a polynomial
\begin{align}
    M_3M_4 s t\left({M_4} -t \right) \left({M_4} -s -t \right) \left({M_3} -t \right)\Delta_2(s,M_3,M_4)\left({M_4} {M_3} -{M_3} t -{M_4} t +s t +t^{2}\right).\label{eq:M1=M2=0}
\end{align}
The result is the same as that in \cite{helmer2024landau}.
Again, it is proved that the bifurcation locus is given by the vanishing locus of \eqref{eq:M1=M2=0}.

\vspace{1em}

\noindent
\underline{Case $M_2=M_4=0$:}
This case has the same difficulty as the case of $M_1=M_2=0$.
The Euler discriminant locus $\nabla_\chi(Z)$ is given by the vanishing locus of a polynomial
\begin{equation*}
\begin{split}
  {M_1} {M_3} s t \left({M_3} -t \right) \left({M_3} -s \right) \left({M_1} -t \right) \left({M_1} -s \right) \left({M_1} +{M_3} -s -t \right)\\
    \left(4 {M_1} {M_3} -s^{2}-2 s t -t^{2}\right) \left({M_1} {M_3} -s t \right).
\end{split}
\end{equation*}
At the moment, we cannot compare this result with the codimension one component of a Whitney stratification.
We only point out that the principal Landau determinant, a candidate for the Euler discriminant locus (\cite{fevola2024principal}), misses one component.

\subsection{Rationality of the singularity}
Let us conclude this section with yet another example of a Feynman diagram called \texttt{debox}.
By choosing internal masses to be all zero, the Lee-Pomeransky polynomial $f_{\texttt{debox}}(x;z)$ is
\begin{equation*}
\begin{split}
&M_1 x_1 x_2 x_5 + M_2 x_1 x_2 x_3 + M_3 x_1 x_3 x_4 + M_3 x_2 x_3 x_4 + M_4 x_1 x_4 x_5 + M_4 x_2 x_4 x_5  \\
&+ sx_1 x_3 x_5 + sx_2 x_3 x_5 + tx_1 x_2 x_4  + x_1 x_2 + x_1 x_3 + x_1 x_4 + x_1 x_5 + x_2 x_3 + x_2 x_4 + x_2 x_5 .
\end{split}
\end{equation*}

\noindent
Let us set $M_4=0$.
The Euler discriminant locus is given by the vanishing locus of the following polynomial:
\begin{equation*}
    M_1 M_2 M_3 s  (M_3 - s) (M_1 - t) (M_1 M_3 - s t) \Delta_2(s,M_1,M_2) \Delta_2(t,M_2,M_3)\lambda(s,t,M_1,M_2,M_3).
\end{equation*}
Here, $\lambda(s,t,M_1,M_2,M_3)$ is a cubic polynomial given by
$$
M_1^2  M_3 - M_1 M_2 M_3 + M_1 M_2 s + M_1 M_3^2  - M_1 M_3 s - M_1 M_3 t - M_1 s t + M_2 M_3 t - M_2 s t - M_3 s t + s^2  t + s t^2.
$$
Since the polynomial is linear in $M_2$, its vanishing locus is manifestly rational.
Namely, the projective variety defined as the vanishing locus of $\lambda(s,t,M_1,M_2,M_3)$ is singular in view of the irrationality of a smooth cubic threefold by Clemens and Griffith (\cite{clemens1972intermediate}).
In fact, the projective singular locus has dimension two.
As far as we know, examples of Euler discriminant loci associated to Lee-Pomeransky polynomials are always rational.
It is interesting to study the rationality and a rational parametrization of an irreducible component of an Euler discriminant locus.
The case of principal $A$-determinants is well-known: each irreducible component is an $A$-discriminant locus of which a rational parametrization is known as {\it Horn-Kapranov uniformization} (\cite{HuhMLdegreeOne},\cite{kapranov1991characterization}).
We remark that one can produce an artificial example of a non-rational Euler discriminant locus.
A simple example is $f(x;z)=z_1^2(x_1+x_2+x_3)x_3+(z_2^3+1)x_1x_2$ whose Euler discriminant locus is $z_1(z_2^3+1)(z_1^2-z_2^3-1)=0$.

\section*{Acknowledgements}
The author express his sincere gratitude for Dan Bath, Claudia Fevola, Anna-Laura Sattelberger, Bernd Sturmfels, Mahrud Sairafi, Avi Steiner and Nobuki Takayama for discussions and comments.
He thanks Felix Tellander and Martin Helmer for a discussion on Whitney stratification.
Finally, he thanks Simon Telen and Maximilian Weismann for a discussion on their work \cite{telen2024euler}, which lead to the results in \S\ref{subsec:geometric description}.
This research is supported by JSPS KAKENHI Grant Number 22K13930.

\bibliographystyle{abbrv}
\bibliography{references.bib}

\vfill 

\noindent{\bf Author's address:}
\smallskip

\noindent Graduate School of Information Sciences, Division of Mathematics, Tohoku University

\noindent
{\tt saiei@tohoku.ac.jp}
\end{document}